\documentclass[12pt,reqno]{amsart}
\usepackage[T1]{fontenc}
\usepackage{amsmath,amssymb,amsthm,mathtools,mathrsfs}
\usepackage{microtype}
\usepackage[colorlinks=true,linkcolor=blue,citecolor=blue,urlcolor=blue]{hyperref}
\numberwithin{equation}{section}
\theoremstyle{plain}
\newtheorem{theorem}{Theorem}[section]
\newtheorem{lemma}[theorem]{Lemma}
\newtheorem{proposition}[theorem]{Proposition}

\newtheorem{conjecture}{Conjecture}
\theoremstyle{definition}
\newtheorem{definition}[theorem]{Definition}
\newtheorem{example}[theorem]{Example}
\newtheorem{remark}[theorem]{Remark}
\newcommand{\M}{\mathcal M}

\newcommand{\B}{\mathcal B}
\newcommand{\F}{\mathcal F}
\newcommand{\K}{\mathcal K}
\newcommand{\T}{\mathcal T}
\newcommand{\APP}{\mathrm{APP}_G}
\newcommand{\Hb}{H_{\mathrm b}}
\title[Intermediate entropies for amenable group actions]{Ergodic measures of intermediate entropies for amenable group actions with an approximate product property}
\author{Wenda Zhang}
\address{College of Mathematics and Statistics,\newline Chongqing Jiaotong University, Chongqing 400074, China}
\email{wendazhang951@aliyun.com}
\author[Xiankun Ren]{Xiankun Ren$^{*}$}
\thanks{Corresponding author: Xiankun Ren.}
\address{College of Mathematics and Statistic, Chongqing University,\newline Chongqing 401331, China}
\email{xkren@cqu.edu.cn}
\date{}
\subjclass[2020]{37A35, 37B40, 37B05}
\begin{document}

  \begin{abstract}
We study entropy realization for continuous actions of infinite countable
amenable groups. We introduce an approximate product property for amenable group actions that permits a small proportion of tracing mistakes on prescribed
pairwise disjoint, sufficiently invariant finite sets. We prove that this
property implies entropy-denseness and almost entropy-approximability of
every invariant measure. The construction combines zero-entropy exact
tilings with a finite-block estimate for the complexity of tracing mistakes.
Under asymptotic entropy expansiveness, almost entropy-approximability
upgrades to entropy-approximability. For every $0\leq\alpha<h(X,G)$,
ergodic measures of entropy $\alpha$ then form a residual subset of the
invariant measures whose entropy is at least $\alpha$. In particular,
the set of ergodic measure entropies equals $[0,h(X,G)]$.
%Our hypothesis is implied by specification. It is a prescribed-position
%version of approximate orbit tracing; we distinguish it from the classical
%approximate product property, which allows changes in the positions of orbit blocks.
\end{abstract}
\keywords{Intermediate entropy, amenable group action, approximate product property,
entropy-denseness, asymptotic entropy expansiveness}
\maketitle

% Replacement introduction for the revised manuscript.
% The integrated manuscript relocates Definition def:entropyapprox here.
% It also includes Example ex:app-not-specification in the preliminaries.
\section{Introduction}
The relation between topological entropy and the structure of invariant
measures is a central theme in ergodic theory. The variational principle
identifies topological entropy with the supremum of measure-theoretic
entropies, but it does not determine which values below this supremum are
attained by ergodic measures. This distinction is essential: strictly
ergodic systems with positive topological entropy show that positive
entropy alone need not produce a nontrivial range of ergodic measure
entropies \cite{beguin2007construction,glasner1994strictly,hahn1967entropy}.
Additional dynamical structure is therefore needed to realize all
intermediate values.

In smooth dynamics, an important source of this structure is Katok's
horseshoe approximation theorem for $C^{1+\alpha}$ surface
diffeomorphisms \cite{katok1980lyapunov}. It connects the entropy of
hyperbolic measures with compact hyperbolic sets and leads to the
realization of intermediate entropies. The corresponding question in
higher dimensions is expressed by the following conjecture. Here
$\M_e(X,f)$ denotes the ergodic invariant probability measures of $f$.

\begin{conjecture}[Katok]
If $f$ is a $C^2$ diffeomorphism of a compact Riemannian manifold $X$, then
\[
[0,h_{\mathrm{top}}(f)) \subset \{h_\mu(f):\mu\in\M_e(X,f)\}.
\]
\end{conjecture}

The inclusion in this conjecture is called the \emph{intermediate entropy
property}. In this paper we study its analogue for amenable group actions.
Throughout, $G$ is an infinite countable discrete amenable group, and
$(X,G)$ is a continuous action on a nonempty compact metric space. We
write $\M(X,G)$ and $\M_e(X,G)$ for its invariant and ergodic invariant
Borel probability measures, respectively, equipped with the weak$^*$
topology. We denote topological and measure-theoretic entropy by
$h(X,G)$ and $h_\mu(X,G)$, and put
\[
\mathcal H(X,G)=\{h_\mu(X,G):\mu\in\M_e(X,G)\}.
\]
Our orbit-tracing hypothesis is an approximate product property, denoted
by $\APP$, on prescribed pairwise disjoint sufficiently invariant finite
subsets of $G$; its precise definition is given in
Definition~\ref{APP}. Our first main result realizes the entire closed
entropy interval.

\begin{theorem}\label{mainthm1}
If $(X,G)$ has $\APP$ and is asymptotically entropy expansive, then
\[
\mathcal H(X,G)=[0,h(X,G)].
\]
\end{theorem}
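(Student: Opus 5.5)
The plan is to follow the route indicated in the abstract: from $\APP$ derive entropy-denseness and almost entropy-approximability of every invariant measure, upgrade the latter to entropy-approximability using asymptotic entropy expansiveness, and then run a Baire category argument on the set of measures of entropy at least $\alpha$.

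\textbf{Step 1: constructing ergodic measures.} First, using zero-entropy exact tilings of $G$ (Downarowicz--Huczek--Zhang), I will show that for every $\mu\in\M(X,G)$, every $\eta>0$ and every weak$^*$ neighbourhood $U$ of $\mu$, there is an ergodic $\nu\in U$ with $h_\nu(X,G)>h_\mu(X,G)-\eta$. This is almost entropy-approximability.
\begin{itemize}
\item By ergodic decomposition and convexity, it suffices to take $\mu$ a rational convex combination of ergodic measures $\mu_1,\dots,\mu_k$.
\item For each $\mu_i$, Katok's entropy formula for amenable groups gives many $(F,\delta)$-separated generic orbit segments over tiles $F$ of the tiling.
\item Assign to the tiles in a tile-class blocks of points from $\mu_i$ in the correct proportions.
\item Apply $\APP$ on these prescribed disjoint invariant tiles to obtain points tracing the concatenations.
\item Take a weak$^*$ limit of the empirical measures; an ergodic component of the resulting closed invariant set then has the required entropy.
\end{itemize}
The delicate point is that $\APP$ allows a small proportion of tracing mistakes. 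I will bound the number of possible mistake patterns on a finite block by $\exp(|F|\,H_{\mathrm b}(\varepsilon))$, where $\Hb$ is the binary entropy. This bound is the finite-block complexity estimate, and it is what costs only a small entropy loss. The zero entropy of the tiling ensures that the choice of tiling contributes no entropy.

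For entropy-denseness, I will also construct measures near $\mu$ with entropy \emph{close to} $h_\mu$. I will do this by replacing a proportion of the blocks with blocks coming from a fixed periodic-like orbit, or by using fewer separated points.

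\textbf{Step 2: upgrading with asymptotic entropy expansiveness.} Asymptotic entropy expansiveness makes $\nu\mapsto h_\nu(X,G)$ upper semicontinuous, and it gives a uniform bound on the tail entropy $h^*(\varepsilon)\to0$. This turns the $\delta$-separated counts at a fixed scale into genuine entropy lower bounds. Combined with Step~1, it yields entropy-approximability: for every $\mu$ and every $\beta\in[0,h_\mu]$, ergodic measures $\nu$ with $h_\nu=\beta$ approximate $\mu$.

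\textbf{Step 3: the Baire argument.} Fix $0\le\alpha<h(X,G)$. Let $\M_\alpha=\{\mu:h_\mu\ge\alpha\}$; it is closed by upper semicontinuity, hence a Baire space. Then:
\begin{itemize}
\item The ergodic measures form a $G_\delta$ set.
\item $\{h_\mu\le\alpha\}=\bigcap_n\{h_\mu<\alpha+1/n\}$ is a $G_\delta$ set.
\item Both sets are dense in $\M_\alpha$ by Step~2.
\end{itemize}
Hence ergodic measures of entropy exactly $\alpha$ are residual in $\M_\alpha$, and in particular nonempty. By the variational principle, $\M_\alpha\neq\emptyset$. For $\alpha=h(X,G)$, upper semicontinuity gives a measure of maximal entropy, and an ergodic component of it attains $h(X,G)$. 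Therefore $\mathcal H(X,G)=[0,h(X,G)]$.

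\textbf{Main obstacle.} The step I expect to be hardest is the mistake-complexity estimate in Step~1. This estimate must show that the freedom $\APP$ allows (mistakes on a small fraction of each tile) does not destroy separation between the traced orbits. It must do so uniformly over tiles of different shapes in the exact tiling.
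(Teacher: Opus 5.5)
Your architecture (entropy-denseness, compact subsystems, asymptotic entropy expansiveness, Baire) is the paper's, but the proposal never bounds entropy \emph{from above}, and the tools you invoke are aimed at lower bounds. Step~1 as stated is entropy-denseness, not almost entropy-approximability: it gives ergodic $\nu$ near $\mu$ with $h_\nu>h_\mu-\eta$. The density your Step~3 needs is different. For $\mu\in\M^\alpha(X,G)$ with possibly $h_\mu(X,G)\gg\alpha$, every neighbourhood of $\mu$ must contain measures with entropy in $[\alpha,\alpha+1/n)$. No lower bound yields this. A convex combination $(1-t)\mu+t\lambda$ either stays near $h_\mu$ in entropy or moves away from $\mu$, unless $\lambda$ is already a low-entropy measure near $\mu$. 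Replacing enough blocks by another orbit to lower the entropy appreciably likewise drags the empirical measures away from $\mu$.

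Step~2 does not fill the hole. A separated set at scale $\varepsilon$ already gives $h(\Lambda,G)\ge h(\Lambda,G,\varepsilon)$ with no hypothesis, so asymptotic entropy expansiveness plays no role in lower bounds. Your claim that ergodic $\nu$ with $h_\nu=\beta$ \emph{exactly} approximate $\mu$ is essentially the theorem itself. It is the output of the Baire argument, not something Steps~1--2 deliver. Your remarks about ``fewer separated points'' and about the tiling contributing no entropy point the right way, but no upper-bound argument is given.

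The missing ingredient is Proposition~\ref{mainprop1}(3) together with Proposition~\ref{prop:almost-to-entropy}. One deliberately uses only $\lceil e^{h_1|J|}\rceil$ labels per subtile. One forms $\Lambda=\overline{GZ^\#}$ with uniform empirical localization, so that \emph{every} invariant measure on $\Lambda$ is near $\mu_0$; an ergodic component of a weak$^*$ limit need not be. Then one bounds $s(\Lambda,K_m,5\varepsilon)$ by coding each window by four pieces:
\begin{enumerate}
\item the tiling pattern on $K_m\cup S_*^{-1}K_m$, with subexponentially many choices, which is exactly where $h(\T)=0$ is used;
\item the labels, at most $e^{(h_1+\beta)|K_m|}$ choices;
\item the mistake sets, at most $e^{\Hb(\delta)|K_m|}$ choices;
\item the $Q$-values on mistake and uncovered sites, at most $|Q|^{(\theta+4\kappa+\delta)|K_m|}$ choices.
\end{enumerate}
Points of $Y=GZ^\#$ with equal codes are $2\varepsilon$-close in $\rho_{K_m}$; density of $Y$ in $\Lambda$ then gives the bound at scale $5\varepsilon$. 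This is where the binary-entropy count belongs, and it must include the $\delta\log|Q|$ term for values at mistakes, which your count omits.

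Besides supplying upper semicontinuity, asymptotic entropy expansiveness enters the entropy estimate only through $h(\Lambda,G)\le h(\Lambda,G,\gamma)+h^X_{\mathrm{loc}}(\gamma,G)$. This gives $h<h(\Lambda,G)<h+\beta$, so every ergodic measure on $\Lambda$ lies near $\mu$ with entropy in a short prescribed interval. That is the density input for the Baire step, and your bookkeeping there (intersecting the ergodic $G_\delta$ with $\{h\le\alpha\}$) is fine once it is available. Conversely, the lower bound needs no mistake counting. With the Hamming-type $(\delta^*,F,\varepsilon^*)$-separated labels of Proposition~\ref{mainseppro1} and $2\delta<\delta^*/2$, two traced points with different labels always share a non-mistake separating coordinate.
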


The intermediate entropy problem has been studied through several
different approaches. Results for skew products and toral automorphisms
were obtained by Sun \cite{sun2010zeroentropy,sun2010measures,sun2012density}.
Other settings include partially hyperbolic systems
\cite{ures2012intrinsic,yang2020nonhyperbolic}, hereditary shifts
\cite{konieczny2018arcwise}, transitive systems with shadowing and an
upper semicontinuous entropy map \cite{li2018properties}, and star flows
\cite{li2020measures}. Ergodic universality provides another route to
entropy realization, as in the work of Quas and Soo and subsequent
developments \cite{quas2016ergodic,burguet2020topological,chandgotia2021borel}.
Related applications concern affine transformations of nilmanifolds and
homogeneous systems \cite{huang2021ergodic,guan2017measures}.

In Theorem \ref{mainthm1}, we require that the system has the approximate product property.  The classical approximate product property was first introduced by Pfister and
Sullivan \cite{pfister2005large}, which permits tracing with a small proportion
of errors and some freedom in the placement of successive orbit blocks. The approximate product property
 is almost the weakest one among the so-called {\itshape specification-like} properties, see \cite{kwietniak2016apanorama}
 for a survey. 
For this property, Sun \cite{sun2025ergodic} proved that asymptotic entropy
expansiveness yields much more than the existence of intermediate
entropies: ergodic measures of each prescribed entropy below topological
entropy are residual in the corresponding entropy superlevel set.
The construction of compact invariant subsystems with controlled entropy
is central to his argument. We follow this approximation approach in the
setting of amenable group actions.

For group actions, Liu, Chen and Zhou established related results under
the approximate product property for $\mathbb Z^d$-actions
\cite{liu2026ergodic} and under specification for amenable group actions
\cite{liu2026katok}. Xu and Zhang \cite{xu2026Isotropy} also have a similar result for SFTs with specification.
Passing to a general amenable group requires a
replacement for the concatenation of intervals. Quasitilings provide
large disjoint pieces on which orbit data can be prescribed, while the
zero-entropy exact tilings of Downarowicz, Huczek and Zhang
\cite{downarowicz2019tilings} control the complexity of their placement.
Our aim is to carry out this construction when tracing is allowed to
fail on a small proportion of each prescribed piece.

The property $\APP$ keeps the locations of all orbit pieces fixed and
requires tracing only on most sites of each sufficiently invariant piece.
Specification implies $\APP$ by removing small boundaries
(Proposition~\ref{prop:spec-implies-app}), and the implication is strict.
Indeed, for every infinite countable amenable group, the shift on
\[
\{0,1\}^G\times
\{b\in\{0,1\}^G:|\{g:b(g)=1\}|\leq1\}
\]
is expansive, has entropy $\log 2$, and satisfies $\APP$ but not
specification; see Example~\ref{ex:app-not-specification}. Thus the
orbit-tracing hypothesis allows examples beyond specification even at
positive entropy. The prescribed positions also distinguish $\APP$
from the classical approximate product property: an irrational circle
rotation has the classical property but fails $\APP$
(Remark~\ref{rem:classical-app}). This distinction is relevant when
comparing our hypotheses with those of \cite{sun2025ergodic}.

To describe the approximation results underlying
Theorem~\ref{mainthm1}, we introduce the following terminology, following
\cite{sun2025ergodic,liu2026katok}. For a nonempty compact invariant set
$\Lambda\subset X$, let $h(\Lambda,G,\gamma)$ denote its topological
entropy calculated at the fixed Bowen scale $\gamma$; see
Section~\ref{sec:prelim} for the precise convention.

\begin{definition}\label{def:entropyapprox}
Let $(X,G)$ be an amenable group action and $\mu\in\M(X,G)$.
\begin{enumerate}
\item We say that $\mu$ is \emph{almost entropy-approximable} if for every
neighborhood $U$ of $\mu$, every $h\in(0,h_\mu(X,G))$, and every
$\varepsilon,\beta>0$, there exist a nonempty compact $G$-invariant set
$\Lambda\subset X$ and $\gamma\in(0,\varepsilon)$ such that
\[
\M(\Lambda,G)\subset U,
\qquad h(\Lambda,G)>h,
\qquad h(\Lambda,G,\gamma)<h+\beta.
\]
\item We say that $\mu$ is \emph{entropy-approximable} if for every
neighborhood $U$ of $\mu$, every $h\in(0,h_\mu(X,G))$ and every $\beta>0$
there exists a nonempty compact $G$-invariant set $\Lambda$ such that
\[
\M(\Lambda,G)\subset U,
\qquad h<h(\Lambda,G)<h+\beta.
\]
\item The system is \emph{entropy-generic} if the following holds for
each $0\leq\alpha<h(X,G)$:
\[
\M_e(X,G,\alpha):=\{\nu\in\M_e(X,G):h_\nu(X,G)=\alpha\}
\]
is residual in
\[
\M^\alpha(X,G):=\{\nu\in\M(X,G):h_\nu(X,G)\ge\alpha\}.
\]
\end{enumerate}
If $h_\mu(X,G)=0$, the first two notions mean that for every neighborhood $U$ of $\mu$ there exists a nonempty compact $G$-invariant set $\Lambda$ with $\mathcal M(\Lambda,G)\subset U$.
\end{definition}

Almost entropy-approximability controls the statistical behavior of
\emph{every} invariant measure supported on the approximating set, while
bounding its entropy at a suitably small scale. Entropy-approximability
provides the corresponding bound for full topological entropy. Keeping
these two levels of control separate allows us to isolate the roles of
orbit tracing and asymptotic entropy expansiveness.

\begin{theorem}\label{mainthm2}
If $(X,G)$ has $\APP$, then every invariant measure is almost
entropy-approximable.
\end{theorem}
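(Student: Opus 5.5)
Fix $\mu\in\M(X,G)$, a weak$^*$ neighborhood $U$ of $\mu$, $h\in(0,h_\mu(X,G))$ and $\varepsilon,\beta>0$. We will not build $\Lambda$ as a single periodic orbit. Instead we take $\Lambda$ to be the orbit closure of a family of points obtained by gluing, via $\APP$, finitely many "good" orbit segments onto the tiles of a zero-entropy exact tiling of $G$.

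\textbf{Step 1: finite data.} First we reduce to a finite convex combination of ergodic measures: using the ergodic decomposition and the affinity of entropy, choose $\mu'=\sum_i c_i\mu_i\in U'$ with $U'\Subset U$, $\mu_i$ ergodic and $h_{\mu'}>h$ (rational weights $c_i$). We then shrink the entropy to just above $h$ by convexly mixing with a measure of small entropy near $\mu$; if needed, we use $\mu$ itself together with a Dirac-type periodic piece supplied by $\APP$ to lower the entropy of the blocks. By the Ornstein--Weiss/Shannon--McMillan--Breiman theorem together with Katok's entropy formula for amenable groups, for each sufficiently invariant F\o lner set $F$ we obtain an $(F,\delta)$-separated family $E_i(F)$ of points. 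Each point has empirical measure along $F$ close to $\mu_i$, and the families satisfy $|E_i(F)|\approx e^{|F|(h_i)}$, where the $h_i$ are adjusted so that $\sum c_ih_i\in(h,h+\beta/3)$. To cap the count from above we simply discard points.

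\textbf{Step 2: placement and gluing.} Next we take an exact tiling $\T$ of $G$ with tiles $F_1,\dots,F_m$ that are very invariant. By Downarowicz--Huczek--Zhang, we may take a tiling system of zero topological entropy. Inside each tile we prescribe pairwise disjoint, sufficiently invariant subsets carrying weights $c_i$, and on them we put orbit pieces from $E_i$. $\APP$ produces a point tracing each such configuration within $\delta/4$ except on a proportion $\le\eta$ of each piece. We let $\Lambda$ be the closure of the $G$-orbits of all points so produced, ranging over all tilings in the zero-entropy tiling system and all choices. Because every point of $\Lambda$ has, on most of every large F\o lner set, empirical statistics close to $\sum c_i\mu_i$, every invariant measure on $\Lambda$ is close to $\mu'$; this gives $\M(\Lambda,G)\subset U$. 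The lower bound $h(\Lambda,G)>h$ follows because distinct choices remain $(\cdot,\delta/2)$-separated on the traced sites. The number of choices is at least $\prod e^{|F|c_ih_i(1-\eta)}$, and the tiling has zero entropy.

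\textbf{Step 3: upper bound at scale $\gamma$ (main obstacle).} Choose $\gamma<\min(\varepsilon,\delta/8)$. We must count $(F,\gamma)$-spanning sets for $\Lambda$ on a large F\o lner set $F$. The contributions split into three factors. First, the tiling pattern on $F$ contributes $e^{o(|F|)}$ by zero entropy. Second, the choice of blocks contributes $\le e^{|F|(\sum c_ih_i+\beta/6)}$. Third, and this is the hard part, there are the mistakes. On the $\eta$-proportion of untraced sites the point is arbitrary, and at scale $\gamma$ this could cost $e^{\eta|F|\log N(\gamma)}$ in the worst case. In addition, the location of the mistake set costs $\binom{|F|}{\eta|F|}\le e^{|F|H(\eta)}$. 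The finite-block estimate is meant to handle this. We partition $F$ into tiles; on the error set we cover $X$ by a fixed $\gamma$-cover of size $N(\gamma)$ at each site, which gives the bound $e^{\eta|F|\log N(\gamma)+|F|H(\eta)+o(|F|)}$. The key point is the order of choices: $\gamma$ and $N(\gamma)$ are fixed first, and only then is $\eta$ chosen, small enough that $\eta\log N(\gamma)+H(\eta)<\beta/6$. $\APP$ permits this, since it supplies tracing with arbitrarily small error proportion $\eta$ for any fixed tracing scale, at the cost of requiring more invariant pieces. This yields $h(\Lambda,G,\gamma)<h+\beta$. The delicate part will be to make the error-site count uniform over all points of the orbit closure, not just the constructed points. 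We will handle this via upper semicontinuity of the counting over finite windows: the error-pattern constraint is a closed condition on finite blocks, so it passes to limits. The zero-entropy case $h_\mu=0$ uses only Step 2.
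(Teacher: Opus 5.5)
Your architecture is the right one: a zero-entropy exact tiling, labels glued by $\APP$, and a code made of tiling pattern, labels, mistake sets and cover values, with the mistake proportion chosen after the cover. However, two steps fail as written.

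\textbf{Label separation.} The label families you extract from Katok's formula are only Bowen $(F,\delta)$-separated, so two distinct labels may be $\delta$-apart at a single coordinate. $\APP$ lets each tracing point miss an $\eta$-proportion of every piece. Nothing prevents that one coordinate from being a mistake site of one of the two tracing points. It could also be removed when the quasitiles are trimmed to the pairwise disjoint, admissible pieces on which $\APP$ is applied. Hence the claim that distinct choices stay separated on the traced sites is unsupported, and with it $h(\Lambda,G)>h$. You need labels separated on a positive proportion of coordinates, i.e.\ $(\delta^*,F,\varepsilon^*)$-separated sets as in Proposition~\ref{mainseppro1}; these survive trimming by Lemma~\ref{eslemma1}. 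You also need the extra constraint $2\eta<\delta^*/2$ on the mistake proportion, so that the at most $2\eta|J|$ combined mistake sites cannot exhaust the $(\delta^*/2)|J|$ separating coordinates. This constraint must enter your order of choices alongside $\eta\log N+\Hb(\eta)<\beta/6$.

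\textbf{Scales in Step 3.} The scales are inverted. You trace at $\delta/4$ but count at $\gamma<\delta/8$. Two points with identical codes are then only known to be within $\delta/2$ of each other on the traced sites, which is far more than $\gamma$. So your three-factor count says nothing about $(F,\gamma)$-spanning numbers. The tracing scale $\varepsilon_{\mathrm{tr}}$ must lie below the counting scale. The paper's order is:
\begin{enumerate}
\item take $\varepsilon_{\mathrm{tr}}<\min\{\varepsilon^*/20,\varepsilon/5\}$, where $\varepsilon$ is your prescribed scale bound;
\item take the cover $Q$ at scale $\varepsilon_{\mathrm{tr}}/4$;
\item choose $\eta$ with $\Hb(\eta)+\eta\log\#Q$ a small fraction of $\beta$ and $\eta<\delta^*/10$;
\item take the $\APP$ data;
\item only then choose the quasitiling parameters $\kappa,\theta$.
\end{enumerate}
Equal codes then give Bowen distance at most $2\varepsilon_{\mathrm{tr}}$. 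The upper bound is taken at $\gamma=5\varepsilon_{\mathrm{tr}}$, and the lower bound comes from separation at scale $\varepsilon^*-2\varepsilon_{\mathrm{tr}}$.

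Your count also omits the sites of the window not covered by good pieces: tiles crossing its boundary, and trimmed or uncolored remainders. These are as uncontrolled as mistake sites and must be paid for with the cover alphabet, via $(\theta+4\kappa)\log\#Q<\beta$. That is why $\kappa,\theta$ come last.

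\textbf{Comparison of routes.} With these repairs, your direct construction for $\sum_ic_i\mu_i$, coloring pieces by weight as in the appendix, would work. It differs from the paper only in organization: the paper first uses entropy-denseness to replace $\mu$ by a nearby ergodic $\mu_0$, then runs the single-measure construction of Proposition~\ref{mainprop1}. The entropy-lowering device in your Step 1 is unnecessary, since capping the label sets suffices, and $\APP$ supplies no periodic points anyway. Your closedness argument for passing to the orbit closure is fine. The paper instead moves a $(K_m,5\varepsilon_{\mathrm{tr}})$-separated subset of $\Lambda$ into $GZ^\#$ as a $(K_m,3\varepsilon_{\mathrm{tr}})$-separated set.
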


No expansiveness assumption is needed in this theorem. The main
construction first treats ergodic measures, using separated families of
orbit blocks whose empirical measures approximate the given measure.
Entropy-denseness under $\APP$ then gives the result for arbitrary
invariant measures. Two quantitative issues arise in assembling these
blocks. The pieces obtained by trimming quasitiles must still satisfy the
invariance threshold required by $\APP$, and the permitted tracing
mistakes must not create an uncontrolled amount of entropy. We address
the latter by recording the locations and values of the mistakes in a
finite alphabet $Q$. For a mistake proportion $0<\delta<1/2$, the
additional exponential rate is bounded by
\[
H_{\mathrm b}(\delta)+\delta\log|Q|,
\qquad
H_{\mathrm b}(\delta)=-\delta\log\delta-(1-\delta)\log(1-\delta).
\]
After the alphabet is fixed, this bound tends to zero as $\delta\to0$.
Together with the zero entropy of the tiling system, it gives the upper
entropy estimate needed for almost entropy-approximability.

Asymptotic entropy expansiveness makes the entropy hidden below small
Bowen scales uniformly negligible. It therefore upgrades the preceding
approximation to full entropy control. It also supplies upper
semicontinuity of the entropy map, which is used in the category argument
for the following stronger conclusion.

\begin{theorem}\label{mainthm3}
If $(X,G)$ has $\APP$ and is asymptotically entropy expansive, then:
\begin{enumerate}
\item Every invariant measure is entropy-approximable.
\item The system $(X,G)$ is entropy-generic. Equivalently, for every
$0\leq\alpha<h(X,G)$, the set $\M_e(X,G,\alpha)$ is residual in
$\M^\alpha(X,G)$.
\end{enumerate}
\end{theorem}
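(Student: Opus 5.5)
Part (1) will be deduced from Theorem~\ref{mainthm2} together with asymptotic entropy expansiveness. Fix $\mu$, a neighborhood $U$, $h\in(0,h_\mu(X,G))$ and $\beta>0$. Asymptotic entropy expansiveness gives $\varepsilon>0$ such that the tail entropy $h^*(X,G,\varepsilon)<\beta/2$. For every compact invariant $\Lambda$ and every $\gamma<\varepsilon$ this yields
\[
h(\Lambda,G)\le h(\Lambda,G,\gamma)+h^*(\Lambda,G,\gamma)\le h(\Lambda,G,\gamma)+\beta/2,
\]
using the amenable-group version of Bowen's inequality, which I take from the preliminaries. Apply Theorem~\ref{mainthm2} with $\varepsilon$ and $\beta/2$. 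It produces $\Lambda$ with $\M(\Lambda,G)\subset U$, $h(\Lambda,G)>h$ and $h(\Lambda,G,\gamma)<h+\beta/2$. Hence $h<h(\Lambda,G)<h+\beta$. The case $h_\mu=0$ is verbatim the same as in Theorem~\ref{mainthm2}.

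For part (2), fix $0\le\alpha<h(X,G)$. Upper semicontinuity of $\mu\mapsto h_\mu$, which holds under asymptotic entropy expansiveness, shows that $\M^\alpha(X,G)$ is closed, hence a Baire space. I will write
\[
\M_e(X,G,\alpha)=\M_e(X,G)\cap\M^\alpha(X,G)\cap\bigcap_n\{\nu:h_\nu<\alpha+1/n\}.
\]
Upper semicontinuity makes each set $\{h_\nu<\alpha+1/n\}$ open. The ergodic measures form a $G_\delta$ set in $\M(X,G)$, by the standard argument with a countable dense family of continuous functions and the mean ergodic theorem along a tempered F{\o}lner sequence. Thus $\M_e(X,G,\alpha)$ is a $G_\delta$ in $\M^\alpha$, and it remains to prove density.

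For density, let $\mu\in\M^\alpha$ and let $U$ be a neighborhood of $\mu$. If $h_\mu>\alpha$, apply part (1) with $h=\alpha$ and a small $\beta$ to get $\Lambda$ with $\alpha<h(\Lambda,G)<\alpha+\beta$ and $\M(\Lambda,G)\subset U$. The restricted system $(\Lambda,G)$ is still asymptotically entropy expansive, so its entropy map is upper semicontinuous and an ergodic measure of maximal entropy exists, via the ergodic decomposition and affinity of entropy. This gives an ergodic $\nu\in U$ with $h_\nu\in(\alpha,\alpha+\beta)$. Doing this for every $\beta=1/n$ shows density of the sets $\M_e\cap\{\alpha\le h_\nu<\alpha+1/n\}$. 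To hit exactly $\alpha$, I will use the Baire intersection instead: each set
\[
O_n=\{\nu\in\M^\alpha:h_\nu<\alpha+1/n\}
\]
is open in $\M^\alpha$ and dense by the previous step, and $\M_e\cap\M^\alpha$ is a dense $G_\delta$. Then $\M_e\cap\bigcap_n O_n=\M_e(X,G,\alpha)$ is residual.

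The case $h_\mu=\alpha$ reduces to the previous one by perturbation. Since $\alpha<h(X,G)$, there is a measure $\omega$ with $h_\omega>\alpha$, and $(1-t)\mu+t\omega$ lies in $U$, belongs to $\M^\alpha$, and has entropy $>\alpha$ for small $t$, by affinity. Density of $\M_e$ in $\M^\alpha$ is covered by the same construction.

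The main obstacle is the existence of ergodic measures of entropy in $(\alpha,\alpha+\beta)$ inside $U$. This needs that $(\Lambda,G)$ attains its entropy at an ergodic measure, which relies on upper semicontinuity passing to subsystems. I would check that asymptotic entropy expansiveness is inherited by closed invariant subsets, which is immediate since tail entropy can only decrease. A further subtlety is the case $\alpha=0$: there density of ergodic measures of small entropy near any $\mu$ uses part (1) with $h$ arbitrarily small, or the $h_\mu=0$ clause of Definition~\ref{def:entropyapprox}.
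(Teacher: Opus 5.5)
Your argument is correct. Part (1) follows the paper's route exactly: Theorem~\ref{mainthm2}, then the upgrading step of Proposition~\ref{prop:almost-to-entropy}. Your tail entropy plays the role of $h^X_{\mathrm{loc}}(\gamma,G)$ in Proposition~\ref{asympexpansive2}, together with its monotonicity in the scale and its restriction to subsystems.

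In part (2) the $G_\delta$/Baire framework is also the paper's, but the density step is genuinely different. The paper (Proposition~\ref{prop:entropy-generic-abstract}) perturbs $\mu$ to $\mu'=(1-t)\mu+t\mu_X$. It then invokes Theorem~\ref{continuitythm} (entropy-denseness plus upper semicontinuity) to get ergodic measures near $\mu'$ with entropy near $h_{\mu'}$.
\begin{itemize}
\item The paper's route never uses entropy-approximability, even though that is the stated hypothesis of the proposition.
\item It only produces entropies in $[\alpha,\alpha')$ when $h_{\mu'}\in(\alpha,\alpha')$. If $h_\mu>\alpha'$, small $t$ keeps $h_{\mu'}$ close to $h_\mu$, so the paper's claim $\alpha<h_{\mu'}<\alpha'$ fails there.
\end{itemize}
Your route takes $\Lambda$ from part (1) with $\M(\Lambda,G)\subset U$ and $\alpha<h(\Lambda,G)<\alpha+\beta$, then an ergodic measure on $\Lambda$. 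This handles high-entropy $\mu$ directly, which is exactly the job entropy-approximability is meant to do. The perturbation is then needed only when $h_\mu=\alpha$, as you arrange. Your treatment of $\alpha=0$ is also fine. Take $h>0$ small in part (1). Alternatively, if $h_\mu=0$, the zero-entropy clause combined with upper semicontinuity at $\mu$ bounds the entropy of every measure on $\Lambda$.

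The step you call the main obstacle can be bypassed. The variational principle for $(\Lambda,G)$ already gives an ergodic $\nu$ with
\[
\alpha<h_\nu(X,G)\leq h(\Lambda,G)<\alpha+\beta .
\]
So you need neither an ergodic measure of maximal entropy on $\Lambda$ nor inheritance of upper semicontinuity, although that inheritance does hold (cf.\ Remark~\ref{rem:local-restriction}).
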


Entropy-genericity is a local statement about the space of invariant
measures. In particular, if $\alpha<h(X,G)$ and
$h_\mu(X,G)\geq\alpha$, then every neighborhood of $\mu$ contains an
ergodic measure of entropy exactly $\alpha$. Consequently, whenever
$h_\mu(X,G)<h(X,G)$, every value in $[0,h_\mu(X,G)]$ can be realized
by an ergodic measure arbitrarily close to $\mu$. The upper endpoint
$h(X,G)$ in Theorem~\ref{mainthm1} follows separately from upper
semicontinuity and ergodic decomposition, which give an ergodic measure
of maximal entropy. Its existence does not assert density of maximizing
ergodic measures near every maximizing invariant measure. If
$h(X,G)=0$, the interval in the definition of entropy-genericity is
empty; nevertheless, entropy-denseness implies that the ergodic
measures, all of entropy zero, are residual in $\M(X,G)$.

The paper is organized as follows. Section~\ref{sec:prelim} introduces
the conventions for amenability, orbit tracing and entropy, and explains
the relation between $\APP$ and specification.
Section~\ref{sec:almost} constructs the approximating invariant sets
and proves Theorem~\ref{mainthm2}. Section~\ref{sec:exact} establishes
entropy-approximability and entropy-genericity under asymptotic entropy
expansiveness, and deduces Theorem~\ref{mainthm1}.
Appendix~\ref{sec:appendix} proves entropy-denseness under $\APP$.

\section{Preliminaries}\label{sec:prelim}
%All logarithms are natural. 
We will write the action on the left, so that
$g(hx)=(gh)x$. Tiles and observation windows are translated on the right.

\subsection{Amenability, boundaries and tilings}
Let $\mathcal{F}(G)$ denote the set of nonempty finite subsets of $G$, and let $e_G$ denote the identity of $G$. A sequence
$(F_n)$ in $\F(G)$ is a \emph{left F\o lner sequence} if
\[
\frac{|gF_n\triangle F_n|}{|F_n|}\longrightarrow0
\quad\text{for every }g\in G.
\]
A countable group is amenable if it admits such a sequence. A F\o lner
sequence is \emph{tempered} if, for some $C<\infty$,
\[
\left|\bigcup_{j<n}F_j^{-1}F_n\right|\leq C|F_n|\quad(n\geq1).
\]
Every F\o lner sequence has a tempered subsequence
\cite[Proposition 1.4]{lindenstrauss2001pointwise}.

For $K,F\in\F(G)$, define
\[
\begin{aligned}
\operatorname{Int}_K F&=\{c:Kc\subset F\},\\
\operatorname{Cl}_K F&=\{c:Kc\cap F\ne\varnothing\},\\
\partial_K F&=\operatorname{Cl}_K F\setminus\operatorname{Int}_K F.
\end{aligned}
\]
We say $F\text{ is }(K,\eta)\text{-invariant}$ if $|\partial_KF|<\eta|F|$.
For fixed $K$, $|\partial_KF_n|/|F_n|\to0$ along every F\o lner
sequence \cite[Lemma 9.4.8]{coornaert2015topological}. In addition,
\begin{equation}\label{eq:right-boundary}
\partial_K(Fc)=(\partial_KF)c.
\end{equation}
Thus right translation preserves our invariance condition. The alternative
convention using $|KF\triangle F|/|F|$ is equivalent after changing the
finite test set and tolerance, but the same numerical tolerance must not
be used interchangeably in the two conventions.

A finite family $(A_j)$ is $\kappa$-disjoint if it admits pairwise disjoint
subsets $B_j\subset A_j$ with $|B_j|\geq(1-\kappa)|A_j|$.
We say that $T_1,\dots,T_t$ \emph{$\kappa$-quasitile} a finite set $S$ if
there are finite center sets $C_i$ such that $T_iC_i\subset S$, the family
$\{T_ic:c\in C_i\}$ is $\kappa$-disjoint for each $i$, the unions
$T_iC_i$ are disjoint for distinct $i$, and
\[
\left|\bigcup_{i=1}^tT_iC_i\right|\geq(1-\kappa)|S|.
\]
Empty center sets are allowed.

Next we claim the filling lemma.
\begin{proposition}[{\cite[Lemma 9.4.14]{coornaert2015topological}}]
\label{oldtiling1}
Let $0<\kappa\leq1/2$. There exists an integer $t_0=t_0(\kappa)\geq1$
such that, for every $t\geq t_0$, the following holds.
If nonempty finite sets $T_1,\ldots,T_t,S\subset G$ satisfy
\[
\begin{aligned}
\frac{|\partial_{T_j}T_i|}{|T_i|}&\leq\kappa^{2t}
&& (1\leq j<i\leq t),\\
\frac{|\partial_{T_j}S|}{|S|}&\leq\kappa^{2t}
&& (1\leq j\leq t),
\end{aligned}
\]
then  $S$  can be $\kappa$-quasitiled by$T_1,\ldots,T_t$ .
\end{proposition}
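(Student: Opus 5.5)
This is the Ornstein--Weiss filling lemma, and I would prove it by a greedy tiling from the largest tile down to the smallest. Set $\theta=\kappa^{2t}$. We place tiles in the order $T_t,T_{t-1},\dots,T_1$. At stage $i$ we choose a maximal center set $C_i$ with three properties: $T_iC_i\subset S\setminus\bigcup_{j>i}T_jC_j$ holds after trimming, the family $\{T_ic\}_{c\in C_i}$ is $\kappa$-disjoint, and each new translate meets the uncovered part of $S$ in at least a $(1-\kappa)$-proportion of its size. To arrange the disjointness of the unions $T_iC_i$ for distinct $i$, as the definition requires, I will replace $T_i c$ by its trimmed part $B$. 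This is harmless, because the quasitiling definition only asks for $\kappa$-disjointness within each family. Alternatively, I would follow Coornaert's bookkeeping and require $T_ic$ to be disjoint from the earlier unions, while measuring $\kappa$-disjointness within level $i$.

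The key step is a density increment. Let $A_i=\bigcup_{j\ge i}T_jC_j$ and suppose $|S\setminus A_{i+1}|>\kappa|S|$. I claim that maximality at stage $i$ forces $|A_i|$ to grow by roughly $\kappa(1-\kappa)$ times the uncovered mass, up to error terms controlled by $\theta$.

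To see this, I would use a double-counting argument. Maximality means every $c\in\operatorname{Int}_{T_i}S$ has $T_ic\cap(S\setminus A_i)$ of size less than $(1-\kappa)|T_i|$, so each such translate is at least $\kappa$-covered by $A_i$. Averaging over $c\in \operatorname{Int}_{T_i}S$ gives
\[
\sum_c |T_ic\cap A_i| = |T_i|\,|A_i|-\text{(boundary terms)}.
\]
The boundary terms are bounded by $|\partial_{T_i}S|\cdot|T_i|\le\theta|S||T_i|$.

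There is one further error: the earlier larger tiles $T_j$ with $j>i$ are not $T_i$-invariant in the needed direction. The relevant hypothesis is $|\partial_{T_i}T_j|\le\theta|T_j|$ for $i<j$, and it controls the part of $A_{i+1}$ near its tile boundaries. These estimates give $|A_i|\ge\kappa|S|-O(\theta|S|)$ at the first stage.

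At subsequent stages the covered fraction of the uncovered part increases by a factor, and the uncovered proportion satisfies roughly
\[
1-|A_i|/|S|\le(1-\kappa+O(\theta))\bigl(1-|A_{i+1}|/|S|\bigr),
\]
which can also be phrased as $\le(1-\kappa/2)(\ldots)$. After $t$ stages the uncovered proportion is at most $(1-\kappa/2)^t<\kappa$ once $t\ge t_0(\kappa)$. If the uncovered proportion drops below $\kappa$ earlier, we stop and leave the remaining $C_i$ empty, which the definition allows.

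The main obstacle is the careful accounting in the increment step. We must show the errors are $O(\theta\cdot(\text{number of tile types}))$, and that uncovered translates still have center in $\operatorname{Int}_{T_i}S$ relative to the trimmed region. The power $\kappa^{2t}$ is chosen so that even the accumulated errors over $t$ levels, each amplified by factors like $\kappa^{-1}$ when normalizing by uncovered mass of size $\ge\kappa|S|$, stay below $\kappa^2$. I would verify this by an explicit inequality $\sum_{j}\theta\kappa^{-1}\le t\kappa^{2t-1}\ll\kappa$, and then invoke the reference \cite[Lemma 9.4.14]{coornaert2015topological} for the routine verification in our boundary convention $\partial_K$, using \eqref{eq:right-boundary} to transport invariance to translates.
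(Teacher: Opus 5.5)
The paper gives no proof here. It simply imports \cite[Lemma 9.4.14]{coornaert2015topological}, and the greedy largest-to-smallest filling you describe is the argument behind that lemma, so your route is the right one. Two points in your sketch do not work as written. First, the trimming variant does not produce a quasitiling in the paper's sense. The tiles are full translates $T_ic$, and besides $\kappa$-disjointness within each level the definition requires the unions $T_iC_i$ to be pairwise disjoint for distinct $i$. Replacing $T_ic$ by a trimmed piece $B$ leaves $T_iC_i$ unchanged, so calling this harmless is a misreading. You must use the other bookkeeping. With $R_{i+1}=S\setminus A_{i+1}$, take centers $c\in\operatorname{Int}_{T_i}R_{i+1}$, adding them one at a time as long as the new $T_ic$ meets the level-$i$ tiles already chosen in at most $\kappa|T_i|$ points.

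Second, and more seriously, the double counting you display cannot give the increment. Averaging over $c\in\operatorname{Int}_{T_i}S$ against the whole covered set $A_i$ yields only $|A_i|\ge\kappa(1-\theta)|S|$, which $A_{i+1}$ may already satisfy, so it forces no growth. You assert the multiplicative step without deriving it. One has to average over the interior of the remaining set and count only the new level. Maximality gives
\[
\kappa|T_i|\,|\operatorname{Int}_{T_i}R_{i+1}|
\le\sum_{c\in\operatorname{Int}_{T_i}R_{i+1}}|T_ic\cap T_iC_i|
\le|T_i|\,|T_iC_i|,
\]
hence $|T_iC_i|\ge\kappa\bigl(|R_{i+1}|-|\partial_{T_i}R_{i+1}|\bigr)$, using $|\operatorname{Cl}_{T_i}R_{i+1}|=|T_i^{-1}R_{i+1}|\ge|R_{i+1}|$.

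What is then needed, and what your sketch only gestures at, is near-invariance of $R_{i+1}$. Use $\partial_K(E\setminus E')\subset\partial_KE\cup\partial_KE'$, $\partial_K\bigl(\bigcup_\alpha E_\alpha\bigr)\subset\bigcup_\alpha\partial_KE_\alpha$, and \eqref{eq:right-boundary}. These give
\[
|\partial_{T_i}R_{i+1}|
\le|\partial_{T_i}S|+\sum_{j>i}|C_j|\,|\partial_{T_i}T_j|
\le\theta|S|+\frac{\theta}{1-\kappa}\sum_{j>i}|T_jC_j|
\le3\theta|S|.
\]
Here $(1-\kappa)|C_j|\,|T_j|\le|T_jC_j|$ comes from $\kappa$-disjointness, and $\sum_{j>i}|T_jC_j|=|A_{i+1}|\le|S|$ because the levels are disjoint. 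This bound on the number of contributing tiles is exactly where the hypothesis $|\partial_{T_j}T_i|\le\kappa^{2t}|T_i|$ $(j<i)$ enters. If $|R_{i+1}|\ge\kappa|S|$, you get
\[
|R_i|\le(1-\kappa)|R_{i+1}|+3\kappa\theta|S|\le(1-\kappa+3\theta)|R_{i+1}|.
\]
The error is $O(\theta)$ uniformly in $t$, not $O(t\theta)$. For $t\ge2$ and $\kappa\le1/2$ the factor is at most $1-\kappa/2$, and your final iteration then goes through.
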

Here we have written the filling lemma using our boundary notation and
our definition of quasitiling. Unlike the classical Ornstein-Weiss quasitiling theorem in \cite{ornstein1987entropy},  no nesting assumption on the $T_i$ is
needed. The finite combinatorial statement holds for any group; amenability
will be used to choose shapes satisfying its hypotheses.

\begin{remark}\label{disjointquasi}
Shrinking the quasitiles gives pairwise disjoint sets $J_{i,c}c\subset S$
with $J_{i,c}\subset T_i$, $|J_{i,c}|\geq(1-\kappa)|T_i|$, and
\[
\left|\bigsqcup_{i,c}J_{i,c}c\right|\geq(1-\kappa)^2|S|.
\]
Indeed, the sum of the untrimmed tile sizes is at least the size of their
union, and each individual tile loses at most a $\kappa$-proportion.
\end{remark}

The following notions and results are from \cite{downarowicz2019tilings}.

\begin{definition}\label{df:tiling}
	A quasitiling $\T$ is determined by two objects:
\begin{enumerate}
	\item a finite collection $\mathcal{S}(\mathcal{T})$ of finite subsets of $G$ containing the unity $e_G$, called the \textit{shapes}.
	\item a finite collection $\mathcal{C}(\mathcal{T}) = \{C(S): S\in\mathcal{S}(\mathcal{T})\}$ of disjoint subsets of $G$, called \textit{center sets} (for the shapes).
\end{enumerate}
The quasitiling is then the family $\mathcal{T} = \{(S,c): S\in\mathcal{S}(\mathcal{T}), c\in C(S)\}$.
We require that the map $(S,c)\mapsto Sc$ is injective.
Hence, by the \textit{tiles} of $\mathcal{T}$ (denoted by the letter $T$) we will mean either the sets $Sc$ or the pairs $(S,c)$ (i.e., the tiles with defined centers), depending on the context.

A quasitiling $\mathcal{T}$ is called an (exact) \textit{tiling} if it is a partition of $G$, i.e.

\[
G=\bigsqcup_{S\in\mathcal S}\bigsqcup_{c\in C(S)}Sc,
\]
\end{definition}

\begin{definition}\cite[Definition 3.5]{downarowicz2019tilings}
Every quasitiling $\mathcal{T}$ can be represented in a symbolic form, as a point $x_{\mathcal{T}} \in \Lambda^{G}$, with the alphabet $\Lambda = \mathcal{S}(\mathcal{T}) \cup \{0\}$, as follows: $x_{\mathcal{T}}(g) = S \iff g \in C(S)$, $0$ otherwise. Let $X_{\mathcal{T}}$ be the orbit closure of $x_{\mathcal{T}}$ under the shift action. This system is called the \textit{dynamical quasitiling} (generated by $\mathcal{T}$). If $\mathcal{T}$ is a tiling, we obtain a \textit{dynamical tiling}. The entropy of $\T$ is defined as
\[
h(\mathcal T)=h(X_{\mathcal T},G).
\]
\end{definition}
% The center configuration
%$x_{\mathcal T}\in(\mathcal S\cup\{0\})^G$ has value $S$ at each
%$c\in C(S)$ and zero elsewhere. We use the shift action
%\[
%(g x)(u)=x(ug)
%\]
%and write $X_{\mathcal T}=\overline{\{gx_{\mathcal T}:g\in G\}}$ and
%$h(\mathcal T)=h(X_{\mathcal T},G)$.

\begin{lemma}[Zero-entropy exact tilings]\label{newtiling1}
For every $K\in\F(G)$ and $\eta>0$, there is a finite-shape exact
tiling $\mathcal T$ of $G$ such that all its shapes are
$(K,\eta)$-invariant and $h(\mathcal T)=0$.
\end{lemma}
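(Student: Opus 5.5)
This is the main theorem of Downarowicz, Huczek and Zhang \cite{downarowicz2019tilings}. The plan is to reduce to it by translating our invariance convention into theirs.

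Their theorem is stated with the condition $|KS\triangle S|<\eta'|S|$. It asserts that for every $K'\in\F(G)$ and $\eta'>0$ there is a tiling $\mathcal T$ of $G$ with finitely many shapes, each shape $(K',\eta')$-invariant in that sense, and with $h(\mathcal T)=0$. Their shapes contain $e_G$ by Definition~\ref{df:tiling}. Since the present paper measures invariance by $|\partial_K F|<\eta|F|$, the only work is to choose $K'$ and $\eta'$ from the given $K$ and $\eta$.

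We may assume $e_G\in K$; enlarging $K$ only enlarges $\partial_K F$. For $c\in\partial_KF$ there are $k,k'\in K$ with $kc\in F$ and $k'c\notin F$. Put $f=kc\in F$. Then $k'k^{-1}f\notin F$, so $f\in F\setminus (K'')^{-1}F$, where $K''=KK^{-1}$. The map $c\mapsto (k,f)$ is injective, because $c=k^{-1}f$. This gives
\[
|\partial_KF|\le |K|\sum_{g\in K''}|F\setminus g^{-1}F| = |K|\sum_{g\in K''}|gF\setminus F| \le |K|\,|K''|\,\max_{g\in K''}|gF\triangle F|.
\]
Each term satisfies $|gF\triangle F|\le |K'F\triangle F|$ when $K'\supseteq K''\cup\{e_G\}$, because $K'F\supseteq F\cup gF$. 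So we take $K'=KK^{-1}\cup\{e_G\}$ and $\eta'=\eta/(|K|\,|K'|)$. Every $(K',\eta')$-invariant shape in the sense of \cite{downarowicz2019tilings} is then $(K,\eta)$-invariant in our sense, and the tiling they produce is the one required.

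The real content is the DHZ construction itself, which we cite rather than reprove. It builds a sequence of Ornstein–Weiss-type quasitilings with increasingly invariant shapes. These are converted into exact tilings by absorbing the small untiled remainder into adjacent tiles, which preserves invariance up to a controlled loss. The zero entropy comes from choosing the sequence so that each level is determined, up to subexponentially many choices, by the coarser levels. If one had to reprove it, that entropy bound would be the main obstacle. Here the only check is the boundary comparison above, which is routine.
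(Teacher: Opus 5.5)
Your proposal is the paper's argument: the paper likewise just invokes Theorem~5.2 of \cite{downarowicz2019tilings} and converts invariance conventions, and your explicit conversion with $K'=KK^{-1}\cup\{e_G\}$ and $\eta'=\eta/(|K|\,|K'|)$ is valid. Two small slips need fixing. First, the membership should read $f\in F\setminus g^{-1}F$ with $g=k'k^{-1}\in K''$; as written, $F\setminus (K'')^{-1}F$ is empty because $e_G\in K''$. Second, $|gF\triangle F|\le|K'F\triangle F|$ can fail by a factor of $2$, but your sum only needs $|gF\setminus F|\le|K'F\setminus F|=|K'F\triangle F|$, which does hold.
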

This follows from \cite[Theorem 5.2]{downarowicz2019tilings}, converting
invariance conventions if necessary. The shapes can also be required to
have arbitrarily large cardinality: include a finite set $H$ of large
cardinality in the test set and take its boundary tolerance below $1$.
If $|S|<|H|$, then $\operatorname{Int}_H S=\varnothing$, whereas
$S\subset\partial_H S$ when $e_G\in H$, a contradiction.

\subsection{The approximate product property}
We formulate the tracing condition on prescribed sufficiently invariant
sets. This formulation is the hypothesis used throughout the paper.

Let $\mathscr C(X)$ be the set of all sequences of points of $X$,
\[
\mathscr C(X):=\bigl\{\{x_k\}_{k=1}^\infty:x_k\in X\bigr\},
\]
and let $\mathcal{DF}(G)$ be the set of all sequences of pairwise disjoint nonempty
finite subsets of $G$,
\[
\mathcal{DF}(G):=\left\{\{F_k\}_{k=1}^\infty:
F_k\in\F(G),\ F_i\cap F_j=\varnothing\text{ for }i\ne j\right\}.
\]

\begin{definition}\label{def:tracing}
Let $\{x_k\}_{k\ge1}\in\mathscr C(X)$ and
$\{F_k\}_{k\ge1}\in\mathcal{DF}(G)$.
For $\varepsilon>0$, we say that $\{x_k\}$ is
$(\{F_k\},\varepsilon)$-traced by $z\in X$ if
\[
\rho(sz,sx_k)\le\varepsilon,
\qquad s\in F_k,\quad k\ge1.
\]
For $\delta,\varepsilon>0$, we say that $\{x_k\}$ is
$(\{F_k\},\delta,\varepsilon)$-traced by $z$ if
\[
\left|\{s\in F_k:\rho(sz,sx_k)>\varepsilon\}\right|
\le\delta|F_k|,
\qquad k\ge1.
\]
\end{definition}

\begin{definition}\label{finiteAPP}
  We say that $(X,G)$ has the \emph{finite approximate product property}
  if there exists a map
  \[
  m:(0,1)\times(0,1)\longrightarrow \F(G)\times(0,1)
  \]
  such that the following holds. Write
  \[
  m(\delta,\varepsilon)
  =
  \bigl(K(\delta,\varepsilon),\eta(\delta,\varepsilon)\bigr),\qquad e\in K(\delta,\varepsilon).
  \]
  For every $\delta,\varepsilon\in(0,1)$, every $n\in\mathbb N$,
  every $x_1,\ldots,x_n\in X$, and every pairwise disjoint family
  $F_1,\ldots,F_n\in\F(G)$ such that each $F_k$ is
  $\bigl(K(\delta,\varepsilon),\eta(\delta,\varepsilon)\bigr)$-invariant,
  there exists $z\in X$ such that
  \[
  \left|
  \left\{
  s\in F_k:
  \rho(sz,sx_k)>\varepsilon
  \right\}
  \right|
  \leq
  \delta |F_k|,
  \qquad k=1,\ldots,n.
  \]
\end{definition}

\begin{definition}\label{APP}
We say that $(X,G)$ has the \emph{approximate product property} if there exists a map
\[
m:(0,1)\times(0,1)\longrightarrow\F(G)\times(0,1)
\]
such that the following holds.  Write
\[
m(\delta,\varepsilon)=\bigl(K(\delta,\varepsilon),
\eta(\delta,\varepsilon)\bigr),\qquad e\in K(\delta,\varepsilon).
\]
For every $\delta,\varepsilon\in(0,1)$, every sequence
$\{x_k\}_{k\ge1}\in\mathscr C(X)$ and every
$\{F_k\}_{k\ge1}\in\mathcal{DF}(G)$ such that each $F_k$ is
$\bigl(K(\delta,\varepsilon),\eta(\delta,\varepsilon)\bigr)$-invariant,
there exists $z\in X$ which
$(\{F_k\}_{k\ge1},\delta,\varepsilon)$-traces $\{x_k\}_{k\ge1}$.
\end{definition}

\begin{definition}\label{def:specification}
The system $(X,G)$ has the \emph{specification property} if for every
$\varepsilon>0$ there is a nonempty finite set $K_\varepsilon\subset G$ such that,
whenever $E_1,\dots,E_n\in\F(G)$ satisfy
\[
K_\varepsilon E_i\cap E_j=\varnothing\qquad(i\ne j),
\]
and $x_1,\dots,x_n\in X$, there exists $z\in X$ satisfying
\[
\rho(sz,sx_i)\le\varepsilon,
\qquad s\in E_i,\quad i=1,\dots,n.
\]
\end{definition}

\begin{proposition}\label{finiteAPPimpliesAPP}
  The finite approximate product property is equivalent to the approximate
  product property.
\end{proposition}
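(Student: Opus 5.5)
The implication from the approximate product property to the finite one is immediate. Given $x_1,\dots,x_n$ and disjoint $(K(\delta,\varepsilon),\eta(\delta,\varepsilon))$-invariant sets $F_1,\dots,F_n$, we extend them to infinite sequences. Since $G$ is infinite, we can choose further $(K,\eta)$-invariant finite sets $F_{n+1},F_{n+2},\dots$, pairwise disjoint and disjoint from $F_1\cup\dots\cup F_n$: take one invariant set $F$ (for instance a member of a F\o lner sequence, using $|\partial_K F_n|/|F_n|\to0$) and right-translate it. By \eqref{eq:right-boundary}, $Fc$ is again $(K,\eta)$-invariant, and since $G$ is infinite we can pick $c_{n+1},c_{n+2},\dots$ successively so that each $Fc_j$ avoids the finitely many sets already chosen. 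Put $x_k$ arbitrary for $k>n$. The approximate product property then yields a point $z$, and its restriction to $k\le n$ gives the finite statement with the same map $m$.

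For the converse we keep the same map $m$ and use compactness. Fix $\delta,\varepsilon$, a sequence $\{x_k\}$, and disjoint invariant sets $\{F_k\}$. For each $N$, the finite property gives $z_N$ with
\[
\bigl|\{s\in F_k:\rho(sz_N,sx_k)>\varepsilon\}\bigr|\le\delta|F_k|,\qquad k\le N.
\]
Pass to a subsequence $z_{N_j}\to z$. Fix $k$. Since $F_k$ is finite, there are only finitely many subsets $E\subset F_k$. Pass to a further subsequence (by a diagonal argument over $k$, or separately for each $k$) along which the bad set $E_{k,N}=\{s\in F_k:\rho(sz_N,sx_k)>\varepsilon\}$ is a constant set $E_k$ with $|E_k|\le\delta|F_k|$. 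For $s\in F_k\setminus E_k$ we have $\rho(sz_N,sx_k)\le\varepsilon$ along the subsequence. By continuity of $x\mapsto sx$, the inequality survives in the limit: $\rho(sz,sx_k)\le\varepsilon$. The condition is closed because it is a non-strict inequality $\le\varepsilon$. Hence the bad set of $z$ in $F_k$ is contained in $E_k$, and $z$ is $(\{F_k\},\delta,\varepsilon)$-tracing.

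The only delicate point is the non-strict versus strict inequality in the definition of tracing. The bad set is defined by the strict inequality $>\varepsilon$, so the good set is closed and limits of good points remain good; this direction works. I would double-check that no strictness is lost, and that a single subsequence suffices: the stabilization only needs to be performed for each fixed $k$ individually, since the limit $z$ is already fixed. Indeed, for a fixed $k$, infinitely many $N_j\ge k$ share the same bad set $E$ by pigeonhole, and continuity at those indices gives the claim. So no diagonalization is actually required, and the argument is routine.
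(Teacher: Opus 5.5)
Your proposal is correct and follows essentially the paper's proof. The extension by right translates of one $(K,\eta)$-invariant set is the same, and your convergent-subsequence and pigeonhole argument is a sequential rephrasing of the paper's compactness step: the paper shows the tracing sets $C_k$ are closed, using the non-strict inequality $\rho(sz,sx_k)\le\varepsilon$ you flagged, and then intersects the nested nonempty closed sets $Y_k=\bigcap_{j\le k}C_j$.
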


\begin{proof}
Since an admissible finite family always can be extended to an infinite disjoint
admissible family by adding right translates of one sufficiently invariant
finite set, the  approximate product property implies the finite version.

Conversely, take $\delta,\varepsilon\in(0,1)$, a sequence
$\{x_k\}_{k\ge1}\in\mathscr C(X)$ and 
$\{F_k\}_{k\ge1}\in\mathcal{DF}(G)$ such that each $F_k$ is
$\bigl(K(\delta,\varepsilon),\eta(\delta,\varepsilon)\bigr)$-invariant.

For each $k\geq 1$, let
\[
C_k=\left\{z:\left|\{s\in F_k:\rho(sz,sx_k)>\varepsilon\}\right|
\leq\delta|F_k|\right\}
\]
and $Y_k = \bigcap\limits _{j=1}^{k}C_k$.
Writing $r_k=\lfloor\delta|F_k|\rfloor$, we have
\[
C_k=\bigcup_{\substack{A\subset F_k\\|A|\geq|F_k|-r_k}}
\bigcap_{s\in A}\{z:\rho(sz,sx_k)\leq\varepsilon\}.
\]
Thus $C_k$ is closed. By  the finite approximate product property, the set $Y_k$ is a closed non-empty subset of $X$. Thus we have $\bigcap_{k=1}^{\infty} Y_k\ne\varnothing$ and $z \in \bigcap\limits _{k=1}^{\infty} Y_k$ can $(\{F_k\}_{k\ge1},\delta,\varepsilon)$-trace $\{x_k\}_{k\ge1}$.

\end{proof}

%\begin{remark}
  %The compactness of $X$ is essential in the above argument, whereas the
  %amenability of $G$ is not used directly. The use of the non-strict tracing
  %condition
  %\[
  %\rho(sz,sx_k)\leq\varepsilon
  %\]
  %on the good coordinates guarantees that the corresponding tracing sets are
  %closed, which makes the finite intersection argument possible.
%\end{remark}

\begin{proposition}\label{prop:spec-implies-app}
The specification property implies the finite approximate product property.
\end{proposition}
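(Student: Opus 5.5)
Given $\delta,\varepsilon\in(0,1)$, I would choose $K=K(\delta,\varepsilon)$ and $\eta=\eta(\delta,\varepsilon)$ so that disjoint $(K,\eta)$-invariant sets can be trimmed to $K_\varepsilon$-separated cores losing at most a $\delta$-fraction. Specification is then applied to the cores. Concretely, take $L=K_\varepsilon^{-1}K_\varepsilon$ (recall $e_G\in K_\varepsilon$ may be assumed by enlarging $K_\varepsilon$, so $e_G\in L$). Then set $K(\delta,\varepsilon)=L\cup\{e_G\}$ and $\eta(\delta,\varepsilon)=\delta$. For pairwise disjoint $(K,\eta)$-invariant $F_1,\dots,F_n$, define the cores $E_k=\operatorname{Int}_{L}F_k=\{c:Lc\subset F_k\}$. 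The points of $F_k$ that are removed lie in $F_k\setminus\operatorname{Int}_L F_k$. Since $e_G\in L$, we have $F_k\subset\operatorname{Cl}_L F_k$, so these removed points lie in $\partial_L F_k\subset\partial_K F_k$ (the boundary grows with the test set: $\operatorname{Int}$ shrinks and $\operatorname{Cl}$ grows). Hence $|F_k\setminus E_k|<\eta|F_k|=\delta|F_k|$.

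The key step is to verify $K_\varepsilon E_i\cap E_j=\varnothing$ for $i\ne j$. Suppose $a c=c'$ with $a\in K_\varepsilon$, $c\in E_i$, $c'\in E_j$. Since $e_G\in K_\varepsilon$, we get $c'\in K_\varepsilon c'$ and $K_\varepsilon c=K_\varepsilon a^{-1}c'$. I will instead use the single point $c'=ac$. Because $c\in E_i$, we have $Lc\subset F_i$. Since $a=a\cdot e^{-1}\cdots$, more precisely $a\in K_\varepsilon\subset K_\varepsilon^{-1}K_\varepsilon$ (using $e_G\in K_\varepsilon$, write $a=e_G^{-1}a$), so $ac\in Lc\subset F_i$. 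Symmetrically, $c'=e_G c'\in Lc'\subset F_j$. Thus $c'\in F_i\cap F_j=\varnothing$, a contradiction. So in fact $L=K_\varepsilon\cup K_\varepsilon^{-1}\cup\{e_G\}$ would already suffice; I would use whichever is cleanest. Any empty cores $E_k$ are simply dropped, since the tracing requirement on $F_k$ is then vacuous: $|F_k|<\delta|F_k|$ is impossible, so in fact $E_k\neq\varnothing$.

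Applying specification to the nonempty $E_k$ and the points $x_k$ yields $z$ with $\rho(sz,sx_k)\le\varepsilon$ for all $s\in E_k$. The bad set for $F_k$ is then contained in $F_k\setminus E_k$, which has size at most $\delta|F_k|$, as required. The condition $e\in K(\delta,\varepsilon)$ holds by construction.

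The main obstacle is the bookkeeping of conventions: specification uses left multiplication $K_\varepsilon E_i$, while $\operatorname{Int}$ uses translates $Lc$. I would therefore double-check the containment $F_k\setminus\operatorname{Int}_L F_k\subset\partial_L F_k$ and the monotonicity $\partial_L F\subset\partial_K F$ for $L\subset K$; both are immediate from the definitions.
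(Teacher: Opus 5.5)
Your proof is correct and is essentially the paper's argument: take $\eta(\delta,\varepsilon)=\delta$, trim each $F_k$ to its interior with respect to the test set, apply specification to these cores, and observe that all mistakes lie in $F_k\setminus E_k$. The only difference is that your enlarged test set is unnecessary: your disjointness argument uses only $K_\varepsilon\subset L$ and $e_G\in L$, so the paper simply takes $K(\delta,\varepsilon)=K_\varepsilon$ and $E_k=\operatorname{Int}_{K_\varepsilon}F_k$. (Your observation that $E_k\neq\varnothing$ because $\delta<1$ is a detail the paper leaves implicit.)
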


\begin{proof}
Fix $\delta,\varepsilon\in(0,1)$, take the specification set
$K_\varepsilon$ with $e_{G}\in K_\varepsilon$, and set $m(\delta,\varepsilon)=(K_\varepsilon,\delta)$.
Fix an admissible finite family $(F_i)_{i=1}^n$ and $x_1,\dots,x_k \in X$.
Put
$E_i=\operatorname{Int}_{K_\varepsilon}F_i$. Then
\[
|F_i\setminus E_i|\leq|\partial_{K_\varepsilon}F_i|<\delta|F_i|,
\qquad K_\varepsilon E_i\cap E_j=\varnothing\quad(i\ne j).
\]
By specification, there is $z$ so that
\[
\rho_{E_i}(z, x_i) \leq \varepsilon, 1\leq i \leq k.
\]
All mistakes on $F_i$ lie in
$F_i\setminus E_i$, as required.
\end{proof}

%We have the following example with the help of Chatgpt 5.6.
We give an example with $\APP$ but without specification.
\begin{example}
\label{ex:app-not-specification}
Let $G$ be any infinite countable amenable group. Define
\[
Y=\{b\in\{0,1\}^G:|\{g\in G:b(g)=1\}|\leq1\},
\qquad X=\{0,1\}^G\times Y.
\]
Equip $X$ with the shift action
\[
(s(a,b))(g)=(a(gs),b(gs)),\qquad s,g\in G.
\]
Then $(X,G)$ has $\APP$ in the prescribed-position sense of
Definition~\ref{APP}, but it does not have specification. Moreover, it is
expansive and has topological entropy $\log 2$.
\end{example}

\begin{proof}
The set $Y$ is closed and invariant: its complement is the union of the
cylinders requiring a $1$ at each of two distinct coordinates.
Thus $X$ is a compact invariant subspace of the full shift over the alphabet
$\{0,1\}^2$.

Enumerate $G=\{g_1,g_2,\dots\}$ with $g_1=e_G$, and use the compatible metric
\[
d((a,b),(a',b'))=
\sum_{j=1}^{\infty}2^{-j}
\mathbf 1_{\{(a(g_j),b(g_j))\ne(a'(g_j),b'(g_j))\}}.
\]
We verify $\APP$ with the boundary convention
$|\partial_KF|<\eta|F|$ used in the manuscript.

Fix $\delta,\varepsilon\in(0,1)$. Choose $N$ such that
$2^{-N}<\varepsilon$, and put $W=\{g_1,\dots,g_N\}$.
Choose a finite set $K\supset W$ satisfying
\[
|K|>\frac{2|W|}{\delta},\qquad \eta=\frac\delta2.
\]
This choice of $(K,\eta)$ depends only on $\delta$ and $\varepsilon$.
Every nonempty $(K,\eta)$-invariant set $F$ satisfies
\begin{equation}\label{eq:example-block-size}
|F|\geq|K|>\frac{2|W|}{\delta}.
\end{equation}
Indeed, if $|F|<|K|$, then $\operatorname{Int}_KF=\varnothing$.
Since $e\in K$, one has $F\subset\operatorname{Cl}_KF=\partial_KF$,
contradicting $\eta<1$.

Let $(F_i)_{i\geq1}$ be any pairwise disjoint family of nonempty
$(K,\eta)$-invariant sets, and let $x_i=(a_i,b_i)\in X$.
Define $z=(a,0^G)\in X$ by
\[
a(g)=
\begin{cases}
a_i(g),&g\in F_i,\\
0,&g\notin\bigcup_iF_i.
\end{cases}
\]
This is well-defined because the $F_i$ are disjoint.
For each $i$, let
\[
\begin{aligned}
B_i^{\rm bd}&=F_i\setminus\operatorname{Int}_KF_i,\\
B_i^{\rm sp}&=\{s\in F_i:b_i(ws)=1\text{ for some }w\in W\}.
\end{aligned}
\]
Then $|B_i^{\rm bd}|\leq|\partial_KF_i|<\eta|F_i|$.
Since $b_i$ has at most one nonzero coordinate, $|B_i^{\rm sp}|\leq|W|$:
if that coordinate is $p_i$, every such $s$ lies in $W^{-1}p_i$.
For $s\in F_i\setminus(B_i^{\rm bd}\cup B_i^{\rm sp})$, the points
$sz$ and $sx_i$ agree at all coordinates in $W$, because
$Ws\subset Ks\subset F_i$ and $b_i$ vanishes on $Ws$.
Therefore $d(sz,sx_i)\leq2^{-N}<\varepsilon$. Using
\eqref{eq:example-block-size}, we obtain
\[
\begin{split}
|\{s\in F_i:d(sz,sx_i)>\varepsilon\}|
&\leq |B_i^{\rm bd}|+|B_i^{\rm sp}|\\
&<\frac\delta2|F_i|+|W|<\delta|F_i|.
\end{split}
\]
This proves the infinite-family version of $\APP$ directly.

To disprove specification, fix $\varepsilon_0=1/4$.
For any proposed finite separation set $L\subset G$, choose
$t\notin L\cup L^{-1}\cup\{e\}$. The sets
$E_1=\{e\}$ and $E_2=\{t\}$ satisfy
$LE_i\cap E_j=\varnothing$ for $i\ne j$.
For $p\in G$, write $b^{(p)}$ for the configuration whose unique $1$
is at $p$. Take
\[
x_1=(0^G,b^{(e)}),\qquad x_2=(0^G,b^{(t)}).
\]
If $z=(a,b)$ traces $x_1$ on $E_1$ and $x_2$ on $E_2$ with tolerance
$\varepsilon_0$, then the first inequality forces $b(e)=1$, and the
second forces $b(t)=1$. Indeed, a disagreement at coordinate $e=g_1$
has distance at least $1/2$. This contradicts $b\in Y$.
Thus specification fails.

The same observation shows expansiveness: distinct configurations differ
at some coordinate $s$, so their $s$-translates have distance at least
$1/2$. Finally, for every nonempty finite $F\subset G$,
\[
|\{x|_F:x\in X\}|=2^{|F|}(|F|+1).
\]
The symbolic pattern-count formula for amenable-group entropy gives
\[
h(X,G)=\lim_{n\to\infty}\frac1{|F_n|}
\log\bigl(2^{|F_n|}(|F_n|+1)\bigr)=\log 2
\]
along any F\o lner sequence $(F_n)$.
\end{proof}

\begin{remark}
The sparse coordinate in this example is zero almost surely for every
invariant probability measure. Indeed, the probabilities of the events
$\{b(g)=1\}$ are equal by invariance, while these events are pairwise
disjoint. Since $G$ is infinite, their common probability is zero.
Consequently every invariant measure on $X$ is supported on
$\{0,1\}^G\times\{0^G\}$. The example therefore proves strictness of
the implication from specification to $\APP$, even among expansive
positive-entropy actions, using nonrecurrent points in the sparse coordinate.
\end{remark}

\begin{remark}\label{rem:classical-app}
The positions of the sets $F_i$ in $\APP$ are prescribed. This differs
from the classical approximate product property, which allows small
transition intervals or shifts of blocks. For example, an irrational
circle rotation has the classical property: for every accuracy, a bounded
number of iterates suffices to move any phase close to any prescribed
phase, after which the rotation traces the entire next orbit segment.
The bounded transition times are a vanishing proportion of sufficiently
long segments.

In contrast, no isometric action on a space with two distinct points has
$\APP$. Choose $x_1,x_2$ with $\rho(x_1,x_2)>2\varepsilon$ and
$\delta<1$, and take two disjoint admissible sets. Since
$\rho(sz,sx_i)=\rho(z,x_i)$, the tracing requirement forces
$\rho(z,x_i)\leq\varepsilon$ for both $i$, which is impossible.
The irrational rotation therefore also shows why $\APP$ cannot be
identified with the classical hypothesis in \cite{sun2025ergodic}.
\end{remark}

Entropy-denseness for the classical approximate product property was
established in \cite{pfister2005large}. We prove the corresponding statement
under $\APP$ in Appendix~\ref{sec:appendix}. The metric $D$ used below is
defined in \eqref{metric1}.

\begin{definition}
Let $\mu\in\M(X,G)$.  We say that $\mu$ is \emph{entropy-approachable by ergodic
measures} if for every $\eta>0$ and every $h<h_\mu(X,G)$ there exists
$\nu\in\M_e(X,G)$ such that
\[
D(\mu,\nu)<\eta,
\qquad h_\nu(X,G)>h.
\]
The system is \emph{entropy-dense} if every invariant measure is
entropy-approachable by ergodic measures.
\end{definition}

\begin{proposition}\label{propositionentropydenseb}
If $(X,G)$ has the approximate product property, then $(X,G)$ is entropy-dense.
\end{proposition}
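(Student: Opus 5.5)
We follow the Pfister--Sullivan scheme, with the interval concatenations replaced by quasitilings. The exact prescribed positions in $\APP$ make this easier, since no transition gaps are needed.

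\emph{Reduction.} Fix $\mu\in\M(X,G)$, $\eta>0$ and $h<h_\mu(X,G)$. By the ergodic decomposition and the affinity of entropy, we first replace $\mu$ by a finite rational convex combination
\[
\mu'=\sum_{i=1}^{p}\frac{n_i}{N}\,\mu_i,\qquad \mu_i\in\M_e(X,G),
\]
with $D(\mu,\mu')<\eta/3$ and $\sum_i (n_i/N)\,h_{\mu_i}>h$. This step is standard: approximate the decomposing measure by finitely supported ones, using that $h_\cdot$ is affine and that the set of ergodic measures with $h_{\mu_i}$ close to a prescribed value is adequately dense.

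\emph{Orbit blocks.} For each $i$, we use the Katok-type entropy formula for amenable actions, which follows from the Shannon--McMillan--Breiman theorem along a tempered F\o lner sequence. Together with the mean ergodic theorem, this yields, for large $n$, an $(F_n,\varepsilon)$-separated set $\Gamma_i\subset X$ with two properties: $|\Gamma_i|\geq e^{|F_n|(h_{\mu_i}-\beta)}$, and every $x\in\Gamma_i$ has empirical measure $\frac1{|F_n|}\sum_{s\in F_n}\delta_{sx}$ within $\eta/3$ of $\mu_i$. Here $\varepsilon$ is small relative to the entropy scale.

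\emph{Construction of $\nu$.} We build a single point $z$ whose orbit statistics realize $\mu'$ with many choices. Take a sufficiently invariant large set $S$, quasitiled via Proposition~\ref{oldtiling1} and Remark~\ref{disjointquasi} by disjoint trimmed translates $J_{j,c}c$ of F\o lner sets $F_n$. These pieces remain $(K(\delta,\varepsilon),\eta(\delta,\varepsilon))$-invariant when $\kappa$ is small. Assign to the pieces labels $i$ in proportion $n_i/N$, and place points of $\Gamma_i$ on them. Apply $\APP$ (Definition~\ref{APP}) to the infinite family obtained by periodizing along right translates of $S$. This gives a large collection of points $z$. Let $Y$ be the closure of the set of all such $z$, taken over all choices of blocks and all translates, and let $\Lambda=\overline{GY}$. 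For any point of $\Lambda$, averaging over $S$ gives an empirical measure close to $\mu'$. The error comes only from the $\delta$ mistakes, the trimming loss $\kappa$, and the boundary of $S$. Hence every invariant measure on $\Lambda$ is within $\eta$ of $\mu$, and in particular every ergodic one is.

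\emph{Main obstacle: the entropy lower bound.} It remains to show $h(\Lambda,G)>h$, so that the variational principle produces an ergodic $\nu$ on $\Lambda$ with $h_\nu>h$. The difficulty is that the tracing mistakes may destroy separation. Two distinct block choices $x\neq x'\in\Gamma_i$ are $3\varepsilon$-apart somewhere on $F_n$, but that coordinate may be a mistake site. We handle this by a counting argument. For a fixed traced point, the number of block configurations compatible with it, up to $\varepsilon$-closeness off a $\delta$-fraction of each block, is at most
\[
\exp\bigl(|F_n|(\Hb(\delta)+\delta\log M)\bigr),
\]
where $M$ bounds a $(\varepsilon)$-covering number at one site. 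Equivalently, we pass to subsets of $\Gamma_i$ that are separated even after deleting any $\delta|F_n|$ sites, and this costs only this factor. Choosing $\delta$ small after $\varepsilon$ and $\beta$, the $(S,\varepsilon)$-separated sets in $\Lambda$ then have cardinality at least $\exp\bigl(|S|(\sum_i\frac{n_i}{N}h_{\mu_i}-2\beta)(1-\kappa)^2\bigr)$. Letting $S$ run along a F\o lner sequence gives $h(\Lambda,G)>h$, which completes the proof.
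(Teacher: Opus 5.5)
\textbf{Gap: the construction of $\Lambda$.} Periodizing along right translates of a single set $S$ means these translates tile $G$, i.e.\ $S$ is a monotile. It is not known whether every countable amenable group admits F\o lner monotiles. A maximal disjoint family of right translates of $S$, on the other hand, need not cover a proportion of $G$ close to $1$.

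This is not cosmetic. $\APP$ constrains the tracing point only on the prescribed blocks, and $Y$ contains all tracing points. So $\overline{GY}$ contains points that are arbitrary on large uncovered regions, and nothing forces the invariant measures on $\Lambda$ to lie near $\mu$. Averaging over $S$ is also the wrong test: for $gz$ with $g$ arbitrary, the window $Sg$ is not aligned with any tile. Localization requires that every window $K_mg$ be covered, up to a small proportion and uniformly in $g$, by complete controlled pieces.

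The paper supplies this as follows:
\begin{enumerate}
\item an exact tiling of $G$ by finitely many sufficiently invariant shapes (Lemma~\ref{newtiling1});
\item each shape quasitiled separately by trimmed F\o lner blocks (Lemma~\ref{lem:admissible-subtiles});
\item the subtiles colored in the prescribed proportions (Lemma~\ref{lem:balanced-coloring});
\item uniform coverage, and hence localization, from Lemma~\ref{lem:complete-local}.
\end{enumerate}
Your entropy step needs the same repair. $\Lambda$ is built from fixed shapes, so you cannot let $S$ run along a F\o lner sequence. You must count separated sets on $K_m$ as $m\to\infty$, using independent label choices on the complete tiles inside $K_m$; this is where the factor $1-\theta$ in the paper's bound comes from.

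\textbf{The treatment of mistakes is a valid alternative.} The paper takes labels already separated on a proportion $\delta^*$ of coordinates (Proposition~\ref{mainseppro1}, trimmed via Lemma~\ref{eslemma1}) and imposes $2\delta<\delta^*/2$. You instead start from Bowen $(F_n,3\varepsilon)$-separated sets and extract a subset greedily. This works by the coding argument of Lemma~\ref{lem:mistake-code}: each label has at most $\exp\bigl(|F_n|(\Hb(c\delta)+c\delta\log M)\bigr)$ others in $\Gamma_i$ that stay within $\varepsilon$ of it off at most $c\delta|F_n|$ sites, where $M$ is an $\varepsilon$-covering number. You should take $c$ large enough to absorb the mistakes of both tracing points and the trimmed part $F_n\setminus J$, so $\kappa$ must also be small compared with $\delta$; deleting only $\delta|F_n|$ sites is not enough.

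\textbf{The reduction step.} It needs no density of ergodic measures of prescribed entropy, which is close to what you are proving. Partition $\M(X)$ into finitely many cells of small diameter, and pick in each cell of positive $\vartheta_\mu$-mass an ergodic measure whose entropy is at least the cell average, or exceeds a prescribed finite value below it.
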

\begin{proof}
The proof is given in Appendix~\ref{sec:appendix}.
\end{proof}

\subsection{Measures and entropy}

For $\varphi\in C(X)$ and $\mu\in \M(X)$, we set
\[
\langle \varphi,\mu \rangle:=\int_X \varphi\,d\mu.
\]
There exists a countable and separating set of continuous functions
$\{\varphi_1,\varphi_2,\dots\}$ such that $0\leq \varphi_k(x)\leq 1$ and 
\begin{align}
D(\mu,\nu):=\sum_{k=1}^{\infty}\frac{1}{2^k}\big|\langle \varphi_k,\mu\rangle-\langle \varphi_k,\nu\rangle\big| \label{metric1}
\end{align}
defines a metric for the weak$^*$-topology on $\M(X)$. Note that for any $\mu,\nu\in \M(X)$, $D(\mu,\nu)\leq 1$. For $\mu \in \M(X), r>0$,the open ball of radius $r>0$ centered at $\mu$  is 
\[
\mathcal{B}(\mu,r) = \{\nu\in \M(X) : D(\mu,\nu) < r\}.
\]

We use the following compatible metric on $X$, still denoted by $\rho$.
The tracing properties above are unchanged by replacing a compatible metric
on a compact space, by uniform equivalence of compatible metrics:

\begin{align}
  \rho(x,y):=D(\delta_x,\delta_y). \label{metric2}
\end{align}

From the definition of metric $D$, it is straightforward to obtain the following proposition.
\begin{proposition}\label{measure0}
For any $n\geq1$, measures $\mu_1,\dots,\mu_n,\nu_1,\dots,\nu_n\in\M(X)$,
and weights $a_1,\dots,a_n\geq0$ satisfying $\sum_{i=1}^{n}a_i =1$, we have
\[
D(\sum_{i=1}^{n}a_i\mu_i, \sum_{i=1}^{n}a_i\nu_i) \leq \sum_{i=1}^{n}a_i D (\mu_i, \nu_i).
\]
\end{proposition}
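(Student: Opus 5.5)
The plan is to argue coordinatewise from the definition \eqref{metric1}, using only that integration is linear in the measure and the triangle inequality for absolute values. For each fixed $k$, linearity of $\mu\mapsto\langle\varphi_k,\mu\rangle$ gives
\[
\Bigl\langle \varphi_k,\sum_{i=1}^n a_i\mu_i\Bigr\rangle-\Bigl\langle \varphi_k,\sum_{i=1}^n a_i\nu_i\Bigr\rangle
=\sum_{i=1}^n a_i\bigl(\langle\varphi_k,\mu_i\rangle-\langle\varphi_k,\nu_i\rangle\bigr).
\]
Here the convex combinations are again probability measures because $a_i\ge0$ and $\sum_i a_i=1$. Since $a_i\geq0$, the triangle inequality then bounds the absolute value of the left side by $\sum_i a_i\,|\langle\varphi_k,\mu_i\rangle-\langle\varphi_k,\nu_i\rangle|$.

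Next, I would multiply this bound by $2^{-k}$ and sum over $k\ge1$. All terms are nonnegative, and every series involved converges absolutely because $0\le\varphi_k\le1$ gives $|\langle\varphi_k,\cdot\rangle-\langle\varphi_k,\cdot\rangle|\le1$. The finite sum over $i$ can therefore be interchanged with the sum over $k$. The resulting expression is exactly $\sum_i a_i D(\mu_i,\nu_i)$, which proves the inequality.

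There is no real obstacle here. The only point needing care is justifying the interchange of the finite sum over $i$ with the infinite sum over $k$, and this is immediate from nonnegativity (Tonelli for series) or from the uniform bound $2^{-k}$ on the terms.
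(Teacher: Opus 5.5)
Your proposal is correct and is the same argument the paper has in mind: it gives no written proof, saying only that the inequality follows directly from the definition \eqref{metric1} of $D$. You spell this out by using linearity of $\mu\mapsto\langle\varphi_k,\mu\rangle$ and the triangle inequality for each $k$, then summing the nonnegative series term by term.
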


\subsection{Topological entropy and asymptotic entropy expansiveness}

%Fix a tempered F\o lner sequence $\K=\{K_n\}_{n\ge1}$ with
%$|K_n|/\log(n+1)\to\infty$. 
Fix a F\o lner sequence $\K=\{K_n\}_{n\ge1}$.
 For a nonempty finite
$F\subset G$ define the Bowen metric
\[
\rho_F(x,y):=\max_{g\in F}\rho(gx,gy).
\]
A set is $(F,\varepsilon)$-separated
if distinct points have Bowen distance strictly greater than $\varepsilon$.
For $Y\subset X$ and $\varepsilon>0$, let $s(Y,F,\varepsilon)$ be the maximal
cardinality of an $(F,\varepsilon)$-separated subset of $Y$, and define
\[
h(Y,G,\varepsilon):=
\limsup_{n\to\infty}\frac1{|K_n|}
\log s(Y,K_n,\varepsilon).
\]
If $Y$ is compact and $G$-invariant, then
\[
h(Y,G):=\lim_{\varepsilon\to0}h(Y,G,\varepsilon).
\]
In particular, $h(X,G)$ is the topological entropy of the action and is
independent of the chosen F\o lner sequence.

To define local entropy, for $A\subset X$, $x\in X$ and
$\delta,\varepsilon>0$, set
\[
B_{K_n}(x,\varepsilon):=
\{y\in X:\rho_{K_n}(x,y)<\varepsilon\}
\]
and let
\[
s(K_n,\delta,\varepsilon,A,x)
\]
be the maximal cardinality of a $(K_n,\delta)$-separated subset of
$A\cap B_{K_n}(x,\varepsilon)$.  Put
\[
s(K_n,\delta,\varepsilon,A):=
\sup_{x\in A}s(K_n,\delta,\varepsilon,A,x)
\]
and
\[
h(A,\varepsilon):=
\lim_{\delta\to0}
\limsup_{n\to\infty}\frac1{|K_n|}
\log s(K_n,\delta,\varepsilon,A).
\]
For $\mu\in\M(X,G)$ define
\[
h_{\mu,\mathrm{loc}}(\varepsilon,G):=
\lim_{\sigma\to1^-}
\inf\{h(A,\varepsilon):A\subset X\text{ compact},\ \mu(A)>\sigma\},
\]
and
\[
h_{\mathrm{loc}}(\varepsilon,G):=
\sup_{\mu\in\M(X,G)}h_{\mu,\mathrm{loc}}(\varepsilon,G).
\]

The following definition of asymptotically entropy expansiveness for amenable group actions is from \cite{ren2016local}.
\begin{definition}
The system $(X,G)$ is
\emph{asymptotically entropy expansive} if there is a
F\o lner sequence with respect to which
\[
\lim_{\varepsilon\to0}h_{\mathrm{loc}}(\varepsilon,G)=0.
\]
When this property is assumed, we take $\K$ from a sequence witnessing
this limit by passing to a tempered subsequence and then, if needed, to a
further subsequence satisfying $|K_n|/\log(n+1)\to\infty$.
These passages do not increase any of the local-entropy quantities.
All finite-scale quantities below refer to this fixed observation sequence.
\end{definition}

As shown in \cite{ren2016local}, asymptotic entropy expansiveness implies
upper semicontinuity of the entropy map
$\mu\mapsto h_\mu(X,G)$.  We shall also use the following local-entropy
estimate.

\begin{proposition}[\cite{ren2016local}]\label{asympexpansive2}
For every $\varepsilon>0$,
\[
h(X,G)\le h(X,G,\varepsilon)+h_{\mathrm{loc}}(\varepsilon,G).
\]
More generally, for every compact $G$-invariant $\Lambda\subset X$,
\[
h(\Lambda,G)
\le h(\Lambda,G,\varepsilon)+h_{\mathrm{loc}}^X(\varepsilon,G).
\]
\end{proposition}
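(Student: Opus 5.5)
The statement just quoted, Proposition~\ref{asympexpansive2}, is attributed to \cite{ren2016local}. I would reprove it by the standard Bowen--Misiurewicz covering argument, adapted to F\o lner sets. I argue directly for a compact $G$-invariant $\Lambda$; the case $\Lambda=X$ gives the first inequality. The local entropy is taken with respect to ambient points but applied to subsets of $\Lambda$, so $h^X_{\mathrm{loc}}$ is the right quantity on the right-hand side.

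\textbf{Reduction to a measure.} By the variational principle for amenable actions, $h(\Lambda,G)=\sup_{\mu\in\M(\Lambda,G)}h_\mu(X,G)$. It therefore suffices to show, for every ergodic $\mu$ supported on $\Lambda$,
\[
h_\mu(X,G)\le h(\Lambda,G,\varepsilon)+h_{\mu,\mathrm{loc}}(\varepsilon,G).
\]
Since $h_{\mu,\mathrm{loc}}(\varepsilon,G)\le h_{\mathrm{loc}}(\varepsilon,G)$, this gives the claim. I would express $h_\mu$ through a Katok-type formula along the tempered F\o lner sequence $\K$. The formula follows from the Shannon--McMillan--Breiman theorem for amenable groups (Lindenstrauss), which is why temperedness and $|K_n|/\log(n+1)\to\infty$ are imposed. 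Concretely, for $\sigma$ close to $1$ and small $\delta$, any set of $\mu$-measure $>\sigma$ requires roughly $e^{|K_n|(h_\mu-\gamma)}$ Bowen $(K_n,\delta)$-balls to cover it.

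\textbf{Counting.} Fix a compact $A\subset\Lambda$ with $\mu(A)>\sigma$ and $h(A,\varepsilon)\le h_{\mu,\mathrm{loc}}(\varepsilon,G)+\gamma$. Let $E_n$ be a maximal $(K_n,\varepsilon)$-separated subset of $A$. Its cardinality is at most $s(\Lambda,K_n,\varepsilon)$, and its $\rho_{K_n}$ $\varepsilon$-balls cover $A$; after enlarging the radius to $2\varepsilon$, I use $h(A,2\varepsilon)$ instead, which changes nothing after relabeling $\varepsilon$. Inside each ball $B_{K_n}(x,\varepsilon)\cap A$, a maximal $(K_n,\delta)$-separated set has at most $s(K_n,\delta,\varepsilon,A)$ points, and its $\delta$-balls cover that piece. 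Hence $A$ is covered by
\[
s(\Lambda,K_n,\varepsilon)\cdot s(K_n,\delta,\varepsilon,A)
\]
Bowen $(K_n,\delta)$-balls. Taking $\frac1{|K_n|}\log$, then $\limsup_n$, and letting $\delta\to0$, $\gamma\to0$, $\sigma\to1$, gives the displayed inequality.

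\textbf{Main obstacle.} The delicate step is the Katok entropy formula for amenable group actions with a $\limsup$ along the fixed sequence $\K$. The upper-bound direction needs a lower bound on the number of Bowen balls required to cover sets of large measure. Here I would use the pointwise SMB theorem along tempered F\o lner sequences: a generic point's Bowen $(K_n,\delta)$-ball is contained in an atom of the partition $\bigvee_{g\in K_n}g^{-1}\xi$, up to a small boundary-layer error controlled by choosing $\xi$ with small-measure boundaries. This mirrors the classical $\mathbb Z$ argument, and I would cite it from \cite{ren2016local} rather than redo it.
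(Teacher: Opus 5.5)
Your overall route is sound, but it differs from the paper's. The paper does not reprove the inequality. It cites \cite{ren2016local} for the case $\Lambda=X$, and obtains the subsystem case in Remark~\ref{rem:local-restriction} by applying that case to $(\Lambda,G)$ and noting $h^{\Lambda}_{\mathrm{loc}}(\varepsilon,G)\le h^{X}_{\mathrm{loc}}(\varepsilon,G)$. Your reproof combines the variational principle on $\Lambda$, a Katok-type lower bound on covering numbers for ergodic $\mu$ along $\K$, and a two-level cover of a compact $A\subset\Lambda$ with $\mu(A)>\sigma$. It is self-contained up to the Katok bound. It also absorbs the restriction remark, since taking $A\subset\Lambda$ with centers in $A$ is exactly that remark. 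Only the lower bound on covering numbers is needed, and $\limsup(a_n+b_n)\le\limsup a_n+\limsup b_n$ makes the order of limits harmless. The closed-versus-open ball issue at the inner scale $\delta$ is also harmless, because $\delta\to0$.

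The gap is at the outer scale. A maximal $(K_n,\varepsilon)$-separated set $E_n\subset A$ only guarantees $\rho_{K_n}(x,y)\le\varepsilon$ for some $x\in E_n$. However, $s(K_n,\delta,\varepsilon,A,x)$ counts points of the open ball $B_{K_n}(x,\varepsilon)$, so the pieces are not controlled at scale $\varepsilon$. Enlarging the radius to $2\varepsilon$ proves
\[
h(\Lambda,G)\le h(\Lambda,G,\varepsilon)+h^{X}_{\mathrm{loc}}(2\varepsilon,G),
\]
and relabeling converts this to $h(\Lambda,G)\le h(\Lambda,G,\varepsilon/2)+h^{X}_{\mathrm{loc}}(\varepsilon,G)$. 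Neither is the claim. Since $s(\Lambda,K_n,\cdot)$ is nonincreasing and $h(A,\cdot)$ is nondecreasing in the scale, both right-hand sides are at least $h(\Lambda,G,\varepsilon)+h^{X}_{\mathrm{loc}}(\varepsilon,G)$. So you have proved a weaker two-scale inequality.

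Replacing $2\varepsilon$ by an arbitrary $\varepsilon'>\varepsilon$ does not close the gap either. That would require right-continuity of $h^{X}_{\mathrm{loc}}(\cdot,G)$ at $\varepsilon$, which is not available. The statement "relabeling changes nothing" is therefore false. The single-scale inequality needs an additional argument, such as the one in \cite{ren2016local}, or a change of conventions (for example closed Bowen balls in the local entropy), which changes the statement.

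The weaker form does suffice for every use in the paper. In Proposition~\ref{prop:almost-to-entropy}, ask almost entropy-approximability for some $\gamma<\varepsilon_0/2$, so that $h^{X}_{\mathrm{loc}}(2\gamma,G)<\eta$. The finiteness claim in Remark~\ref{rem:local-restriction} is unaffected.
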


\begin{remark}\label{rem:local-restriction}
The superscript $X$ denotes the ambient system. For a compact invariant
$\Lambda\subset X$, its local entropy is at most the ambient local entropy:
intersect each compact set in the defining infimum with $\Lambda$ and
restrict the centers of the Bowen balls. Thus the subsystem estimate follows
from the estimate applied to $(\Lambda,G)$.
For each fixed scale, $h(X,G,\varepsilon)<\infty$, by coding with a finite
cover of diameter less than $\varepsilon$. Hence the local-entropy estimate
implies $h(X,G)<\infty$ under asymptotic entropy expansiveness.
\end{remark}

\begin{lemma}\label{lem:exhaustive-tempered}
Every infinite countable amenable group $G$ admits an increasing left
tempered F\o lner sequence $(F_n)$ such that
\[
e\in F_1,\qquad \bigcup_{n\geq1}F_n=G,\qquad |F_n|\geq n^2.
\]
In particular, $|F_n|/\log(n+1)\to\infty$.
\end{lemma}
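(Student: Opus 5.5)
The plan is to build the sequence by hand in three stages: first a tempered F\o lner sequence, then monotonicity and exhaustion, and finally the size bound. Each stage must not destroy the properties already obtained.

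\emph{Stage 1 (tempered start).} Enumerate $G=\{g_1,g_2,\dots\}$ with $g_1=e$. Since $G$ is amenable, fix a left F\o lner sequence $(E_n)$, and by \cite[Proposition 1.4]{lindenstrauss2001pointwise} pass to a tempered subsequence. The right-translation invariance \eqref{eq:right-boundary} and the left F\o lner condition are preserved under right translation $E\mapsto Ec$, so we may translate and enlarge sets freely as long as we control the F\o lner ratios.

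\emph{Stage 2 (inductive construction).} Build $F_n$ inductively. Given $F_1,\dots,F_{n-1}$, let $A_n=F_{n-1}\cup\{g_1,\dots,g_n\}$, a finite set. Choose a set $E$ from the F\o lner sequence that is $(A_n\cup A_n^{-1}\cup\{g_1,\dots,g_n\},1/n)$-invariant and large: specifically $|E|\geq n|A_n|+n^2$ and $|E|\geq n\,|\bigcup_{j<n}F_j^{-1}A_n|$. Such an $E$ exists because F\o lner sets in an infinite group have cardinality tending to infinity. Then put $F_n=E\cup A_n$ (with $F_1\ni e$ arranged at the start). Since $|A_n|\leq|E|/n$, the set $F_n$ still satisfies $|gF_n\triangle F_n|/|F_n|\leq 1/n+2/n$ for $g\in\{g_1,\dots,g_n\}$. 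Hence $(F_n)$ is F\o lner, increasing, exhaustive, and has $|F_n|\geq n^2$.

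\emph{Stage 3 (temperedness).} This is the main obstacle, since adding $A_n$ and choosing $E$ afresh may break the tempered inequality. I would bound
\[
\bigcup_{j<n}F_j^{-1}F_n=\bigcup_{j<n}F_j^{-1}E\ \cup\ \bigcup_{j<n}F_j^{-1}A_n .
\]
The second union has size at most $|F_{n-1}^{-1}A_n|\leq |E|$, since the $F_j$ are increasing. For the first union, note $F_j^{-1}\subset F_{n-1}^{-1}$, so it equals $F_{n-1}^{-1}E$. We need $|F_{n-1}^{-1}E|\leq C|E|$. Choosing $E$ sufficiently invariant under the finite set $F_{n-1}^{-1}$ gives $|F_{n-1}^{-1}E|\leq \sum$\dots; more precisely, $(F_{n-1}^{-1},1/|F_{n-1}|)$-near-invariance yields $|gE\setminus E|\leq |E|/|F_{n-1}|$ for each $g\in F_{n-1}^{-1}$. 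Therefore
\[
|F_{n-1}^{-1}E|\leq|E|+\sum_{g}|gE\setminus E|\leq 2|E|.
\]
This makes the tempered constant $C=3$ without invoking Lindenstrauss's subsequence lemma at all, so Stage 1 reduces to just using amenability.

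Finally, $|F_n|\geq n^2$ gives $|F_n|/\log(n+1)\geq n^2/\log(n+1)\to\infty$.
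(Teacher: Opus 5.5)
Your proposal is correct and is essentially the paper's argument. The paper also builds $F_n\supset F_{n-1}\cup\{g_1,\dots,g_n\}$ inductively by adjoining a huge nearly invariant set, with near-invariance imposed under $L_n=\{g_1,\dots,g_n\}\cup\bigcup_{j<n}F_j^{-1}$, so that temperedness follows at once from $|L_nF_n|<(1+\tfrac1n)|F_n|$ (constant $2$ rather than your $3$); as you note yourself, Lindenstrauss's subsequence result is not needed. The differences are minor: the paper gets largeness from many disjoint right translates of one F\o lner set rather than from the fact that F\o lner sets in an infinite group have cardinality tending to infinity, and your $F_{n-1}^{-1}$-invariance requirement with tolerance $1/|F_{n-1}|$ should be stated as part of the choice of $E$ in Stage~2 rather than added afterwards.
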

\begin{proof}
We first observe that, for finite sets $L,D\subset G$ with $e\in L$,
any $\eta>0$ and any $M>0$, there exists a finite set $F\supset D$
such that $|F|\geq M$ and $|LF\setminus F|<\eta|F|$.
Indeed, amenability gives a nonempty finite set $A$ satisfying
$|LA\setminus A|<(\eta/2)|A|$.
Since $G$ is infinite, we may take arbitrarily many pairwise disjoint
right translates of $A$. Their union $H$ satisfies
$|LH\setminus H|<(\eta/2)|H|$ and can be chosen so large that
$|H|\geq M$ and $|LD|<(\eta/2)|H|$.
Then $F=H\cup D$ has the required properties, since
\[
|LF\setminus F|
\leq |LH\setminus H|+|LD|
<\eta|H|\leq\eta|F|.
\]

Enumerate $G=\{g_1,g_2,\ldots\}$ with $g_1=e$, and put $F_0=\varnothing$.
Having chosen $F_1,\ldots,F_{n-1}$, set
\[
D_n=F_{n-1}\cup\{g_1,\ldots,g_n\},\qquad
L_n=\{g_1,\ldots,g_n\}\cup\bigcup_{j<n}F_j^{-1}.
\]
Apply the preceding observation to choose $F_n\supset D_n$ with
\[
|F_n|\geq n^2,\qquad |L_nF_n\setminus F_n|<\frac1n|F_n|.
\]
The sequence is increasing and exhaustive, and $e\in F_1$.
For every fixed $g_k$ and every $n\geq k$,
\[
\frac{|g_kF_n\triangle F_n|}{|F_n|}
=\frac{2|g_kF_n\setminus F_n|}{|F_n|}<\frac2n,
\]
so it is a left F\o lner sequence. Moreover,
\[
\left|\bigcup_{j<n}F_j^{-1}F_n\right|
\leq |L_nF_n|
<\left(1+\frac1n\right)|F_n|\leq2|F_n|.
\]
Thus the sequence is tempered with constant $2$.
The required growth condition follows from $|F_n|\geq n^2$.
\end{proof}

We fix an auxiliary sequence $(F_n)$ as in Lemma~\ref{lem:exhaustive-tempered}.
It is used only to select generic orbit blocks and quasitiling shapes.
The observation sequence $(K_n)$ need not be increasing.

For a finite Borel partition $\mathcal P$, write
$\mathcal P^F=\bigvee_{g\in F}g^{-1}\mathcal P$. We use the usual
measure entropy
\[
h_\mu(X,G)=\sup_{\mathcal P}
\lim_{n\to\infty}\frac{H_\mu(\mathcal P^{K_n})}{|K_n|}.
\]
The variational principle, affinity of entropy, and the entropy formula for
ergodic decomposition will be used throughout. In particular,
\[
\begin{aligned}
h(X,G)&=\sup_{\mu\in\M_e(X,G)}h_\mu(X,G),\\
h_\mu(X,G)&=\int_{\M_e(X,G)}h_\nu(X,G)\,d\vartheta_\mu(\nu),
\end{aligned}
\]
where $\vartheta_\mu$ is the ergodic decomposition measure; the integral
may be infinite. These are standard entropy facts for amenable actions
\cite{ornstein1987entropy}. The space $\M(X,G)$ is a nonempty compact
metrizable convex set, and its ergodic measures form a $G_\delta$ subset. For more information about ergodic theory for amenable group actions, readers may refer to \cite{kerr2016}.

\subsection{Empirical measures}
We record the empirical-measure estimates used in the construction.

For $x\in X$ and $F\in\F(G)$, define the empirical measure $\mathcal{E}(x,F)$ by
\[
\mathcal{E}(x,F)=\frac{1}{|F|}\sum_{s\in F}\delta_{sx},
\]
where $\delta_x$ denotes the unit measure concentrated on the point $x$.

Given a set $C\subseteq \M(X)$ and $F\in\F(G)$, let
\[
X_{F,C}=\{x\in X:\;\mathcal{E}(x,F)\in C\}.
\]

\begin{lemma}\label{measures1}
If $\varnothing\ne F'\subset F\in\F(G)$, then, for every $x\in X$,
\[
D(\mathcal E(x,F),\mathcal E(x,F'))
\leq\frac{|F\setminus F'|}{|F|}.
\]
\end{lemma}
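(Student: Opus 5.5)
The plan is to compare the two empirical measures by splitting the larger one into its $F'$-part and its $(F\setminus F')$-part. Write $a=|F'|/|F|$ and $b=|F\setminus F'|/|F|=1-a$. By definition,
\[
\mathcal E(x,F)=a\,\mathcal E(x,F')+\frac1{|F|}\sum_{s\in F\setminus F'}\delta_{sx}.
\]
If $F'=F$ the claim is trivial. Otherwise $F\setminus F'$ is nonempty, and we may write
\[
\mathcal E(x,F)=a\,\mathcal E(x,F')+b\,\mathcal E(x,F\setminus F'),
\qquad
\mathcal E(x,F')=a\,\mathcal E(x,F')+b\,\mathcal E(x,F').
\]
Both are convex combinations with the same weights $a,b\ge0$ summing to one.

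Next we apply Proposition~\ref{measure0} with $n=2$. The pairs to compare are $\mu_1=\nu_1=\mathcal E(x,F')$, and $\mu_2=\mathcal E(x,F\setminus F')$, $\nu_2=\mathcal E(x,F')$. This gives
\[
D(\mathcal E(x,F),\mathcal E(x,F'))\le a\cdot 0+b\,D\bigl(\mathcal E(x,F\setminus F'),\mathcal E(x,F')\bigr)\le b.
\]
The last step uses the bound $D\le1$, which holds because each $\varphi_k$ takes values in $[0,1]$ and the weights $2^{-k}$ sum to $1$. Since $b=|F\setminus F'|/|F|$, this is exactly the claim.

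There is no real obstacle; the only point needing care is the degenerate case $F'=F$, where $\mathcal E(x,F\setminus F')$ is undefined. That case is handled separately above, and both sides vanish there.
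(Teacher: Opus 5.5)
Your proof is correct and is essentially the paper's argument. The paper likewise treats $F'=F$ separately, writes $\mathcal E(x,F)=t\,\mathcal E(x,F')+(1-t)\,\mathcal E(x,F\setminus F')$ with $t=|F'|/|F|$, and concludes from convexity of $D$ (Proposition~\ref{measure0}) together with $D\leq 1$.
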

\begin{proof}
If $F'=F$ there is nothing to prove. Otherwise, with
$t=|F'|/|F|$, write
$\mathcal E(x,F)=t\mathcal E(x,F')+(1-t)\mathcal E(x,F\setminus F')$
and use convexity and $D\leq1$.
\end{proof}

\begin{lemma}\label{measures2}
If $D(\mathcal E(x,F),\mu)<\eta$ and $\varnothing\ne F'\subset F$, then
\[
D(\mathcal E(x,F'),\mu)<\eta+\frac{|F\setminus F'|}{|F|}.
\]
\end{lemma}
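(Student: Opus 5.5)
This is a direct consequence of Lemma~\ref{measures1} and the triangle inequality for the metric $D$. Fix $x\in X$, a measure $\mu$ and finite sets $\varnothing\ne F'\subset F$ with $D(\mathcal E(x,F),\mu)<\eta$. The plan is to compare $\mathcal E(x,F')$ with $\mu$ by passing through the intermediate measure $\mathcal E(x,F)$:
\[
D(\mathcal E(x,F'),\mu)\leq D(\mathcal E(x,F'),\mathcal E(x,F))+D(\mathcal E(x,F),\mu).
\]

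By hypothesis, the second term on the right is strictly less than $\eta$. For the first term, we use the symmetry of $D$ and apply Lemma~\ref{measures1} to the pair $F'\subset F$. This bounds the first term by $|F\setminus F'|/|F|$.

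Adding the two bounds gives the strict inequality
\[
D(\mathcal E(x,F'),\mu)<\eta+\frac{|F\setminus F'|}{|F|}.
\]
The inequality stays strict because one of the two summands is strictly bounded.

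There is no real obstacle here. The only point to check is that $F'$ is nonempty, so that $\mathcal E(x,F')$ is a probability measure and Lemma~\ref{measures1} applies; this is exactly the hypothesis of the statement. The case $F'=F$ is also covered, since the correction term is then $0$.
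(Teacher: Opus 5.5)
Your proposal is correct and is exactly the paper's argument: the paper's proof also applies Lemma~\ref{measures1} to bound $D(\mathcal E(x,F),\mathcal E(x,F'))$ by $|F\setminus F'|/|F|$ and then uses the triangle inequality for $D$.
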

\begin{proof}
Apply Lemma~\ref{measures1} and the triangle inequality.
\end{proof}

\begin{lemma}\label{measures3}
Let $F\in\F(G)$, $F'\subset F$, and $x,y\in X$. If
$\rho(sx,sy)\leq\varepsilon$ for $s\in F'$, then
\[
D(\mathcal E(x,F),\mathcal E(y,F))
\leq\varepsilon+\frac{|F\setminus F'|}{|F|}.
\]
\end{lemma}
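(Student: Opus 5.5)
The plan is to split $\mathcal E(x,F)$ and $\mathcal E(y,F)$ along the decomposition $F=F'\sqcup(F\setminus F')$ and bound the two pieces separately, using the convexity estimate of Proposition~\ref{measure0} and the choice of metric \eqref{metric2}. If $F'=\varnothing$ the bound is immediate, since $D\leq1=|F\setminus F'|/|F|$. Assume $F'\neq\varnothing$ and put $t=|F'|/|F|$. When $F'\ne F$ we write
\[
\mathcal E(x,F)=t\,\mathcal E(x,F')+(1-t)\,\mathcal E(x,F\setminus F'),
\]
and similarly for $y$. When $F'=F$ the second term is simply absent.

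Proposition~\ref{measure0} then gives
\[
D(\mathcal E(x,F),\mathcal E(y,F))\leq t\,D(\mathcal E(x,F'),\mathcal E(y,F'))+(1-t)\,D(\mathcal E(x,F\setminus F'),\mathcal E(y,F\setminus F')).
\]
Since $D\leq1$, the second term is at most $1-t=|F\setminus F'|/|F|$.

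For the first term, apply Proposition~\ref{measure0} once more, now with the uniform weights $1/|F'|$ on the Dirac masses:
\[
D(\mathcal E(x,F'),\mathcal E(y,F'))\leq\frac1{|F'|}\sum_{s\in F'}D(\delta_{sx},\delta_{sy})=\frac1{|F'|}\sum_{s\in F'}\rho(sx,sy)\leq\varepsilon.
\]
The equality here is exactly the definition \eqref{metric2}, $\rho(u,v)=D(\delta_u,\delta_v)$. Hence the first term is at most $t\varepsilon\leq\varepsilon$, and adding the two bounds gives the claim.

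There is no real obstacle in this argument. The only point to check is that the metric on $X$ is the specific one from \eqref{metric2}, which makes the Dirac-mass comparison exact rather than merely uniformly continuous.
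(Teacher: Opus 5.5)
Your proof is correct and is essentially the paper's argument. The paper applies convexity with uniform weights $1/|F|$ over all of $F$ at once, which gives the bound $\frac1{|F|}\sum_{s\in F}\rho(sx,sy)$; it then uses $\rho\le\varepsilon$ on $F'$ and $\rho\le1$ on $F\setminus F'$. This collapses your two-stage convexity into one step and covers $F'=\varnothing$ without a separate case.
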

\begin{proof}
By convexity and \eqref{metric2}, the left side is bounded by
\[
\frac1{|F|}\sum_{s\in F}\rho(sx,sy).
\]
Use $\rho\leq1$ on $F\setminus F'$.
This argument also covers $F'=\varnothing$.
\end{proof}

\begin{lemma}\label{measures5}
Let $P_1,\dots,P_k\subset F$ be pairwise disjoint nonempty sets whose
union has cardinality greater than $(1-\tau)|F|$. Suppose
$D(\mathcal E(x_i,P_i),\mu)<\eta$ and
\[
\left|\{s\in P_i:\rho(sz,sx_i)>\varepsilon\}\right|
\leq\delta|P_i|\quad(1\leq i\leq k).
\]
Then $D(\mathcal E(z,F),\mu)<\eta+\varepsilon+\delta+\tau$.
\end{lemma}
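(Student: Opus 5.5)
The plan is to compare $\mathcal E(z,F)$ with the convex combination of the block empirical measures $\mathcal E(x_i,P_i)$, using the convexity of $D$ from Proposition~\ref{measure0} together with Lemmas~\ref{measures1} and~\ref{measures3}.

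Put $P=\bigsqcup_i P_i$, so that $|P|>(1-\tau)|F|$, and introduce the weights $a_i=|P_i|/|P|$. Since the $P_i$ are disjoint, $\mathcal E(z,P)=\sum_i a_i\mathcal E(z,P_i)$. We then split the distance by the triangle inequality:
\[
D(\mathcal E(z,F),\mu)\leq D(\mathcal E(z,F),\mathcal E(z,P))+D\Bigl(\sum_i a_i\mathcal E(z,P_i),\sum_i a_i\mathcal E(x_i,P_i)\Bigr)+D\Bigl(\sum_i a_i\mathcal E(x_i,P_i),\mu\Bigr).
\]

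The three terms are handled as follows.
\begin{enumerate}
\item By Lemma~\ref{measures1}, the first term is at most $|F\setminus P|/|F|<\tau$.
\item For the second term, Proposition~\ref{measure0} bounds it by $\sum_i a_i D(\mathcal E(z,P_i),\mathcal E(x_i,P_i))$. On each $P_i$ we apply Lemma~\ref{measures3} with $F'=\{s\in P_i:\rho(sz,sx_i)\le\varepsilon\}$. This gives $D(\mathcal E(z,P_i),\mathcal E(x_i,P_i))\le\varepsilon+\delta$, since $|P_i\setminus F'|\le\delta|P_i|$. Averaging over $i$, the second term is at most $\varepsilon+\delta$.
\item For the third term, write $\mu=\sum_i a_i\mu$ and apply Proposition~\ref{measure0} again. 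It is then at most $\sum_i a_i D(\mathcal E(x_i,P_i),\mu)<\eta$.
\end{enumerate}

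Summing the three bounds gives $D(\mathcal E(z,F),\mu)<\eta+\varepsilon+\delta+\tau$. The inequality is strict because the first and third bounds are strict.

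There is no real obstacle here. The only points needing care are these:
\begin{itemize}
\item Lemma~\ref{measures3} allows $F'$ to be empty, which covers the degenerate case where $z$ traces $x_i$ nowhere on $P_i$.
\item If $P=F$, Lemma~\ref{measures1} gives a first term of zero, which is still less than $\tau$ provided $\tau>0$. If $\tau$ could be $0$, the hypothesis $|P|>(1-\tau)|F|$ is impossible, so the statement is vacuous in that case.
\end{itemize}
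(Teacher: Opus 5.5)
Your proposal is correct and is essentially the paper's argument: apply Lemma~\ref{measures3} on each $P_i$, average with weights $|P_i|/|\bigcup_j P_j|$ using the convexity in Proposition~\ref{measure0}, and use Lemma~\ref{measures1} to absorb the uncovered part $F\setminus\bigcup_j P_j$. Your three-term splitting just spells out what the paper states in one line, and your bookkeeping of where the strict inequality comes from is correct.
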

\begin{proof}
Apply Lemma~\ref{measures3} on each $P_i$, take the convex combination
with weights $|P_i|/|\bigcup_jP_j|$, and apply Lemma~\ref{measures1}
to include the uncovered part of $F$.
\end{proof}

\begin{definition}

Let $F\in\F(G), \delta>0$ and $\varepsilon>0$. A subset $\Gamma \subset X$ is \emph{$(\delta,F,\varepsilon)$-separated} if for $x\neq y \in \Gamma,$
\[
\frac{|\{s\in F\mid \rho(sx,sy)>\varepsilon\}|}{|F|} \geq \delta.
\]
For $C\subset\M(X)$, define
\[
N(C;F,\varepsilon) := s(X_{F,C},F,\varepsilon)
\]
and
\[
N(C;\delta, F,\varepsilon) := \max\{|E|:E\subset X_{F,C}\text{ is }(\delta,F,\varepsilon)\text{-separated}\}.
\]
\end{definition}

With this convention, we have the following proposition.
\begin{proposition}[\cite{ren2023on}, Proposition 3.7]\label{mainseppro1}

Let $(F_n)$ be a F\o lner sequence and let $\mu\in\M_e(X,G)$.
For every finite $h^*<h_\mu(X,G)$, there are $\delta^*,\varepsilon^*>0$
such that, for each neighborhood $C$ of $\mu$ and all sufficiently large
$n$, there is a $(\delta^*,F_n,\varepsilon^*)$-separated set
$\Gamma_n\subset X_{F_n,C}$ satisfying
  \[
  \# \Gamma_n\geq e^{h^*|F_n|}.
  \]
\end{proposition}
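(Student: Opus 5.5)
The statement is cited from \cite{ren2023on}. I would prove it by a Katok-type counting argument combined with the pointwise ergodic theorem along a tempered subsequence. A preliminary remark: since $(F_n)$ from Lemma~\ref{lem:exhaustive-tempered} is increasing, exhaustive and tempered, Lindenstrauss' pointwise theorem applies. For a general F\o lner sequence one only needs convergence in measure of empirical measures, which follows from the mean ergodic theorem, and that is all the argument uses.

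\textbf{Step 1: a finite entropy source.} Fix $h^*<h'<h_\mu(X,G)$. Choose a finite Borel partition $\mathcal P=\{P_1,\dots,P_m\}$ with $h_\mu(G,\mathcal P)>h'$. By regularity, choose compact $Q_i\subset P_i$ with $\mu(P_i\setminus Q_i)$ tiny, and put $Q_0=X\setminus\bigcup Q_i$. Let $\varepsilon^*>0$ be smaller than half the minimal distance between the compact sets $Q_i$, $i\geq1$. Then two points whose $\mathcal P'$-names at $s$ (where $\mathcal P'=\{Q_0,\dots,Q_m\}$) are both different from $0$ and different from each other satisfy $\rho(sx,sy)>\varepsilon^*$ (strict, after slightly shrinking $\varepsilon^*$). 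Since $\mu(Q_0)$ is small, $h_\mu(G,\mathcal P')>h'$ as well, by the standard estimate $h(\mathcal P)\le h(\mathcal P')+H(\mathcal P|\mathcal P')$.

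\textbf{Step 2: typical names.} By the Shannon--McMillan--Breiman theorem for amenable groups (Ornstein--Weiss, in-measure version along any F\o lner sequence for ergodic $\mu$), for large $n$ a set $A_n$ of measure $>1/2$ consists of points whose $\mathcal P'^{F_n}$-atom has measure $\le e^{-h'|F_n|}$. By the ergodic theorem in measure, also $\mathcal E(x,F_n)\in C$ and $|\{s\in F_n:sx\in Q_0\}|\le 2\mu(Q_0)|F_n|$ on a set of measure $>1-1/4$. Let $B_n$ be the intersection of these sets; then $\mu(B_n)>1/4$, so $B_n$ meets at least $\tfrac14e^{h'|F_n|}$ distinct atoms of $\mathcal P'^{F_n}$.

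\textbf{Step 3: extracting a $(\delta^*,F_n,\varepsilon^*)$-separated set.} This is the main obstacle, because distinct names may differ at only a few coordinates or only at $Q_0$-coordinates. I would set $\delta^*=3\mu(Q_0)+\theta$ with small $\theta$, with $\mu(Q_0)$ and $\theta$ chosen so that $H_{\mathrm b}(\delta^*)+\delta^*\log(m+1)<h'-h^*$. Pick one point per atom and then take a maximal subfamily whose names pairwise differ, outside the $Q_0$-positions, on at least $\delta^*|F_n|$ coordinates. Maximality and a Hamming-ball count bound the neighbourhood of each chosen name by $\exp\bigl((H_{\mathrm b}(\delta^*)+\delta^*\log(m+1))|F_n|\bigr)$, up to the contribution of the $Q_0$-positions, which is absorbed in the same bound since they have density $\le2\mu(Q_0)$. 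Hence the subfamily has at least $e^{h^*|F_n|}$ points for large $n$. By Step 1, each differing non-$Q_0$ coordinate gives Bowen distance $>\varepsilon^*$, so the subfamily is $(\delta^*\!-\!2\mu(Q_0),F_n,\varepsilon^*)$-separated; renaming the constant gives $\delta^*$.

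Finally, the constants $\delta^*,\varepsilon^*$ depend only on $h^*$ and $\mu$, not on $C$: the neighbourhood $C$ enters only through the ergodic-theorem set in Step 2, which merely affects how large $n$ must be.
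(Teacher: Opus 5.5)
The paper gives no proof of this statement; it imports it from \cite{ren2023on}, the amenable-group version of Pfister--Sullivan's separated-set proposition. Your argument is correct and follows the standard proof of that result: approximate a finite partition by compact sets, apply Shannon--McMillan--Breiman and the ergodic theorem in measure, then take a maximal family controlled by a Hamming-ball count.

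The only repair is in the constants. A discarded name can also differ from its kept neighbour at up to $4\mu(Q_0)|F_n|$ further positions, namely those where one of the two points lies in $Q_0$. So impose $\Hb(\delta')+\delta'\log(m+1)<h'-h^*$ at the actual radius $\delta'=\delta^*+4\mu(Q_0)<1/2$, rather than at $\delta^*$. Also, since your family is defined by disagreements at positions where both points avoid $Q_0$, it is already $(\delta^*,F_n,\varepsilon^*)$-separated with nothing subtracted, so the final subtraction of $2\mu(Q_0)$ can be dropped.
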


\begin{lemma}\label{eslemma1}
Suppose $\Gamma\subset X_{F,\B(\mu,\xi)}$ is
$(\delta^*,F,\varepsilon^*)$-separated. If $J\subset F$ and
$|F\setminus J|<a|F|$ with $0<a<\min\{\delta^*/2,\xi,1\}$, then
$\Gamma$ is $(\delta^*/2,J,\varepsilon^*)$-separated and
$\Gamma\subset X_{J,B(\mu,2\xi)}$.
\end{lemma}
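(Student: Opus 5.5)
The proof splits into two independent verifications: a counting argument for separation on $J$, and an application of Lemma~\ref{measures2} for the empirical measures.

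\emph{Separation on $J$.} Fix distinct $x,y\in\Gamma$ and put $B=\{s\in F:\rho(sx,sy)>\varepsilon^*\}$. By hypothesis $|B|\geq\delta^*|F|$. Since
\[
B\cap J\supset B\setminus(F\setminus J),
\]
we obtain
\[
|B\cap J|\geq \delta^*|F|-|F\setminus J|>(\delta^*-a)|F|>\tfrac{\delta^*}{2}|F|\geq\tfrac{\delta^*}{2}|J|.
\]
The strict inequality uses $a<\delta^*/2$, and the last step uses $|J|\le|F|$. Dividing by $|J|$ gives
\[
|\{s\in J:\rho(sx,sy)>\varepsilon^*\}|/|J|\geq\delta^*/2,
\]
so $\Gamma$ is $(\delta^*/2,J,\varepsilon^*)$-separated. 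We should also record that $J\neq\varnothing$: since $a<1$, we have $|J|>(1-a)|F|>0$, so both the empirical measure over $J$ and the normalized ratio make sense.

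\emph{Empirical measures.} For $x\in\Gamma$ we have $D(\mathcal E(x,F),\mu)<\xi$. Because $J$ is nonempty, Lemma~\ref{measures2} applies with $F'=J$ and gives
\[
D(\mathcal E(x,J),\mu)<\xi+\frac{|F\setminus J|}{|F|}<\xi+a<2\xi,
\]
using $a<\xi$. Hence $x\in X_{J,\B(\mu,2\xi)}$.

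The only step needing any care is the nonemptiness of $J$, which the condition $a<1$ settles. The rest is direct counting plus Lemma~\ref{measures2}, so there is no real obstacle.
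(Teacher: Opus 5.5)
Your proposal is correct and follows the paper's own argument: the count $|B\cap J|>(\delta^*-a)|F|>\tfrac{\delta^*}{2}|J|$ for separation, then Lemma~\ref{measures2} with $|F\setminus J|/|F|<a<\xi$ for the empirical measures. The check that $J\neq\varnothing$ via $a<1$ is left implicit in the paper, and it is good that you state it.
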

\begin{proof}
At least $(\delta^*-a)|F|>(\delta^*/2)|J|$ separating coordinates
survive. Lemma~\ref{measures2} gives the empirical-measure assertion.
This is the trimming argument used in \cite[Lemma 2.6]{zhang2018topological}.
\end{proof}

\section{Almost entropy-approximability}\label{sec:almost}

In this section we prove Theorem~\ref{mainthm2}.  The proof follows the
zero-entropy tiling strategy of \cite{liu2026katok}, but the approximate product
property introduces an additional family of tracing-error sets.  The essential new
estimate is that these error sets have arbitrarily small exponential complexity.

\begin{lemma}\label{lem:trim-invariance}
Let $K,F\in\F(G)$, $J\subset F$, $0\leq a<1$, and $\zeta\geq0$.
If $|F\setminus J|\leq a|F|$ and $|\partial_KF|\leq\zeta|F|$, then
\[
\frac{|\partial_KJ|}{|J|}\leq\frac{\zeta+|K|a}{1-a}.
\]
\end{lemma}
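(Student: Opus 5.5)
The plan is to bound $|\partial_KJ|$ by a multiple of $|F|$ using a two-way case split, and then divide by $|J|$. First note that $J$ is nonempty: since $a<1$ and $F\neq\varnothing$, we have
\[
|J|\geq|F|-|F\setminus J|\geq(1-a)|F|>0,
\]
so the quotient in the statement makes sense. It then suffices to prove
\[
|\partial_KJ|\leq(\zeta+|K|a)|F|,
\]
because dividing this by $|J|\geq(1-a)|F|$ gives exactly the claimed inequality.

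Take $c\in\partial_KJ=\operatorname{Cl}_KJ\setminus\operatorname{Int}_KJ$. By definition $Kc\cap J\neq\varnothing$ and $Kc\not\subset J$. Since $J\subset F$, we get $Kc\cap F\neq\varnothing$, so $c\in\operatorname{Cl}_KF$. I split into two cases.
\begin{enumerate}
\item If $Kc\not\subset F$, then $c\notin\operatorname{Int}_KF$. Hence $c\in\partial_KF$, and the number of such $c$ is at most $|\partial_KF|\leq\zeta|F|$.
\item If $Kc\subset F$, then $Kc\not\subset J$ forces $Kc\cap(F\setminus J)\neq\varnothing$. Choosing $k\in K$ with $kc\in F\setminus J$ shows that $c\in K^{-1}(F\setminus J)$. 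The number of such $c$ is therefore at most $|K|\,|F\setminus J|\leq|K|a|F|$.
\end{enumerate}
Adding the two counts gives the required bound on $|\partial_KJ|$.

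There is no real obstacle here; the argument is a direct count. The only point needing care is the second case. The bound $|K^{-1}(F\setminus J)|\leq|K|\,|F\setminus J|$ is what produces the factor $|K|$, and it uses the right-translation convention for the centers $c$ fixed by the definitions of $\operatorname{Int}_K$ and $\operatorname{Cl}_K$.
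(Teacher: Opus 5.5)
Your proof is correct and is essentially the paper's argument. The paper does the same case analysis in one step, via the inclusion $\partial_KJ\subset\partial_KF\cup\bigcup_{k\in K}k^{-1}(F\setminus J)$, and then divides by $|J|\geq(1-a)|F|$; you also check explicitly that $J\neq\varnothing$.
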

\begin{proof}
If $c\in\partial_KJ\setminus\partial_KF$, then $Kc\subset F$ and
$Kc  \cap F\setminus J \neq \emptyset$. Consequently,
\[
\partial_KJ\subset\partial_KF\cup
\bigcup_{k\in K}k^{-1}(F\setminus J).
\]
Using $|J|\geq(1-a)|F|>0$, we  have
\[
\frac{|\partial_KJ|}{|J|}\leq\frac{\zeta+|K|a}{1-a}.
\]
\end{proof}

\begin{lemma}\label{lem:mistake-code}
For $0<\delta<1/2$ and every positive integer $N$,
\begin{equation}\label{eq:finite-mistake-count}
\sum_{j=0}^{\lfloor\delta N\rfloor}\binom Nj\leq e^{\Hb(\delta)N}.
\end{equation}
For a nonempty finite alphabet $Q$, the number of choices of at most
$\delta N$ sites and their $Q$-values is at most
$\exp((\Hb(\delta)+\delta\log|Q|)N)$.
\end{lemma}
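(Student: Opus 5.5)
The plan is the standard exponential-moment (Chernoff) argument for the binomial tail, followed by a direct count of site-value assignments.

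\emph{Binomial tail.} Put $m=\lfloor\delta N\rfloor$ and let $0<\delta<1/2$, so that $\delta/(1-\delta)<1$. For $0\leq j\leq m$ we have $j\leq\delta N$, and hence
\[
\Bigl(\frac{\delta}{1-\delta}\Bigr)^{j}\geq\Bigl(\frac{\delta}{1-\delta}\Bigr)^{\delta N}.
\]
Multiplying the sum by $\delta^{\delta N}(1-\delta)^{(1-\delta)N}$ therefore gives
\[
\delta^{\delta N}(1-\delta)^{(1-\delta)N}\sum_{j=0}^{m}\binom Nj
\leq\sum_{j=0}^{m}\binom Nj\delta^{j}(1-\delta)^{N-j}
\leq(\delta+1-\delta)^N=1,
\]
where the last step is the binomial theorem. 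The prefactor on the left equals $e^{-\Hb(\delta)N}$, so \eqref{eq:finite-mistake-count} follows. The only point needing care is that the comparison $\bigl(\delta/(1-\delta)\bigr)^j\geq\bigl(\delta/(1-\delta)\bigr)^{\delta N}$ uses $\delta<1/2$. This is exactly why the hypothesis $\delta<1/2$ is imposed, and it is the one place where the argument could go wrong. Nothing depends on $\delta N$ being an integer, since only the real exponent $\delta N$ appears.

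\emph{Alphabet count.} A choice of at most $\delta N$ sites together with their $Q$-values consists of a subset $A$ of the $N$ sites with $|A|=j\leq m$ and a map $A\to Q$. The total number of such choices is
\[
\sum_{j=0}^{m}\binom Nj|Q|^{j}\leq|Q|^{\delta N}\sum_{j=0}^{m}\binom Nj,
\]
using $|Q|\geq1$ and $j\leq\delta N$. Combining this with \eqref{eq:finite-mistake-count} bounds the count by $\exp\bigl((\Hb(\delta)+\delta\log|Q|)N\bigr)$, as claimed.

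If the intended reading is instead that every one of the $N$ sites carries a value, with the at most $\delta N$ chosen sites marked as mistakes, the same count applies after replacing $Q$ by $Q\cup\{\ast\}$, or by absorbing the unmarked sites into a fixed default symbol.
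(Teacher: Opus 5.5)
Your proof is correct and is essentially the paper's argument. It uses the termwise bound $\delta^j(1-\delta)^{N-j}\geq\delta^{\delta N}(1-\delta)^{(1-\delta)N}=e^{-\Hb(\delta)N}$ for $j\leq\delta N$ (via $\delta/(1-\delta)<1$), sums it against the binomial expansion of $(\delta+1-\delta)^N=1$, and then takes an extra factor of at most $|Q|^{\delta N}$ for assigning the values.
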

\begin{proof}
For $j\leq\delta N$, since $\delta/(1-\delta)<1$,
\[
\delta^j(1-\delta)^{N-j}
\geq\delta^{\delta N}(1-\delta)^{(1-\delta)N}
=e^{-\Hb(\delta)N}.
\]
Sum the corresponding terms in the binomial expansion of
$(\delta+1-\delta)^N=1$. Assigning values introduces at most
$|Q|^{\delta N}$ choices.
\end{proof}

\begin{lemma}\label{lem:admissible-subtiles}
Fix $K\in \F(G)$ with $e_G\in K$, $\eta_K>0$, and $0<\kappa<1/4$ such that
\begin{equation}\label{eq:trim-budget}
\frac{3|K|\kappa}{1-3\kappa}<\frac{\eta_K}{2}.
\end{equation}
Given a F{\o}lner sequence $(F_n)$ and a lower bound $L$ on
shape cardinalities and $N\geq 1$, there are indices $n_1<\cdots<n_t$ larger than $N$ 
and a zero-entropy exact tiling $\mathcal T$ with shape family $\mathcal S$
such that $|S|\geq L$ for all $S\in\mathcal S$, and every shape has
pairwise disjoint subtiles $J_{S,i,c}c\subset S$, $c\in C_{S,i}$, with
\begin{equation}\label{eq:good-cover-new}
\begin{gathered}
J_{S,i,c}\subset F_{n_i},\qquad
|J_{S,i,c}|>(1-3\kappa)|F_{n_i}|,\\
\left|\bigsqcup_{i,c}J_{S,i,c}c\right|>(1-4\kappa)|S|.
\end{gathered}
\end{equation}
Every $J_{S,i,c}$ is $(K,\eta_K)$-invariant. Moreover, after the finite
block family has been fixed, $\max_i|F_{n_i}|/|S|$ can be made arbitrarily
small for every final shape $S$.
\end{lemma}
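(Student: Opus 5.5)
We combine three earlier results: Proposition~\ref{oldtiling1} (the filling lemma), applied to each shape of a zero-entropy exact tiling; Remark~\ref{disjointquasi}, to make the quasitiles disjoint; and Lemma~\ref{lem:trim-invariance}, to verify that the trimmed pieces remain $(K,\eta_K)$-invariant.

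\textbf{Step 1: choosing the block family.} Let $t\geq t_0(\kappa)$, where $t_0$ comes from Proposition~\ref{oldtiling1}. Choose indices $N<n_1<\dots<n_t$ inductively along the F\o lner sequence so that three conditions hold.
\begin{enumerate}
\item $|\partial_{F_{n_j}}F_{n_i}|\leq\kappa^{2t}|F_{n_i}|$ for all $j<i$. This is possible because, for fixed finite $F_{n_j}$, the ratio $|\partial_{F_{n_j}}F_n|/|F_n|\to0$.
\item $|\partial_KF_{n_i}|\leq\zeta|F_{n_i}|$ for all $i$, with $\zeta>0$ small enough that
\[
\frac{\zeta+3|K|\kappa}{1-3\kappa}<\eta_K .
\]
Such a $\zeta$ exists by \eqref{eq:trim-budget}: that inequality already puts the $\kappa$-term below $\eta_K/2$, so we may take $\zeta<\eta_K(1-3\kappa)/2$.
\item $|F_{n_1}|$ is large, which will be used for the cardinality bound.
\end{enumerate}

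\textbf{Step 2: the tiling.} Apply Lemma~\ref{newtiling1} with the test set
\[
K'=H\cup\bigcup_{i\le t}F_{n_i}
\]
and tolerance $\eta'=\min\{\kappa^{2t},\theta\}$, where $|H|\geq L$ and $e\in H$. This yields a zero-entropy exact tiling whose shapes $S$ satisfy $|\partial_{F_{n_j}}S|\leq\kappa^{2t}|S|$ for every $j$. By the remark after Lemma~\ref{newtiling1}, the shapes also satisfy $|S|\geq L$.

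To make $\max_i|F_{n_i}|/|S|$ small, note that the block family is already fixed. We can therefore add to $H$ a set of cardinality exceeding $\max_i|F_{n_i}|/\theta$, which forces $|S|\geq|H|$. Some care is needed with the convention conversion in Lemma~\ref{newtiling1}, but since $K'$ and $\eta'$ are arbitrary, this only changes constants.

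\textbf{Step 3: quasitiling each shape.} By Proposition~\ref{oldtiling1}, each shape $S$ is $\kappa$-quasitiled by $F_{n_1},\dots,F_{n_t}$ with center sets $C_{S,i}$. Remark~\ref{disjointquasi} then gives pairwise disjoint sets $J'_{S,i,c}c\subset S$ with
\[
J'_{S,i,c}\subset F_{n_i},\qquad |J'_{S,i,c}|\geq(1-\kappa)|F_{n_i}|,\qquad \text{union}\geq(1-\kappa)^2|S|>(1-2\kappa)|S|.
\]

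This is where the main obstacle lies: the lemma demands the strict bound $|J|>(1-3\kappa)|F_{n_i}|$, whereas Remark~\ref{disjointquasi} only gives $\geq(1-\kappa)$. This has slack, so it is not a problem. To be safe, I would take the $J$'s to be the trimmed sets directly, $J_{S,i,c}=J'_{S,i,c}$, since $(1-\kappa)>1-3\kappa$.

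\textbf{Step 4: invariance of the pieces.} Apply Lemma~\ref{lem:trim-invariance} with $F=F_{n_i}$, $J=J_{S,i,c}$ and $a=3\kappa$ (valid because $|F\setminus J|\leq\kappa|F|\leq3\kappa|F|$). It gives
\[
\frac{|\partial_KJ|}{|J|}\leq\frac{\zeta+3|K|\kappa}{1-3\kappa}<\eta_K,
\]
so every $J_{S,i,c}$ is $(K,\eta_K)$-invariant. Right translation by $c$ preserves this by \eqref{eq:right-boundary}.

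The coverage bound $(1-\kappa)^2>1-4\kappa$ holds trivially. The extra slack in $3\kappa$ and $4\kappa$ could be used to discard pieces if needed, but it is not required.
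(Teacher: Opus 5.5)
Your proposal is correct and follows essentially the same route as the paper. Both arguments choose $\zeta$ from the trim budget, pick $F_{n_1},\dots,F_{n_t}$ satisfying the filling-lemma and $\zeta$-boundary conditions, take a zero-entropy exact tiling with large invariant shapes, quasitile each shape, disjointify via Remark~\ref{disjointquasi}, and verify invariance with Lemma~\ref{lem:trim-invariance}.

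The one minor difference is your test set $H\cup\bigcup_{i\le t}F_{n_i}$ for the tiling. Monotonicity of $\partial_K$ in $K$ then gives $|\partial_{F_{n_j}}S|\le\kappa^{2t}|S|$ directly, without needing $(F_n)$ to be increasing. The paper instead tests only against $F_{n_t}$ and relies on its auxiliary sequence being nested, which the lemma statement does not assume.
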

\begin{proof}
Choose $\zeta>0$ with
$(\zeta+3|K|\kappa)/(1-3\kappa)<\eta_K$, and fix
$t\geq t_0(\kappa)$ as in Proposition~\ref{oldtiling1}.
Since $(F_n)$ is increasing and F\o lner, choose
$F_{n_1},\ldots,F_{n_t}$ successively in the given tail such that
\[
N \leq n_1, \qquad|\partial_KF_{n_i}|<\zeta|F_{n_i}|,\qquad
|\partial_{F_{n_j}}F_{n_i}|<\kappa^{2t}|F_{n_i}|
\quad(1\leq j<i\leq t).
\]
At each stage there are only finitely many previously chosen test sets,
so all these inequalities can be imposed simultaneously.
Lemma~\ref{newtiling1} supplies a zero-entropy exact tiling with sufficiently
large, $(F_{n_t},\kappa^{2t})$-invariant shapes. Since
$F_{n_j}\subset F_{n_t}$, we have
$\partial_{F_{n_j}}S\subset\partial_{F_{n_t}}S$ for each shape $S$.
Thus all the hypotheses of Proposition~\ref{oldtiling1} hold.
Quasitile each shape and apply Remark~\ref{disjointquasi}.
The resulting estimates
\[
\begin{gathered}
|J_{S,i,c}|\geq(1-\kappa)|F_{n_i}|>(1-3\kappa)|F_{n_i}|,
\\
\left|\bigsqcup_{i,c}J_{S,i,c}c\right|
\geq(1-\kappa)^2|S|>(1-4\kappa)|S|
\end{gathered}
\]
prove \eqref{eq:good-cover-new}. Lemma~\ref{lem:trim-invariance} gives
the required $(K,\eta_K)$-invariance of every $J_{S,i,c}$.
The additional size requirement follows by choosing the final shapes
large relative to the now fixed block family.
\end{proof}

\begin{lemma}\label{lem:complete-local}
Let $\mathcal T$ be a finite-shape exact tiling with $e\in S$ for every
shape, and put $S_*=\bigcup_{S\in\mathcal S}S$. For every $\theta>0$, there is $m_0\geq 1$ such that for $m \geq m_0$ and every
 $g\in G$, the complete tiles contained
in $K_mg$ cover more than $(1-\theta)|K_m|$ points.
Moreover, if $Z\subset X$ satisfies
\[
D(\mathcal E(z,K_mg),\mu)\leq r
\quad \forall z\in Z,\ \forall  g\in G,\ \forall  m\geq m_0,
\]
then every $\nu\in\M(\overline{GZ},G)$ satisfies $D(\nu,\mu)\leq r$.

\end{lemma}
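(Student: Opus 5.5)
For the first assertion I will use a boundary argument. Put $S_*=\bigcup_{S\in\mathcal S}S$, a finite set containing $e$. Take a tile $Sc$ that meets $K_mg$ but is not contained in it. Pick $sc\in Sc\cap K_mg$ and $s'c\in Sc\setminus K_mg$. Since $e\in S$, the point $c$ is in $S^{-1}Sc\subset S_*^{-1}S_* c$.

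From this, every point $sc$ of a non-complete tile meeting $K_mg$ lies within distance $S_*S_*^{-1}$ of both $K_mg$ and its complement. More precisely, put $L=S_*S_*^{-1}\cup S_*^{-1}S_*\cup\{e\}$. Then $sc = (ss'^{-1})(s'c)$ is a point $u\in K_mg$ with $Lu\not\subset K_mg$. Hence $u\in\partial_L(K_mg)$ relative to the convention $\operatorname{Int}_L$.

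So the uncovered part of $K_mg$ has cardinality at most $|\partial_L(K_mg)|$. By \eqref{eq:right-boundary} this equals $|\partial_L K_m|$. Since $(K_m)$ is Følner, $|\partial_LK_m|/|K_m|\to0$ (the fact quoted from \cite[Lemma 9.4.8]{coornaert2015topological}). Choose $m_0$ with $|\partial_LK_m|<\theta|K_m|$ for $m\ge m_0$. This bound is uniform in $g$ precisely because of right-translation invariance. The first claim therefore only needs care in matching the left/right conventions; it should not be an obstacle.

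For the measure statement, fix $\nu\in\M(\overline{GZ},G)$. I will first reduce to ergodic $\nu$. The ergodic decomposition of $\nu$ is supported on ergodic measures of $\overline{GZ}$, and the closed ball $\{\lambda: D(\lambda,\mu)\le r\}$ is convex and closed, so the bound passes to $\nu$ by convexity (Proposition~\ref{measure0} in integral form).

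For ergodic $\nu$, I will use the pointwise ergodic theorem along a tempered subsequence of $(K_m)_{m\ge m_0}$, which exists by \cite[Proposition 1.4]{lindenstrauss2001pointwise}. This gives a point $y\in\overline{GZ}$ with $\mathcal E(y,K_{m_j})\to\nu$ weak$^*$. One subtlety is that the action is on the left, so the right convention for the ergodic averages must be matched; I expect to take Følner sets with the appropriate side, using $K_m$ directly for the averages $\frac1{|K_m|}\sum_{s\in K_m}\varphi(sy)$.

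Next I approximate $y$ by points $g_iz_i$ with $z_i\in Z$ and $g_i\in G$. For fixed $m$, the map $x\mapsto\mathcal E(x,K_m)$ is continuous. Moreover $\mathcal E(gz,K_m)=\mathcal E(z,K_mg)$, which is within $r$ of $\mu$ by hypothesis. By continuity and closedness, $D(\mathcal E(y,K_m),\mu)\le r$ for every $m\ge m_0$. Letting $m=m_j\to\infty$ gives $D(\nu,\mu)\le r$.

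With this observation the ergodic reduction may even be unnecessary. Averaging $\mathcal E(y,K_m)$ over $y$ with respect to $\nu$ gives $\int\mathcal E(y,K_m)\,d\nu(y)=\nu$ by invariance. Since $D(\cdot,\mu)\le r$ holds pointwise on the convex closed ball, $\nu$ lies in that ball. I will use this simpler invariance argument, which avoids the ergodic theorem entirely. The only point to check is the integral version of convexity of $D$, which follows termwise from \eqref{metric1}.
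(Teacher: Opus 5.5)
Your proposal is correct and follows the paper's proof. The first part is a boundary count made uniform in $g$ by right-translation invariance of $\partial$. The second part uses continuity of $x\mapsto\mathcal E(x,K_m)$ on $\overline{GZ}$, the identity $\int\mathcal E(x,K_m)\,d\nu(x)=\nu$, and convexity of $D$; the ergodic-theorem detour you set aside is indeed unnecessary. The only cosmetic difference is that you place each uncovered point of $K_mg$ directly in $\partial_L(K_mg)$ with $L\supset S_*S_*^{-1}$. The paper instead places the centers of crossing tiles in $\partial_{S_*}(K_mg)$ and multiplies by $|S_*|$. Both bounds are $o(|K_m|)$ uniformly in $g$.
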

\begin{proof}
A tile crossing the boundary of $K_mg$ has its center in
$\partial_{S_*}(K_mg)$. Its size is at most $|S_*|$.
The size of the uncovered part is bounded by
\[
|S_*|\,|\partial_{S_*}(K_mg)|=o(|K_m|),
\]
uniformly in $g$, by \eqref{eq:right-boundary}.
For localization, use
$\mathcal E(gz,K_m)=\mathcal E(z,K_mg)$ and continuity to pass the
estimate to $\overline{GZ}$. Invariance of the measure $\nu$ gives
\[
\int\mathcal E(x,K_m)\,d\nu(x)=\nu.
\]
Convexity of $D$ then yields
\[
D(\nu,\mu)\leq\int D(\mathcal E(x,K_m),\mu)\,d\nu(x)\leq r.
\]
\end{proof}

\subsection{The construction for an ergodic measure}

\begin{proposition}\label{mainprop1}
Assume that $(X,G)$ has the approximate product property.  Let
$\mu_0\in\M_e(X,G)$ with $h_{\mu_0}(X.G)>0$ and
\[
0<h_0<h_{\mu_0}(X,G).
\]
For every $\eta_0,\beta_0,\gamma_0>0$ there exist
$\gamma\in(0,\gamma_0)$ and a nonempty compact $G$-invariant set
$\Lambda\subset X$ such that
\begin{enumerate}
\item $\M(\Lambda,G)\subset B(\mu_0,\eta_0)$;
\item $h(\Lambda,G)>h_0$;
\item $h(\Lambda,G,\gamma)<h_0+\beta_0$.
\end{enumerate}
\end{proposition}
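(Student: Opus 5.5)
We construct $\Lambda$ as the orbit closure of points that trace, tile by tile along a zero-entropy exact tiling, orbit blocks drawn from large separated families of $\mu_0$-generic points. First fix $h_0<h^*<h_{\mu_0}(X,G)$ with $h^*$ finite, and take $\delta^*,\varepsilon^*$ from Proposition~\ref{mainseppro1} applied to $\mu_0$ along the auxiliary sequence $(F_n)$. Next choose a small mistake proportion $\delta<\delta^*/8$ and a tracing scale $\varepsilon<\varepsilon^*/4$, with $\varepsilon<\gamma_0$. Then put $(K,\eta_K)=m(\delta,\varepsilon)$ from Definition~\ref{APP}, and pick $\kappa$ satisfying \eqref{eq:trim-budget}, small compared with $\delta^*$, $\eta_0$ and $h^*-h_0$. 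Lemma~\ref{lem:admissible-subtiles} then gives blocks $F_{n_1},\dots,F_{n_t}$, a zero-entropy exact tiling $\mathcal T$, and $(K,\eta_K)$-invariant subtiles $J_{S,i,c}c$ covering more than $(1-4\kappa)|S|$ of each shape. For each $i$, Proposition~\ref{mainseppro1} with $C=\B(\mu_0,\xi)$ gives a $(\delta^*,F_{n_i},\varepsilon^*)$-separated set $\Gamma_{n_i}$ of cardinality at least $e^{h^*|F_{n_i}|}$. We thin each $\Gamma_{n_i}$ to a subfamily $\Gamma_i'$ of cardinality about $e^{h'|F_{n_i}|}$, where $h_0<h'<h_0+\beta_0/2$. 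This thinning is what makes the upper bound possible. By Lemma~\ref{eslemma1}, the restrictions to the subtiles $J_{S,i,c}$ remain $(\delta^*/2,\cdot,\varepsilon^*)$-separated and $2\xi$-close to $\mu_0$.

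Let $x_{\mathcal T}$ be the tiling. Each subtile $J_{S,i,c}c'$, for a tile $Sc'$, receives a translated block point $x=(cc')^{-1}y$ with $y\in\Gamma_i'$, so that the orbit of $x$ over $J_{S,i,c}cc'$ reproduces the orbit of $y$ over $J_{S,i,c}$; this uses \eqref{eq:right-boundary}. The subtiles are pairwise disjoint, $(K,\eta_K)$-invariant right translates, so $\APP$ yields a point $z$ that $\delta$-traces all chosen blocks at scale $\varepsilon$. Let $Z$ be the set of all such $z$ over all choices, and also over all tilings in $X_{\mathcal T}$, which makes the construction shift-compatible. Set $\Lambda=\overline{GZ}$.

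\emph{Measures.} For large $m$ and every window $K_mg$, complete tiles cover most of $K_mg$ by Lemma~\ref{lem:complete-local}. Lemma~\ref{measures5} then gives $D(\mathcal E(z,K_mg),\mu_0)<2\xi+\varepsilon+\delta+4\kappa+\theta<\eta_0$. Lemma~\ref{lem:complete-local} transfers this bound to every $\nu\in\M(\Lambda,G)$, proving (1).

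\emph{Lower bound.} Tracing points built from distinct choices of blocks are $(K_m,\varepsilon^*/2)$-separated. Indeed, on a subtile where the choices differ, the blocks disagree at scale $\varepsilon^*$ on a $\delta^*/2$ fraction of sites. At most $2\delta$ of those sites are mistakes for either tracing point, so at some remaining site the two tracing points are more than $\varepsilon^*-2\varepsilon$ apart. Counting the choices on the tiles inside $K_m$ gives exponential growth at rate at least $h'(1-4\kappa)(1-\theta)>h_0$. Hence $h(\Lambda,G)\ge h(\Lambda,G,\varepsilon^*/2)>h_0$, proving (2).

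\emph{Upper bound at scale $\gamma$.} We take $\gamma=2\varepsilon$. We expect this to be the main obstacle. We cover $\Lambda$ at Bowen scale $\gamma$ on $K_m$ by coding each point with four pieces of data:
\begin{enumerate}
\item the tiling pattern on $K_m$, which costs $e^{o(|K_m|)}$ choices since $h(\mathcal T)=0$;
\item the block choice on each subtile, which costs at most $e^{h'|K_m|}$;
\item the mistake sites together with their values in a finite alphabet $Q$, namely a $\varepsilon$-cover of $X$;
\item the values at the uncovered sites, again in $Q$.
\end{enumerate}
Lemma~\ref{lem:mistake-code} bounds item (3) by $\exp((\Hb(\delta)+\delta\log|Q|)|K_m|)$. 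Item (4) covers at most a $(4\kappa+\theta)$-fraction of sites and costs at most $\exp((\Hb(4\kappa+\theta)+(4\kappa+\theta)\log|Q|)|K_m|)$. There is a circularity here, because $Q$ depends on $\varepsilon$ while $\delta$ and $\kappa$ must then be small relative to $\log|Q|$. We break it by fixing $\varepsilon$ first, hence $Q$, and only then choosing $\delta$ and afterwards $\kappa$, since $K$ and $\eta_K$ depend on $(\delta,\varepsilon)$ but $\kappa$ may be chosen last. The coding shows that any two points of $Z$ with the same code are within $2\varepsilon$ at every site of $K_m$; points in the closure are handled by approximation with the non-strict inequalities. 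This gives $h(\Lambda,G,\gamma)\le h'+\beta_0/2<h_0+\beta_0$, proving (3). The remaining delicacy is the consistency of the $(K_m,\gamma)$-separation convention with a cover at radius $\varepsilon$, which we handle by counting at scale $\gamma/2$ and relabeling $\gamma$ accordingly.
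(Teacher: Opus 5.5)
Your proposal is correct and follows essentially the same route as the paper. The shared ingredients are the parameter order ($\varepsilon$, then $Q$, then $\delta$, then the APP data, then $\kappa,\theta$), trimmed subtiles from Lemma~\ref{lem:admissible-subtiles} carrying thinned separated label sets, the same localization and separation arguments, and the same four-part code in which the mistake component costs $\Hb(\delta)+\delta\log|Q|$. The differences are minor. Letting the tiling range over all of $X_{\mathcal T}$ makes the set of tracing points $G$-invariant and, by the non-strict inequalities, closed, so you can code directly on $K_m$ rather than on $K_mg$ through the paper's enlarged window $L_m$. Two small constant slips need fixing, and both are absorbed by the parameter choices. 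Thinning per block to $e^{h'|F_{n_i}|}$ inflates the label count by a factor $1/(1-3\kappa)$ in the exponent, which taking $\kappa$ small handles. Taking $\gamma=2\varepsilon$ requires $\varepsilon<\gamma_0/2$.
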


\begin{proof}
We give the parameter choices first, so that no previously fixed tracing
or tiling data need to be changed later.

%\medskip\noindent\textbf{Step 1. Parameters and generic labels.}
Put $\eta=\eta_0/20$. Choose $\beta>0$ with
\begin{equation}\label{eq:beta-choice-new}
20\beta<\min\{\beta_0,h_{\mu_0}(X,G)-h_0,h_0\},
\end{equation}
where the middle bound is omitted if $h_{\mu_0}(X,G)=\infty$.
Set $h_1=h_0+5\beta$ and $h^*=h_0+10\beta$.
Proposition~\ref{mainseppro1}, along the auxiliary sequence $(F_n)$,
gives constants $\delta^*,\varepsilon^*>0$ and, for all sufficiently
large $n$, sets
\begin{equation}\label{eq:generic-labels-new}
\Gamma_n^*\subset X_{F_n,B(\mu_0,\eta)},\qquad
|\Gamma_n^*|\geq e^{h^*|F_n|},
\end{equation}
which are $(\delta^*,F_n,\varepsilon^*)$-separated.
Choose
\begin{equation}\label{eq:Delta-choice-new}
0<\varepsilon<\min\{\varepsilon^*/20,\eta,\gamma_0/5\}.
\end{equation}
Let $Q\subset X$ be a finite $\varepsilon/4$-spanning set with the minimum cardinality. 
Next choose $0<\delta<\min\{1/2,\delta^*/10,\eta\}$ with
\begin{equation}\label{eq:delta-choice-new}
\Hb(\delta)+\delta\log \#Q<\beta,
\end{equation}
and obtain $m(\delta,\varepsilon)=(K_{\rm APP},\eta_{\rm APP})$.
Finally choose $0<\kappa<1/4$ and $\theta>0$ such that
\begin{align}
&3\kappa<\min\{\delta^*/2,\eta\},\qquad
\frac{3|K_{\rm APP}|\kappa}{1-3\kappa}<\frac{\eta_{\rm APP}}2,
\label{eq:main-trim-choice}\\
&\theta+4\kappa<\eta,\qquad
(\theta+4\kappa)\log \#Q<\beta,
\label{eq:bad-parameters-new}\\
&h_1(1-\theta)(1-4\kappa)>h_0.
\label{eq:low-parameters-new}
\end{align}
These choices are possible because $\delta$ and the APP data are already
fixed before $\kappa$ is chosen.

%\medskip\noindent\textbf{Step 2. The compact set of tracing points.}
Apply Lemma~\ref{lem:admissible-subtiles} with
$(K,\eta_K)=(K_{\rm APP},\eta_{\rm APP})$. Choose its block tail
large enough that \eqref{eq:generic-labels-new} holds and
\[
e^{\beta(1-3\kappa)|F_n|}>2
\]
throughout that tail. By Lemma~\ref{eslemma1}, each $\Gamma_{n_i}^*$
remains $(\delta^*/2,J_{S,i,c},\varepsilon^*)$-separated, and its
empirical measures on $J_{S,i,c}$ lie in $\B(\mu_0,2\eta)$.
Choose $\Gamma_{S,i,c}\subset\Gamma_{n_i}^*$ of cardinality
$\lceil e^{h_1|J_{S,i,c}|}\rceil$. The tail condition and
$h_1+\beta<h^*$ ensure both its existence and
\begin{equation}\label{eq:label-cardinality-new}
e^{h_1|J_{S,i,c}|}\leq|\Gamma_{S,i,c}|
<e^{(h_1+\beta)|J_{S,i,c}|}.
\end{equation}

Index the tiles by $Sd\in\mathcal T$ and set
\[
\Omega=\prod_{Sd\in\mathcal T}\prod_{i=1}^t
\prod_{c\in C_{S,i}}\Gamma_{S,i,c}.
\]
For $\omega\in\Omega$, apply $\APP$ on the pairwise disjoint blocks
$J_{S,i,c}cd$ to the points $d^{-1}c^{-1}\omega_{S,d,i,c}$.
Right invariance \eqref{eq:right-boundary} makes these blocks admissible.
Using the approximate product property, the set
\[
Z^\#(\omega)=\left\{z:
\begin{array}{l}
|\{u\in J_{S,i,c}:\rho(ucdz,u\omega_{S,d,i,c})>\varepsilon\}|\\
\hfill{}\leq\delta|J_{S,i,c}|\quad\text{for every }S,d,i,c
\end{array}\right\}
\]
is nonempty. The relation $R=\{(z,\omega):z\in Z^\#(\omega)\}$
is closed in the compact space $X\times\Omega$, since each block
condition is a finite union of intersections of closed conditions.
Consequently $Z^\#=\pi_X(R)$ is nonempty and compact. Put
\[
Y= \bigcup_{g\in G} g Z^\# = GZ^\#,\text{ and } \Lambda=\overline Y.
\]
Then $\Lambda$ is a nonempty, compact and $G$-invariant subset of $X$.

%\medskip\noindent\textbf{Step 3. Localization of invariant measures.}
For $z\in Z^\#$ choose an admissible labeling $\omega$ so that $z\in Z^{\#}(w)$.  On a good block
$P=Jcd$, where $J=J_{S,i,c}$ has label $x$, Lemma~\ref{measures3} gives
\[
\begin{aligned}
D(\mathcal E(z,P),\mu_0)
&\leq D(\mathcal E(cdz,J),\mathcal E(x,J))
 +D(\mathcal E(x,J),\mu_0)\\
&<\varepsilon+\delta+2\eta<4\eta.
\end{aligned}
\]
By Lemma~\ref{lem:complete-local}, the complete tiles in $K_mg$
cover more than $(1-\theta)|K_m|$ points, uniformly in $g$, for all
large $m$. Their good subtiles cover more than
$(1-\theta)(1-4\kappa)|K_m|$ points. Convexity and
Lemma~\ref{measures1} therefore yield
\begin{equation}\label{eq:uniform-empirical-localization}
D(\mathcal E(z,K_mg),\mu_0)<4\eta+\theta+4\kappa<5\eta
\end{equation}
uniformly in $z,g$ and sufficiently large $m$.
By Lemma~\ref{lem:complete-local}, we have
$D(\nu,\mu_0)\leq5\eta<\eta_0$ for every
$\nu\in\M(\Lambda,G)$. Thus we  prove  item (1).

%\medskip\noindent\textbf{Step 4. Entropy lower bound.}
Let $\Lambda_{K_m}=\{Sd\in\mathcal T:Sd\subset K_m\}$. We call each such $Sd$ good tile of $K_m$. On the good
subtiles of these complete tiles, choose all labels independently, keeping
a fixed reference label on every other block. For each such labeling
$\xi$, choose a tracing point $z_\xi\in Z^\#$.
If $\xi\ne\xi'$, they differ on some good block $Jcd$. Their distinct
labels $x,x'$ are separated on at least $(\delta^*/2)|J|$ coordinates.
The two tracing points together have at most $2\delta|J|$ mistakes there.
Since $2\delta<\delta^*/2$, a separating non-mistake coordinate $u\in J$
remains, and
\[
\rho(ucdz_\xi,ucdz_{\xi'})>
\varepsilon^*-2\varepsilon>\varepsilon.
\]
As $ucd\in K_m$, the points $z_\xi$ form an
$(K_m,\varepsilon)$-separated set. Thus
\[
\begin{split}
\log s(\Lambda,K_m,\varepsilon)
&\geq\sum_{Sd\in\mathcal I_m}\sum_{i,c}\log|\Gamma_{S,i,c}|\\
&\geq h_1\sum_{Sd\in\mathcal I_m}\sum_{i,c}|J_{S,i,c}|\\
&>h_1(1-\theta)(1-4\kappa)|K_m|
\end{split}
\]
for every sufficiently large $m$. Consequently,
\[
h(\Lambda,G)\geq h(\Lambda,G,\varepsilon)
\geq h_1(1-\theta)(1-4\kappa)>h_0.
\]
Thus we prove item (2).

%\medskip\noindent\textbf{Step 5. Finite-scale entropy upper bound.}
Set $\gamma=5\varepsilon<\gamma_0$. For each $y\in Y$, fix a
representation $y=g_yz_y$ and an admissible labeling for $z_y$.
We will encode its orbit on $K_m$ in four parts.

%\smallskip\emph{Tiling patterns.} 
The ideas in this part are from \cite{liu2026ergodic}.
Recall that $\mathcal S$ is the finite family of shapes of the fixed
tiling $\mathcal T$. Put
\[
S_*=\bigcup_{S\in\mathcal S}S,
\qquad L_m=K_m\cup S_*^{-1}K_m.
\]
The set $S_*$ is finite and independent of $m$. Since $(K_m)$ is a
 F\o lner sequence, we have
\[
0\leq\frac{|L_m\setminus K_m|}{|K_m|}
\leq\sum_{s\in S_*}\frac{|s^{-1}K_m\setminus K_m|}{|K_m|}
\longrightarrow0.
\]
Consequently $|L_m|/|K_m|\to1$. Moreover, for every fixed $a\in G$,
\[
\begin{aligned}
	|aL_m\triangle L_m|
	&\leq |aL_m\triangle aK_m|
	+|aK_m\triangle K_m|+|K_m\triangle L_m|\\
	&=2|L_m\setminus K_m|+|aK_m\triangle K_m|.
\end{aligned}
\]
Dividing by $|L_m|\geq|K_m|$ shows that $(L_m)$ is also a 
F\o lner sequence.

Let $\mathcal A=\mathcal S\cup\{0\}$ be the alphabet of the center
configuration $x_{\mathcal T}$. For $g\in G$, define
\[
p_m(g)\in\mathcal A^{L_m},\qquad
p_m(g)(a)=x_{\mathcal T}(ag)\quad(a\in L_m).
\]
Thus all patterns are indexed by the same finite set $L_m$, even though
they describe the different windows $L_mg$ in the original tiling.
The value $p_m(g)(a)=S\in\mathcal S$ means that $Sag$ is a tile of
$\mathcal T$; the value $0$ means that $ag$ is not a tile center.

We enlarge $K_m$ to $L_m$ because the center of a tile meeting $K_mg$
need not belong to $K_mg$. Indeed, if $Sd\cap K_mg\ne\varnothing$,
then $sd=kg$ for some $s\in S$ and $k\in K_m$, and hence
\[
dg^{-1}=s^{-1}k\in S^{-1}K_m\subset L_m.
\]
Therefore the center and shape of every such tile are recorded by
$p_m(g)$. More precisely, after right multiplication by $g^{-1}$, the
tiles meeting the window are exactly
\[
\{Sa:a\in L_m,\ p_m(g)(a)=S\in\mathcal S,
\ Sa\cap K_m\ne\varnothing\}.
\]
In particular, the same pattern also identifies the complete tiles,
which are precisely those satisfying $Sa\subset K_m$.

For each shape $S$, the sets $J_{S,i,c}$ and the centers
$c\in C_{S,i}$ were fixed before the labeling was chosen. Thus, if
$p_m(g)(a)=S$, the good subtiles in the tile $Sag$ are
$J_{S,i,c}cag$, and their coordinates relative to $K_mg$ are
$J_{S,i,c}ca$. It follows that $p_m(g)$ determines all these relative
subtile locations, including which lie in complete tiles. In particular,
two translations giving the same pattern give exactly the same
geometric domains for the subsequent parts of the code. The pattern
records these domains, while the orbit labels on them will be recorded
separately.

It remains to bound the number of possible patterns. By the shift
convention $(gx)(a)=x(ag)$,
\[
p_m(g)=(gx_{\mathcal T})|_{L_m}.
\]
Let $X_{\mathcal T}=\overline{Gx_{\mathcal T}}$ and set
$N_m=\#\{p_m(g):g\in G\}$. Since the alphabet $\mathcal{A}$ is finite, every
pattern on $L_m$ appearing in $X_{\mathcal T}$ already appears in
some translate of $x_{\mathcal T}$: the cylinder specifying that
finite pattern is open and meets the dense orbit $Gx_{\mathcal T}$.
Hence
\[
N_m=\#\{x|_{L_m}:x\in X_{\mathcal T}\}.
\]
By the symbolic pattern-count formula along the F\o lner sequence $(L_m)$,
tand since$h(X_{\mathcal T},G)=h(\mathcal T)=0$, we
\[
\lim_{m\to\infty}\frac{\log N_m}{|L_m|}=0.
\]
Since $|L_m|/|K_m|\to1$, it follows that
\[
\frac{\log N_m}{|K_m|}
=\frac{|L_m|}{|K_m|}\frac{\log N_m}{|L_m|}
\longrightarrow0.
\]
Therefore, for the previously fixed $\beta>0$, there exists $m_0$
such that
\begin{equation}\label{eq:phase-number-new}
	\#\{p_m(g):g\in G\}\leq e^{\beta|K_m|}
	\qquad(m\geq m_0).
\end{equation}
This estimates all translations simultaneously; in particular, the
threshold $m_0$ is independent of the chosen point $y$ and its
representative $g_y$.
For $y = g_y z_y$, define
\[
\pi^1_m(y) := p_m(g_y).
\]
and $Im(\pi^1_m) := \{\pi^1_m(y) : y\in Y\}$.

For fixed $p_m(g_y)$, record the labels on good subtiles inside complete
tiles of $K_mg_y$ by $(S,d,i,c)$. For $p\in Im(\pi^1_m)$, let $Y(p)= \{y\in Y : \pi^1_m(y) = p\}$. Define
\[
\pi^2_m(y) :=  \prod_{S,d,i,c} \omega_{S,d,i,c} \in  \prod_{S,d,i,c} \Gamma_{{S,i,c}}
\]
Since these subtiles are disjoint, we have
\begin{equation}\label{eq:label-code-upper}
\#\{\pi^2_m(y) : y\in Y(p)\} \leq  \prod_{S,d,i,c}  \#\Gamma_{S,i,c}\leq e^{(h_1+\beta)|K_m|}.
\end{equation}

%\smallskip\emph{Mistake sets.} 
Fix $y = g_yz_y \in Y(p)$. On each good subtile $(S,d,i,c)$, record
\[
M_{S,d,i,c}(y)=\{u\in J_{S,i,c}:
\rho(ucdz_y,u\omega_{S,d,i,c})>\varepsilon\}.
\]
Its size is at most $\delta|J_{S,i,c}|$. Define
\[
\pi^3_m(y) := \prod_{S,d,i,c} M_{S,d,i,c} cd
\]
By the finite-$N$ estimate
\eqref{eq:finite-mistake-count}, 
\begin{equation}\label{eq:mistake-code-upper}
\# \{\pi^3_m(y) : y\in Y(p)\} \leq \prod_{S,d,i,c}e^{\Hb(\delta)|J_{S,i,c}|}
\leq e^{\Hb(\delta)|K_m|}.
\end{equation}
In particular, no asymptotic error is accumulated over the individual blocks.

%\smallskip\emph{Values at uncontrolled positions.} 
Fix $p \in Im(\pi^1_m)$ and $y = g_yz_y \in Y(p)$. Let $\Lambda_{K_m g_y} = \{Sd \in \mathcal{T} : Sd \subset K_mg_y\}$ and $\widetilde{K_mg_y} = \bigcup_{Sd\in \Lambda_{K_m g_y}} Sd$. Set $B_{\rm geo}(p) =K_m \setminus \widetilde{K_mg_y}$, which consists of the coordinates whose translates
in $K_mg_y$ lie outside the good subtiles of complete tiles. Uniformly in
the tiling pattern, we have
\[
|B_{\rm geo}(p)|\leq(\theta+4\kappa)|K_m|.
\]
Let $B_{Mist}(y) = \bigcup\limits_{S,d,i,c} M_{S,d,i,c}(y) c d$. Then $|B_{Mist}(p)| \leq \delta |K_m| $.
Let  $B_m(y)= B_{\rm geo}(p) \cup B_{Mist}(y).$ We have
\[
|B_m(y)|\leq(\theta+4\kappa+\delta)|K_m|.
\]
Recall that $Q$ is a finite $\varepsilon/4$-spanning set with the minimum cardinality and write $Q = \{q_1,\cdots,q_{\# Q}\}$. For each $s\in B_m(y)$, let $l_s := \min \{1\leq l \leq \# : \rho(sy, q_l) < \frac{\varepsilon}{4}\}$ and set 
\[
\pi^4_m(y)(s) := q_{l_s}.
\]
The domain is already determined by the pattern and mistake codes.
Thus the value code for uncontrolled  positions has at most
$|Q|^{(\theta+4\kappa+\delta)|K_m|}$ possibilities. Define
\[
\pi^4_m(y) := \prod_{s\in B_m(y)} \pi^4_m(y)(s) \in Q^{B_m(y)}.
\]
Then we have
\[
\# \{\pi^4_m(y): y\in Y(p)\}\leq \#Q^{(\theta+4\kappa+\delta)|K_m|}.
\]

For $y = g_y z_y$, let \[
\Pi_m(y)=\bigl(\pi^1_m(y)),\pi^2_m(y),\pi^3_m(y),\pi^4_m(y)\bigr)
\]
be the complete code, where the four components record, respectively,
the tiling pattern, the labels on complete good subtiles, the mistake
sets on those subtiles, and the $Q$-values at the uncontrolled sites.
We index the label and mistake components by the relative data
$(S,a,i,c)$, where $a=dg_y^{-1}$ for a complete tile $Sd\subset K_mg_y$.
Thus the corresponding subtile has coordinates $J_{S,i,c}ca\subset K_m$.
This convention allows codes belonging to different translations $g_y$
to be compared on the same observation window $K_m$.

For each fixed pattern, the number of label codes is at most
$e^{(h_1+\beta)|K_m|}$. For each fixed pattern and label code, the
number of mistake codes is at most $e^{\Hb(\delta)|K_m|}$.
Once these components have been fixed, the set of uncontrolled
coordinates $B_m(y)$ is determined, and its cardinality is at most
$(\theta+4\kappa+\delta)|K_m|$. Thus the number of value codes over
each such choice is at most $|Q|^{(\theta+4\kappa+\delta)|K_m|}$.
Multiplying these uniform upper bounds, together with the pattern bound
\eqref{eq:phase-number-new}, gives
\[
\begin{aligned}
	\#\{\Pi_m(y):y\in Y\}
	&\leq e^{\beta|K_m|}e^{(h_1+\beta)|K_m|}
	e^{\Hb(\delta)|K_m|}
	\#Q^{(\theta+4\kappa+\delta)|K_m|}\\
	&=\exp\!\Bigl(\bigl[h_1+2\beta
	+\Hb(\delta)+\delta\log\#Q\\
	&\hspace{42mm}+(\theta+4\kappa)\log \#Q\bigr]|K_m|\Bigr).
\end{aligned}
\]
The two parameter choices \eqref{eq:delta-choice-new} and
\eqref{eq:bad-parameters-new} imply
\[
\Hb(\delta)+\delta\log\#Q<\beta,
\qquad (\theta+4\kappa)\log\#Q<\beta.
\]
Consequently, for all sufficiently large $m$,
\begin{equation}\label{eq:total-code-number-new}
	\#\{\Pi_m(y):y\in Y\}\leq e^{(h_1+4\beta)|K_m|}.
\end{equation}
This multiplication only uses uniform bounds on the number of extensions
of each partial code; independence of the four components is not needed.

Suppose $y_1,y_2\in Y$ satisfy $\Pi_m(y_1)=\Pi_m(y_2)$, and write
$y_j=g_jz_j$ using their fixed representations. Equality of the pattern
components identifies the complete tiles and good subtiles in relative
coordinates. Equality of the mistake components then implies
\[
B_m(y_1)=B_m(y_2)=:B.
\]
We compare the orbit coordinates at each $r\in K_m$.

If $r\in B$, equality of the value components gives a common point
$q_r\in Q$ such that
\[
\rho(ry_1,q_r)<\varepsilon/4,
\qquad \rho(ry_2,q_r)<\varepsilon/4.
\]
Hence the triangle inequality yields
\[
\rho(ry_1,ry_2)<\varepsilon/2.
\]

If $r\in K_m\setminus B$, the common pattern places $r$ in a good
subtile of a complete tile, and the common mistake code says that this
coordinate is not a mistake for either point. More explicitly, there
are common relative indices $(S,a,i,c)$ and a common
$u\in J_{S,i,c}$ such that
\[
r=uca.
\]
The corresponding tiles in the original tiling have centers
$d_j=ag_j$, and their good subtiles are $J_{S,i,c}cd_j$.
Equality of the label components supplies the same label
$w\in\Gamma_{S,i,c}$ on these two corresponding subtiles.
Since $u$ belongs to neither mistake set, the tracing inequalities give
\[
\rho(ucd_jz_j,uw)\leq\varepsilon
\qquad(j=1,2).
\]
Using the left-action convention and $d_j=ag_j$, we obtain
\[
ry_j=(uca)(g_jz_j)=ucd_jz_j.
\]
Thus both orbit coordinates are within $\varepsilon$ of the same
point $uw$, and
\[
\rho(ry_1,ry_2)
\leq\rho(ry_1,uw)+\rho(uw,ry_2)\leq2\varepsilon.
\]
The two cases exhaust $K_m$. Therefore
\[
\Pi_m(y_1)=\Pi_m(y_2)
\quad\Longrightarrow\quad
\rho_{K_m}(y_1,y_2)
=\max_{r\in K_m}\rho(ry_1,ry_2)\leq2\varepsilon.
\]

Let $E\subset Y$ be $(K_m,3\varepsilon)$-separated. Two distinct
points of $E$ cannot have the same code, since their Bowen distance
would then be at most $2\varepsilon$. Hence $\Pi_m|_E$ is injective,
and \eqref{eq:total-code-number-new} gives
\[
|E|\leq\#\{\Pi_m(y):y\in Y\}
\leq e^{(h_1+4\beta)|K_m|}.
\]
Taking the supremum over such sets $E$ yields
\[
s(Y,K_m,3\varepsilon)\leq e^{(h_1+4\beta)|K_m|}.
\]

Finally, let $E\subset\Lambda=\overline Y$ be a finite
$(K_m,5\varepsilon)$-separated set. Since $K_m$ is finite and the
action is continuous, for each $x\in E$ the set
\[
\{y\in\Lambda:\rho_{K_m}(x,y)<\varepsilon/2\}
\]
is a relative open neighborhood of $x$. By density of $Y$ in $\Lambda$,
choose $y_x\in Y$ in this neighborhood. If $x\ne x'$ belong to $E$,
the triangle inequality in the Bowen metric implies
\[
\begin{aligned}
	\rho_{K_m}(y_x,y_{x'})
	&\geq\rho_{K_m}(x,x')-\rho_{K_m}(x,y_x)
	-\rho_{K_m}(x',y_{x'})\\
	&>5\varepsilon-\varepsilon=4\varepsilon>3\varepsilon.
\end{aligned}
\]
Thus the points $y_x$ are distinct and form a
$(K_m,3\varepsilon)$-separated subset of $Y$ with cardinality $|E|$.
It follows that
\[
s(\Lambda,K_m,5\varepsilon)
\leq s(Y,K_m,3\varepsilon)
\leq e^{(h_1+4\beta)|K_m|}.
\]
Taking logarithms, dividing by $|K_m|$, and passing to the upper limit,
we conclude that
\[
h(\Lambda,G,5\varepsilon)\leq h_1+4\beta
=h_0+9\beta<h_0+\beta_0.
\]
Since $\gamma=5\varepsilon<\gamma_0$, this proves item (3).

Let $\Pi_m(y)$ be the resulting complete code. Combining these estimates
with \eqref{eq:delta-choice-new} and \eqref{eq:bad-parameters-new}, we get
\begin{equation}\label{eq:total-code-number-new}
\#\{\Pi_m(y):y\in Y\}\leq e^{(h_1+4\beta)|K_m|}.
\end{equation}
If two points have the same code, their orbit coordinates at every
uncontrolled site are within $\varepsilon/2$, and at every remaining
site they are within $2\varepsilon$, because both trace the same local
coordinate of the same label. Hence
\[
\Pi_m(x)=\Pi_m(y)\quad\Longrightarrow\quad
\rho_{K_m}(x,y)\leq2\varepsilon.
\]
It follows that
$s(Y,K_m,3\varepsilon)\leq e^{(h_1+4\beta)|K_m|}$.

Finally, any $(K_m,5\varepsilon)$-separated set in $\Lambda=\overline Y$
can be moved into $Y$ by less than $\varepsilon/2$ in the Bowen metric,
giving a $(K_m,3\varepsilon)$-separated set of the same cardinality.
Therefore
\[
h(\Lambda,G,5\varepsilon)\leq h_1+4\beta
=h_0+9\beta<h_0+\beta_0.
\]
This proves item (3).
\end{proof}

\begin{remark}\label{rem:upper-code}
Besides ensuring invariance after trimming, the main additional estimate
for approximate tracing occurs in Step 5. Exact tracing has no mistake-set
code. Under $\APP$
the additional normalized entropy is bounded by
\[
\Hb(\delta)+\delta\log|Q|,
\]
which tends to zero with $\delta$.
\end{remark}

By Proposition \ref{mainprop1}, we can generalize Theorem B in \cite{eizenberg1994large} to amenable group actions.

\begin{theorem}\label{continuitythm}
	Suppose that $(X,G)$ has the approximate product property and the entropy map $h_\mu(X,G) K \M(X,G) \mapsto \mathbb{R}$ is upper semicontinuous at the point $\nu \in \M(X,G)$. Then there exists a sequence a $G$-invariant ergodic measures $\{\mu_n\} $ such that
	\[
	\mu_n \overset{w}{\rightarrow} \nu \text{ and } h_{\mu_n}(X,G) \rightarrow h_{\nu}(X,G),
	\]
	where $ \overset{w}{\rightarrow} $ denotes the convergence in the weak topology of $\M(X)$.
\end{theorem}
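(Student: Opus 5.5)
We obtain the sequence in two stages: first approximate $\nu$ by ergodic measures of large entropy, then push these toward ergodic measures of almost the same entropy supported on the compact invariant sets $\Lambda$ of Proposition~\ref{mainprop1}. Upper semicontinuity at $\nu$ supplies the matching upper bound. Fix $n$. By upper semicontinuity at $\nu$, choose $r_n<1/n$ such that
\[
h_\mu(X,G)<h_\nu(X,G)+1/n\quad\text{for all }\mu\in\B(\nu,r_n).
\]
If $h_\nu(X,G)=0$, we only need ergodic measures close to $\nu$. Entropy-denseness (Proposition~\ref{propositionentropydenseb}) gives an ergodic $\mu_n\in\B(\nu,r_n)$. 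Its entropy lies in $[0,1/n)$, so $h_{\mu_n}(X,G)\to0$.

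Now assume $h_\nu(X,G)>0$. The value $h_\nu$ is finite, because upper semicontinuity at $\nu$ forces finite values on a neighborhood. Put $h_0=h_\nu(X,G)-1/n$, assuming $n$ is large enough that this is positive. By Proposition~\ref{propositionentropydenseb}, there is an ergodic $\mu_0\in\B(\nu,r_n/2)$ with $h_{\mu_0}(X,G)>h_0$. Apply Proposition~\ref{mainprop1} to $\mu_0$, $h_0$, and $\eta_0=r_n/2$, with any $\beta_0,\gamma_0$. This produces a compact invariant $\Lambda$ with $\M(\Lambda,G)\subset\B(\mu_0,r_n/2)\subset\B(\nu,r_n)$ and $h(\Lambda,G)>h_0$; only items (1) and (2) are needed here.

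Next apply the variational principle to the subsystem $(\Lambda,G)$, written as $h(\Lambda,G)=\sup_{\mu\in\M_e(\Lambda,G)}h_\mu$. Since $h(\Lambda,G)>h_0$, there is an ergodic $\mu_n$ on $\Lambda$ with $h_{\mu_n}(X,G)>h_0=h_\nu-1/n$. Measures ergodic for $\Lambda$ are ergodic for $X$, and entropy does not change under restriction. Because $\mu_n\in\M(\Lambda,G)\subset\B(\nu,r_n)$, the choice of $r_n$ gives $h_{\mu_n}(X,G)<h_\nu(X,G)+1/n$. Hence $D(\mu_n,\nu)<1/n$ and $|h_{\mu_n}-h_\nu|<1/n$, which is the desired sequence.

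The only genuinely delicate point is the lower bound. Entropy-denseness alone gives entropy close from below but no control from above, while upper semicontinuity alone controls only the upper side. Combining them is straightforward. In fact the pure entropy-denseness argument already suffices: choose the ergodic measure in $\B(\nu,r_n)$ with entropy greater than $h_\nu-1/n$. Proposition~\ref{mainprop1} is then optional rather than essential. I would present the direct argument via Proposition~\ref{propositionentropydenseb}, and mention the $\Lambda$-based route as the version that also yields measures supported on subsystems of controlled entropy. Two minor points need checking: finiteness of $h_\nu$ under upper semicontinuity, and the fact that entropy-denseness requires $h<h_\nu$, which holds when $h_\nu>0$.
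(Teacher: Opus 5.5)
Your proposal is correct and follows the paper's argument: a lower bound from approximating $\nu$ by ergodic measures of large entropy, and the matching upper bound from upper semicontinuity at $\nu$. You also fill in what the paper's one-line citation of Proposition~\ref{mainprop1} leaves implicit: since $\nu$ need not be ergodic, Proposition~\ref{propositionentropydenseb} is needed first, and an ergodic measure must then be extracted from $\Lambda$ by the variational principle. Your remark that entropy-denseness alone already gives the lower bound is right. One correction: finiteness of $h_\nu$ comes from the hypothesis that the entropy map is $\mathbb{R}$-valued, not from upper semicontinuity, which is automatic at a point where the value is $+\infty$. If $h_\nu=\infty$, entropy-denseness with targets $h=n$ gives the conclusion anyway.
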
 

\begin{proof}
	By Proposition \ref{mainprop1}, there is a sequence of $G$-invariant ergodic measures $\{\mu_n\}$ so that
	\[
	\mu_n  \overset{w}{\rightarrow} \nu \text{ and } \liminf\limits_{n\rightarrow \infty} h_{\mu_n}(X,G)\geq h_{\nu}(X,G).
	\]
	By the upper semi-continuity of the entropy map $h_\mu$, we have
	\[
	\limsup\limits_{n\rightarrow \infty} h_{\mu_n}(X,G)\leq h_{\nu}(X,G).
	\]
	Thus we have
	\[
	\mu_n  \overset{w}{\rightarrow} \nu \text{ and } \lim\limits_{n\rightarrow \infty} h_{\mu_n}(X,G) = h_{\nu}(X,G).
	\]
\end{proof}
\begin{proposition}\label{mainpropositon16r}
Assume that $(X,G)$ has the approximate product property and let
$\mu_0\in\M_e(X,G)$ with $h_{\mu_0}(X,G)=0$.  For every $\eta_0>0$ there
exists a nonempty compact $G$-invariant set $\Lambda\subset X$ such that
\[
\M(\Lambda,G)\subset \mathcal{B }(\mu_0,\eta_0).
\]
\end{proposition}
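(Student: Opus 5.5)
The plan is to repeat Steps 1--3 of the proof of Proposition~\ref{mainprop1}, keeping only the localization part. No entropy estimate is needed, so the separated sets $\Gamma$ can be replaced by a single generic orbit block. Put $\eta=\eta_0/20$ and fix $0<\varepsilon<\eta$ and $0<\delta<\min\{1/2,\eta\}$. Obtain $m(\delta,\varepsilon)=(K_{\rm APP},\eta_{\rm APP})$ from $\APP$. Then choose $0<\kappa<1/4$ and $\theta>0$ with $3\kappa<\eta$, \eqref{eq:trim-budget} for $(K,\eta_K)=(K_{\rm APP},\eta_{\rm APP})$, and $\theta+4\kappa<\eta$.

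Since $\mu_0$ is ergodic, the pointwise ergodic theorem along the tempered auxiliary sequence $(F_n)$ (Lindenstrauss) gives a point $x_0$ and an index $n_*$ with $D(\mathcal E(x_0,F_n),\mu_0)<\eta$ for all $n\geq n_*$. Alternatively, one could avoid the ergodic theorem by using the weaker statement that $\mu_0(X_{F_n,\B(\mu_0,\eta)})\to1$, which only needs the mean ergodic theorem. I would then apply Lemma~\ref{lem:admissible-subtiles} with block tail beyond $n_*$. This gives a zero-entropy exact tiling $\mathcal T$ and $(K_{\rm APP},\eta_{\rm APP})$-invariant subtiles $J_{S,i,c}\subset F_{n_i}$ with $|F_{n_i}\setminus J_{S,i,c}|<3\kappa|F_{n_i}|$. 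By Lemma~\ref{measures2}, $D(\mathcal E(x_0,J_{S,i,c}),\mu_0)<\eta+3\kappa<2\eta$. The zero entropy of $\mathcal T$ is irrelevant here; only exactness and finiteness of shapes matter.

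Next, apply $\APP$, in its infinite form, to the pairwise disjoint admissible blocks $J_{S,i,c}cd$, for $Sd\in\mathcal T$, with the points $d^{-1}c^{-1}x_0$. This yields a single point $z$. Admissibility of the translated blocks follows from \eqref{eq:right-boundary}. Set $\Lambda=\overline{Gz}$, which is nonempty, compact and invariant.

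For localization, the argument of Step~3 applies verbatim. On each good block $P=Jcd$, Lemma~\ref{measures3} gives $D(\mathcal E(z,P),\mu_0)<\varepsilon+\delta+2\eta<4\eta$. Lemma~\ref{lem:complete-local} ensures that complete tiles cover more than $(1-\theta)|K_mg|$ points uniformly in $g$ for large $m$, and their good subtiles cover more than a $(1-\theta)(1-4\kappa)$ proportion. Convexity (Proposition~\ref{measure0}) together with Lemma~\ref{measures1} then gives $D(\mathcal E(z,K_mg),\mu_0)<5\eta$ for all $g$ and all large $m$. The second part of Lemma~\ref{lem:complete-local}, applied with $Z=\{z\}$, yields $D(\nu,\mu_0)\leq5\eta<\eta_0$ for every $\nu\in\M(\Lambda,G)$.

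The only delicate point is the uniformity in $g$ of the empirical-measure estimate. It is handled exactly as before, because the tiling is exact with finitely many shapes and every tile carries the same block data.
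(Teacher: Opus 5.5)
Your proposal is correct and follows essentially the same route as the paper. Like the paper, you replace the label sets $\Gamma_{S,i,c}$ by a single generic point, glue with $\APP$ on the translated good subtiles, and reuse the Step~3 localization through Lemma~\ref{lem:complete-local}. The only cosmetic difference is that you take the orbit closure of one tracing point $z$ instead of $\overline{GZ^\#}$, which works equally well because Lemma~\ref{lem:complete-local} applies to $Z=\{z\}$.
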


\begin{proof}
Choose a point $x^*$ generic for $\mu_0$ along the auxiliary tempered
F\o lner sequence $(F_n)$. Fix small $\eta,\varepsilon,\delta$,
then obtain the APP data and choose $\kappa,\theta$ as in
\eqref{eq:main-trim-choice}--\eqref{eq:bad-parameters-new}, omitting
the separation and entropy conditions.  In the construction of Proposition~\ref{mainprop1}, use the
single label $x^*$ on every sufficiently large good block instead of the
sets $\Gamma_{S,i,c}$.  The APP gluing and the empirical-measure estimate in
Step 3 remain valid, while no entropy estimate is needed.  Taking
$\Lambda=\overline{GZ^\#}$ gives the conclusion.
\end{proof}

\subsection{Proof of Theorem~\ref{mainthm2}}

\begin{proof}[Proof of Theorem~\ref{mainthm2}]
Let $\mu\in\M(X,G)$ and let $U$ be a neighborhood of $\mu$.
If $h_\mu(X,G)=0$, choose $\eta_0>0$ with $B(\mu,2\eta_0)\subset U$.
By Proposition~\ref{propositionentropydenseb}, $\M_e(X,G)$ is dense in
$\M(X,G)$, so choose $\mu_0\in\M_e(X,G)$ with
$D(\mu,\mu_0)<\eta_0$.  Apply Proposition~\ref{mainprop1} if
$h_{\mu_0}>0$, and Proposition~\ref{mainpropositon16r} if
$h_{\mu_0}=0$.  In either case there is a nonempty compact invariant
$\Lambda$ with
\[
\M(\Lambda,G)\subset B(\mu_0,\eta_0)
\subset B(\mu,2\eta_0)\subset U.
\]

Now assume $h_\mu(X,G)>0$.  Fix
$h\in(0,h_\mu(X,G))$ and $\varepsilon,\beta>0$, and choose
$\eta_0>0$ with $B(\mu,2\eta_0)\subset U$.  Entropy-denseness gives
$\mu_0\in\M_e(X,G)$ satisfying
\[
D(\mu,\mu_0)<\eta_0,
\qquad h_{\mu_0}(X,G)>h.
\]
Apply Proposition~\ref{mainprop1} to
$\mu_0,h,\eta_0,\beta,\varepsilon$.  We obtain a compact invariant set
$\Lambda$ and $\gamma\in(0,\varepsilon)$ such that
\[
\M(\Lambda,G)\subset B(\mu_0,\eta_0),
\qquad h(\Lambda,G)>h,
\qquad h(\Lambda,G,\gamma)<h+\beta.
\]
The triangle inequality gives $\M(\Lambda,G)\subset U$, proving almost
entropy-approximability.
\end{proof}

\section{Entropy-approximability and entropy-genericity}\label{sec:exact}

In this section we upgrade almost entropy-approximability to
entropy-approximability under asymptotic entropy expansiveness. The argument is the
amenable-group analogue of the upgrading step in \cite{sun2025ergodic}. The only
additional input needed here is the local-entropy estimate recalled in
Proposition~\ref{asympexpansive2}.

\begin{proposition}\label{prop:almost-to-entropy}
Assume that $(X,G)$ is asymptotically entropy expansive. If
$\mu\in\mathcal M(X,G)$ is almost entropy-approximable, then it is
entropy-approximable.
\end{proposition}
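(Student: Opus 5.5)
The plan is to combine almost entropy-approximability with the local-entropy estimate of Proposition~\ref{asympexpansive2}, applied to the approximating subsystem itself. Fix a neighborhood $U$ of $\mu$, a value $h\in(0,h_\mu(X,G))$ and $\beta>0$. The case $h_\mu(X,G)=0$ needs no work, since both notions reduce to the same statement about $\M(\Lambda,G)\subset U$.

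First, I would use asymptotic entropy expansiveness to choose a scale $\varepsilon>0$ such that
\[
h_{\mathrm{loc}}^X(\varepsilon',G)<\beta/2\qquad\text{for all }0<\varepsilon'<\varepsilon.
\]
Some care is needed here: $h_{\mathrm{loc}}(\varepsilon,G)$ is monotone in $\varepsilon$ (smaller Bowen-ball radius gives fewer separated points), so the limit $\lim_{\varepsilon\to0}h_{\mathrm{loc}}(\varepsilon,G)=0$ yields this bound for every $\varepsilon'$ below some threshold. Even without monotonicity, the definition of the limit directly gives such an $\varepsilon$.

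Second, I would apply Definition~\ref{def:entropyapprox}(1) to $\mu$ with this $U$, this $h$, this $\varepsilon$, and the tolerance $\beta/2$ in place of $\beta$. This produces a nonempty compact invariant $\Lambda$ and a scale $\gamma\in(0,\varepsilon)$ such that
\[
\M(\Lambda,G)\subset U,\qquad h(\Lambda,G)>h,\qquad h(\Lambda,G,\gamma)<h+\beta/2.
\]

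Third, I would invoke the subsystem form of Proposition~\ref{asympexpansive2}, justified via Remark~\ref{rem:local-restriction}:
\[
h(\Lambda,G)\le h(\Lambda,G,\gamma)+h^X_{\mathrm{loc}}(\gamma,G)<h+\beta/2+\beta/2=h+\beta.
\]
Together with $h(\Lambda,G)>h$, this gives $h<h(\Lambda,G)<h+\beta$, which is exactly entropy-approximability.

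The main point requiring attention is that every finite-scale quantity must be computed along the same fixed observation sequence $\K$ that witnesses asymptotic entropy expansiveness. This matters because $h(\Lambda,G,\gamma)$ in the almost entropy-approximability statement and the local entropy in Proposition~\ref{asympexpansive2} must refer to the same Følner sequence. The paper's convention fixes $\K$ this way, and Theorem~\ref{mainthm2} was proved with respect to that same $\K$, so the two estimates combine directly. Apart from this bookkeeping, the argument is a direct combination of the definitions with the local-entropy inequality.
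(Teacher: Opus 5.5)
Your proposal is correct and follows the paper's proof essentially line by line. You choose a scale below which the ambient local entropy is under half the tolerance, apply almost entropy-approximability at that scale with half the tolerance, and finish with the subsystem form of Proposition~\ref{asympexpansive2}; the paper does the same, phrasing the halving as a choice of $\eta$ with $2\eta<\beta$.
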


\begin{proof}
If $h_\mu(X,G)=0$, the assertion follows from the convention in
Definition~\ref{def:entropyapprox}. Assume that $h_\mu(X,G)>0$. Let $U$ be a
weak$^*$ neighborhood of $\mu$, let $0<h<h_\mu(X,G)$, and let $\beta>0$.
Choose $\eta>0$ so small that $2\eta<\beta$.

Since $(X,G)$ is asymptotically entropy expansive, there exists
$\varepsilon_0>0$ such that
\begin{equation}\label{eq:aee-small-tail}
h_{\mathrm{loc}}^X(\gamma,G)<\eta
\qquad\text{for every }0<\gamma<\varepsilon_0.
\end{equation}
Apply almost entropy-approximability of $\mu$ with neighborhood $U$, target entropy
$h$, scale bound $\varepsilon_0$, and error $\eta$. We can find a nonempty compact
$G$-invariant set $\Lambda\subset X$ and some $\gamma\in(0,\varepsilon_0)$ such that
\begin{equation}\label{eq:aee-almost-appr}
\mathcal M(\Lambda,G)\subset U,
\qquad h(\Lambda,G)>h,
\qquad h(\Lambda,G,\gamma)<h+\eta.
\end{equation}
By Proposition~\ref{asympexpansive2},
\[
h(\Lambda,G)
\le h(\Lambda,G,\gamma)+h_{\mathrm{loc}}^X(\gamma,G)
<h+2\eta<h+\beta.
\]
Together with \eqref{eq:aee-almost-appr}, this proves
\[
\mathcal M(\Lambda,G)\subset U,
\qquad h<h(\Lambda,G)<h+\beta,
\]
so $\mu$ is entropy-approximable.
\end{proof}

\begin{remark}\label{rem:aee-upgrading}
The role of asymptotic entropy expansiveness in the preceding proposition is to make
the entropy hidden below sufficiently small Bowen scales uniformly negligible. No
separate upper-semicontinuity hypothesis on the entropy map is imposed.
\end{remark}

\begin{proposition}\label{prop:entropy-generic-abstract}
Suppose that $(X,G)$ is asymptotically entropy expansive and every measure in
$\mathcal M(X,G)$ is entropy-approximable. Then $(X,G)$ is entropy-generic.
\end{proposition}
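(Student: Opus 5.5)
Fix $0\le\alpha<h(X,G)$. By asymptotic entropy expansiveness the entropy map $\nu\mapsto h_\nu(X,G)$ is upper semicontinuous, and $h(X,G)<\infty$ by Remark~\ref{rem:local-restriction}. Hence $\M^\alpha(X,G)$ is closed in the compact metric space $\M(X,G)$, so it is a Baire space. The plan is to write $\M_e(X,G,\alpha)$ as a countable intersection of sets that are open and dense in $\M^\alpha(X,G)$.

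\textbf{Step 1: the $G_\delta$ structure.} Ergodic measures form a $G_\delta$ set $\bigcap_k\mathcal O_k$ in $\M(X,G)$. The set $\{\nu:h_\nu<\alpha+1/k\}$ is open by upper semicontinuity. Inside $\M^\alpha(X,G)$ we have $h_\nu\ge\alpha$ automatically. Therefore
\[
\M_e(X,G,\alpha)=\bigcap_{k\ge1}\Bigl(\mathcal O_k\cap\{\nu\in\M^\alpha(X,G):h_\nu<\alpha+1/k\}\Bigr),
\]
which exhibits $\M_e(X,G,\alpha)$ as a $G_\delta$ subset of $\M^\alpha(X,G)$. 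It then suffices to prove that $\M_e(X,G,\alpha)$ is dense in $\M^\alpha(X,G)$. Indeed, each set in the intersection contains $\M_e(X,G,\alpha)$, so each is open and dense.

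\textbf{Step 2: density.} Fix $\mu\in\M^\alpha(X,G)$ and a neighborhood $U$ of $\mu$. The aim is an ergodic $\nu\in U$ with $h_\nu=\alpha$ exactly. The plan is a nested construction, following \cite{sun2025ergodic}.

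First suppose $h_\mu>\alpha$. Use entropy-approximability of $\mu$ with $h\in(\alpha,h_\mu)$ close to $\alpha$ to obtain a compact invariant $\Lambda_1$ with $\M(\Lambda_1,G)\subset U_1$, where $\overline{U_1}\subset U$, and $h<h(\Lambda_1,G)<h+\beta_1$. The variational principle, upper semicontinuity, and the ergodic decomposition give an ergodic $\mu_1$ on $\Lambda_1$ with $h_{\mu_1}=h(\Lambda_1,G)>\alpha$. Then apply entropy-approximability to $\mu_1$, with a neighborhood $U_2$ satisfying $\overline{U_2}\subset U_1$ and of diameter less than $1/2$, and with a target closer to $\alpha$. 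Iterating produces ergodic $\mu_n\in U_n$ with $h_{\mu_n}\downarrow\alpha$ and $\mu_n\to\nu\in\bigcap_n\overline{U_n}$.

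Upper semicontinuity gives $h_\nu\ge\limsup h_{\mu_n}$? No, upper semicontinuity gives the reverse inequality $h_\nu\ge\limsup h_{\mu_n}$ only in the wrong direction. Instead, every $\mu_k$ with $k\ge n$ lies in $\M(\Lambda_n,G)$ if one chooses $U_{n+1}$ so that the later sets $\Lambda_k$ sit inside $\Lambda_n$. A cleaner route is to bound $h_\nu\le h(\Lambda_n,G)$ whenever $\nu$ is supported on $\Lambda_n$.

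\textbf{Main obstacle.} Two things are delicate: forcing $\nu$ to be ergodic, and obtaining the lower bound $h_\nu\ge\alpha$.

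For the lower bound, upper semicontinuity alone gives only $h_\nu\ge\limsup h_{\mu_n}$ in the wrong direction. I would therefore replace the limit argument by choosing $\Lambda_n$ so that every invariant measure on it is close to $\mu_{n-1}$. Then every measure on $\Lambda_n$ has entropy at most $h(\Lambda_n,G)$, which tends to $\alpha$. The lower bound $h_\nu\ge\alpha$ is then obtained by applying the closedness of $\M^\alpha(X,G)$ to the entropies $h_{\mu_n}\ge\alpha$ along the sequence. This is exactly where the nontrivial point lies, and I would try to get it by applying entropy-approximability inside the subsystem $\Lambda_n$.

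For ergodicity, I would first try to shrink $U_n$ into the open sets $\mathcal O_n$, so that $\nu\in\bigcap_n\mathcal O_n$ is ergodic.

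In the case $h_\mu=\alpha$, I would first use entropy-denseness (Proposition~\ref{propositionentropydenseb}) or an affine perturbation toward a high-entropy measure to move to a nearby measure of entropy strictly greater than $\alpha$. The hypothesis $\alpha<h(X,G)$ is what makes this possible.
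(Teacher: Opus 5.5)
Your Step 1 is correct, and so is the first stage of your iteration. The gap is the reduction at the end of Step 1. It commits you to proving density of $\M_e(X,G,\alpha)$ itself, i.e.\ producing ergodic measures of entropy \emph{exactly} $\alpha$ near every $\mu\in\M^\alpha(X,G)$. That is what Baire's theorem is supposed to deliver, and Step 2 never establishes it.

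You also have the difficulty backwards. The lower bound is immediate: upper semicontinuity gives $\limsup_n h_{\mu_n}\le h_\nu$ whenever $\mu_n\to\nu$ (equivalently, $\M^\alpha(X,G)$ is closed), so $h_\nu\ge\alpha$. What is missing is the upper bound $h_\nu\le\alpha$, since entropy can jump up in a weak$^*$ limit. Ergodicity of $\nu$ is also missing; you only announce it as something you would try. Your remedy for the upper bound does not work. Entropy-approximability gives no way to put $\Lambda_{n+1}$ inside $\Lambda_n$, and the limit $\nu$ need not be supported on any $\Lambda_n$. So the bound $h_\lambda\le h(\Lambda_n,G)$ for $\lambda\in\M(\Lambda_n,G)$ says nothing about $h_\nu$.

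The missing idea is to apply Baire to the open sets you already wrote down, $W_k=\mathcal O_k\cap\{\lambda\in\M^\alpha(X,G):h_\lambda<\alpha+1/k\}$, rather than to their intersection. It suffices that each $W_k$ be dense, and one stage of your iteration gives exactly that. Given $\mu\in\M^\alpha(X,G)$ and a neighborhood $U$ with $h_\mu=\alpha$, first replace $\mu$ by $(1-t)\mu+t\mu_X$ with $h_{\mu_X}>\alpha$. The affine perturbation is the right tool here: entropy-denseness only produces entropies above levels $h<h_\mu=\alpha$, and Proposition~\ref{propositionentropydenseb} assumes $\APP$ anyway. Then take $h\in(\alpha,\min\{h_\mu,\alpha+1/k\})$ and $\beta>0$ with $h+\beta\le\alpha+1/k$. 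An ergodic measure of maximal entropy on the resulting $\Lambda$ lies in $U$ and has entropy $h(\Lambda,G)\in(\alpha,\alpha+1/k)$, so it lies in $W_k\cap U$.

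If you prefer to keep the nested construction, the repair is to choose $\overline{U_{n+1}}\subset\mathcal O_n\cap\{\lambda:h_\lambda<\alpha+1/n\}$. This open set contains $\mu_n$ once $h_{\mu_n}<\alpha+1/n$, and the construction is then just Baire's theorem done by hand.

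The paper follows the same Baire scheme: it proves density of $\M_e(\alpha,\alpha')$ for each $\alpha'>\alpha$ and intersects over $\alpha'=\alpha+1/n$. It obtains density from an affine perturbation and Theorem~\ref{continuitythm}. The one-step use of entropy-approximability above needs neither $\APP$ nor the restriction $h_\mu<\alpha'$ that the paper's affine step implicitly requires.
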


\begin{proof}
Fix $0\le\alpha<h(X,G)$. By \cite{ren2016local}, asymptotic entropy expansiveness
implies upper semicontinuity of the entropy map. Hence
\[
\mathcal M^\alpha(X,G)=\{\mu:h_\mu(X,G)\ge\alpha\}
\]
is compact and is therefore a Baire space. For $\alpha'>\alpha$ let
\[
\mathcal M_e(\alpha,\alpha')
:=\{\nu\in\mathcal M_e(X,G):\alpha\le h_\nu(X,G)<\alpha'\}.
\]
The ergodic measures form a $G_\delta$ subset of $\mathcal M(X,G)$, and upper
semicontinuity makes $\{\nu:h_\nu(X,G)<\alpha'\}$ open. Thus
$\mathcal M_e(\alpha,\alpha')$ is a $G_\delta$ subset of
$\mathcal M^\alpha(X,G)$.

We prove density. Let $\mu\in\mathcal M^\alpha(X,G)$ and let $V$ be a neighborhood of
$\mu$. Take $\eta>0$ small such that $\mathcal{B}(\mu,2\eta) \subset V$. Since $\alpha<h(X,G)$, the variational principle gives
an invariant measure $\mu_X$ with $h_{\mu_X}(X,G)>\alpha$. For sufficiently small
$t>0$,
\[
\mu'=(1-t)\mu+t\mu_X
\]
belongs to $\mathcal{B}(\mu,2\eta)$, and affinity of entropy gives $\alpha < h_{\mu'}(X,G) < \alpha'$. By Theorem \ref{continuitythm}, there is an ergodic measure $\nu$ satisfying
\[
D(\nu, \mu') < \eta \text{ and } h_{\nu}(X,G) \in (\alpha, \alpha').
\]
Hence we have proved density.

Therefore $\mathcal M_e(\alpha,\alpha')$ is residual in
$\mathcal M^\alpha(X,G)$. Taking $\alpha'=\alpha+1/n$ and intersecting over $n\ge1$
gives
\[
\mathcal M_e(X,G,\alpha)
=\bigcap_{n=1}^{\infty}\mathcal M_e(\alpha,\alpha+1/n),
\]
which is residual in $\mathcal M^\alpha(X,G)$.
\end{proof}

\begin{proof}[Proof of Theorem~\ref{mainthm3}]
By Theorem~\ref{mainthm2}, every invariant measure is almost entropy-approximable.
Proposition~\ref{prop:almost-to-entropy} upgrades this to entropy-approximability under
asymptotic entropy expansiveness. Proposition~\ref{prop:entropy-generic-abstract} then
gives entropy-genericity.
\end{proof}

\begin{proof}[Proof of Theorem~\ref{mainthm1}]
Fix an entropy value $0\le\alpha<h(X,G)$.
Theorem~\ref{mainthm3} says that the corresponding ergodic entropy level set
is residual in the nonempty compact space $\mathcal M^\alpha(X,G)$.
It is therefore nonempty. Thus every entropy value strictly
below $h(X,G)$ is realized by an ergodic invariant measure.

Asymptotic entropy expansiveness implies upper semicontinuity of the entropy map on the
compact space $\mathcal M(X,G)$. Hence the entropy map attains its maximum. By the
variational principle this maximum equals $h(X,G)$. If a maximizing invariant measure
is not ergodic, affinity of entropy and ergodic decomposition imply that almost every
ergodic component is maximizing. Therefore the endpoint $h(X,G)$ is also realized by
an ergodic measure.
\end{proof}

\appendix
\section{Entropy-denseness under the approximate product property}\label{sec:appendix}
We prove Proposition~\ref{propositionentropydenseb} independently of
Theorem~\ref{mainthm2}. Only the geometric construction and entropy lower
bound for independently chosen labels are needed.

\begin{lemma}\label{lem:balanced-coloring}
In Lemma~\ref{lem:admissible-subtiles}, the final shapes can be chosen so
that their good subtiles admit colors $1,\dots,L$ with
\begin{equation}\label{eq:color-weights}
\left|w_i(S)-\frac1L\right|<5\kappa,
\qquad
w_i(S):=\frac1{|S|}\sum_{Jc\text{ of color }i}|J|.
\end{equation}
The uncolored remainder has proportion less than $4\kappa$.
\end{lemma}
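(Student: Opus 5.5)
Inside each final shape $S$ produced by Lemma~\ref{lem:admissible-subtiles}, I will color the good subtiles $J_{S,i,c}c$ greedily so that the color weights are balanced. The lemma only asks for a shape-by-shape statement, so no coordination between different tiles of $\mathcal T$ is needed.

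Fix $S$. By \eqref{eq:good-cover-new}, the disjoint good subtiles cover a proportion $W(S)>1-4\kappa$ of $S$. Each subtile has size at most $\max_i|F_{n_i}|$. The last assertion of Lemma~\ref{lem:admissible-subtiles} lets us choose the final shapes so that
\[
\max_i|F_{n_i}|<\frac{\kappa}{L}\,|S|
\]
for every shape $S$. This is legitimate because the block family $F_{n_1},\dots,F_{n_t}$ is fixed before the tiling shapes are chosen, and $L$ is a fixed number at that stage.

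Next I list the subtiles of $S$ in any order and assign each one, in turn, to a color class whose current total weight is minimal. This is the standard greedy balancing. At every step, the difference between the largest and smallest class totals is at most the weight $\max_i|F_{n_i}|/|S|<\kappa/L\le\kappa$ of a single subtile. This holds by induction: adding a subtile to a minimal class can only push that class above the previous maximum by at most one subtile weight.

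It follows that each $w_i(S)$ lies within $\kappa$ of the average $W(S)/L$. Since $0\le 1-W(S)<4\kappa$, we get
\[
\left|\frac{W(S)}{L}-\frac1L\right|<\frac{4\kappa}{L}\le4\kappa .
\]
Combining the two bounds gives $|w_i(S)-1/L|<5\kappa$, which is \eqref{eq:color-weights}. The uncolored remainder is exactly $S$ minus the good subtiles, so its proportion is $1-W(S)<4\kappa$.

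The only delicate point is the order of quantifiers. The sizes $|F_{n_i}|$ must already be fixed, and $L$ must be given, before we invoke the freedom to make the shapes large. Lemma~\ref{lem:admissible-subtiles} allows exactly this. The invariance and size properties of the subtiles are unaffected, because coloring changes no sets. Should a nonstrict greedy bound appear, it suffices to use $\max_i|F_{n_i}|<\kappa|S|/(2L)$ to keep every inequality strict.
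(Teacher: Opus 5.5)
Your proof is correct and follows the paper's argument. You use the last assertion of Lemma~\ref{lem:admissible-subtiles} to make every subtile small relative to $|S|$, assign subtiles greedily to a color of smallest current total, and combine the resulting bound $|w_i(S)-W/(L|S|)|<\kappa$ with $1-W/|S|<4\kappa$; your stronger requirement $\max_i|F_{n_i}|<\kappa|S|/L$ is harmless but unnecessary, since the paper's bound of $\kappa|S|$ on the largest subtile already suffices.
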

\begin{proof}

By Lemma~\ref{lem:admissible-subtiles}, we can require the largest subtile to have size less than $\kappa|S|$ in every
shape $S\in \mathcal{S}$. Assign the
subtiles successively to a color of smallest current total size. At every
stage, the largest and smallest totals differ by at most the largest
subtile size. If $W$ is the final covered size, then
$|w_i(S)-W/(L|S|)|<\kappa$. Since
$(1-4\kappa)|S|<W\leq|S|$, \eqref{eq:color-weights} follows.
\end{proof}

\begin{proof}[Proof of Proposition~\ref{propositionentropydenseb}]
Fix $\mu\in\M(X,G)$, $\eta_0>0$ and a finite $h_0<h_\mu(X,G)$.
Using the ergodic decomposition and the fact that the entropy is an affine function on $\M(X,G)$,
 it follows that with no loss of generality we may assume $\mu$ is a finite sum of ergodic measures $\mu_1,\cdots,\mu_L$, i.e.
 \[
 \mu = \frac{1}{L}\sum_{l=1}^{L}\mu_l,
 \]
 where the $\mu_l$'s are not necessarily distinct. We will finish the proof under this assumption. 
Choose finite $a_l\geq0$, strictly below $h_{\mu_l}(X,G)$ whenever
$h_{\mu_l}(X,G)>0$, and equal to zero otherwise, so that
\[
A:=\frac1L\sum_l a_l>h_0.
\]
Put $L_+=\{l:a_l>0\}$. For $l\in L_+$ apply
Proposition~\ref{mainseppro1} with target $a_l$, obtaining
$\delta_l^*,\varepsilon_l^*>0$. For $l\notin L_+$ choose one generic
point $x_l$ for $\mu_l$ along the F{\o}lner sequence $(F_n)$.

Choose $b>0$ with $2b<\eta_0/8$, then choose $0<\Delta,\delta<1$
so that $\Delta+\delta<\eta_0/8$ and
\[
\Delta<\frac14\min_{l\in L_+}\varepsilon_l^*,
\qquad 2\delta<\frac12\min_{l\in L_+}\delta_l^*.
\]
Obtain $m(\delta,\Delta)=(K_{\rm APP},\eta_{\rm APP})$ and only then
choose $\kappa,\theta>0$ satisfying
\begin{gather}
3\kappa<b,\quad 5L\kappa<1,\quad
\frac{3|K_{\rm APP}|\kappa}{1-3\kappa}<\frac{\eta_{\rm APP}}2,
\label{eq:appendix-small}\\
(5L+4)\kappa+\theta<\eta_0/4,
\qquad (1-\theta)(1-5L\kappa)A>h_0.
\label{eq:appendix-entropy-budget}
\end{gather}
When $L_+\ne\varnothing$, we also require
$3\kappa<\frac12\min_{l\in L_+}\delta_l^*$.
If $A=0$, then $h_0<0$, and the last inequality is automatic.

For color $l$, define
\[
\T_{l}(S) := \{ Jc ; Jc \text{ is of color } l\}.
\]
and $w_l(S) := \frac{1}{S} \sum\limits_{Jc \text{ of color } l}|J|$.
By Lemma \ref{lem:balanced-coloring}, we have
\[
\big|w_l(S)- \frac{1}{L}\big| < 5\kappa 
\]
and the uncolored remainder has proportion less than $4\kappa$.
Also we take the block tail
so far out that all generic-label conclusions for the finitely many
measures hold in $\B(\mu_l,b)$. By Lemma~\ref{eslemma1}, for a good subtile $J$ of color
$l\in L_+$, we can choose a label set $\Gamma_J^{(l)}$ with
\[
|\Gamma_J^{(l)}|\geq e^{a_l|J|},
\]
which is $(\delta_l^*/2,J,\varepsilon_l^*)$-separated and whose empirical
measures lie in $\B(\mu_l,2b)$.
For $l\notin L_+$, let $\Gamma_J^{(l)} =\{x_l\}$ and by Lemma~\ref{measures2}, we also have 
$
\mathcal{E}_{J}(x_l) \in \mathcal{B}(\mu_l, 2b).
$

Fix the coloring and the
label sets once for each shape, and translate this data to every tile.

Index the tiles by $Sd\in\mathcal T$ and set
\[
\Omega=\prod_{Sd\in\mathcal T}\prod_{i=1}^t
\prod_{c\in C_{S,i}}\Gamma_{S,i,c}.
\]
For $\omega = (\omega_{S,d,i,c})\in\Omega$, $\omega_{S,d,i,c} \in \Gamma^{l}_{J_{S,i,c}}$ if $J_{S,i,c} c\in \T_j(S)$. Apply $\APP$ on the pairwise disjoint blocks
$J_{S,i,c}cd$ to the points $d^{-1}c^{-1}\omega_{S,d,i,c}$.
Right invariance \eqref{eq:right-boundary} makes these blocks admissible.
Using the approximate product property, the set
\[
Z^\#(\omega)=\left\{z:
\begin{array}{l}
	|\{u\in J_{S,i,c}:\rho(ucdz,u\omega_{S,d,i,c})>\varepsilon\}|\\
	\hfill{}\leq\delta|J_{S,i,c}|\quad\text{for every }S,d,i,c
\end{array}\right\}
\]
is nonempty. The relation $R=\{(z,\omega):z\in Z^\#(\omega)\}$
is closed in the compact space $X\times\Omega$, since each block
condition is a finite union of intersections of closed conditions.
Consequently $Z^\#=\pi_X(R)$ is nonempty and compact. Put
\[
Y= \bigcup_{g\in G} g Z^\# = GZ^\#,\text{ and } \Lambda=\overline Y.
\]
Then $\Lambda$ is a nonempty, compact and $G$-invariant subset of $X$.
On each good subtile of color $l$, the empirical measure of a tracing
point is within
$e_0:=2b+\Delta+\delta<\eta_0/4$ of $\mu_l$.
We spell out the error over a complete tile, since the coloring is by
area and its error depends on $L$.

Fix a tile $Sd$. Its color weights satisfy \eqref{eq:color-weights},
and let $r_S=1-\sum_{i=1}^{L}w_i(S)<4\kappa$.

Recall that for $\lambda,\lambda' \in \M(X)$, 
\[
D(\lambda,\lambda')
=
\sum_{j=1}^{\infty}2^{-j}
\left|
\int\varphi_j\,d\lambda-\int\varphi_j\,d\lambda'
\right|,
\qquad 0\leq\varphi_j\leq1.
\]
For each test function \(\varphi_j\), we compare its average over \(Sd\) with its integral against \(\mu\). There are three contributions to the error. The first is the tracing error on the prescribed orbit pieces. The second comes from replacing the actual color weights \(w_l(S)\) by the target weights \(1/L\). Since the averages of \(\varphi_j\) associated with each color belong to \([0,1]\), this contribution is at most
\[
\sum_{i=1}^{L}\left|w_i(S)-\frac1L\right|.
\]
The third contribution comes from the remainder of the tile and is at most its relative size \(r_S\), again because \(0\leq\varphi_j\leq1\).

Writing \(\tau_j\) for the tracing-error contribution for \(\varphi_j\), we therefore obtain
\[
\left|
\int\varphi_j\,d\mathcal E(z,Sd)
-
\int\varphi_j\,d\mu
\right|
\leq
\tau_j+
\sum_{i=1}^{L}\left|w_i(S)-\frac1L\right|+r_S.
\]
The preceding tracing estimate gives
\(\sum_{j=1}^{\infty}2^{-j}\tau_j\leq e_0\).
By the choice of metric $D$, we conclude that
\[
\begin{aligned}
	D\bigl(\mathcal E(z,Sd),\mu\bigr)
	&\leq
	e_0+\sum_{i=1}^{L}\left|w_i(S)-\frac1L\right|+r_S\\
	&<e_0+(5L+4)\kappa.
\end{aligned}
\]
This bound holds uniformly over all complete tiles.

We next pass from individual tiles to \(K_mg\), where $g\in G$. Let $\Lambda_{K_m g} = \{Sd\in \T : Sd \subset Kmg\}$.  Write $\Lambda_{K_m g}  = \{T_1,\ldots,T_N\}$
 and put
\[
U_{m,g}=\bigcup_{\ell=1}^{N}T_\ell,
\qquad
R_{m,g}=K_mg\setminus U_{m,g}.
\]
For all sufficiently large \(m\), uniformly in \(g\), the complete-tile coverage estimate implies
\[
\frac{|R_{m,g}|}{|K_m|}<\theta.
\]
Since the tiles are disjoint, the empirical measure on \(K_mg\) decomposes as
\[
\mathcal E(z,K_mg)
=
\sum_{\ell=1}^{N}
\frac{|T_\ell|}{|K_m|}
\,\mathcal E(z,T_\ell)
+
\frac{|R_{m,g}|}{|K_m|}\,\mathcal E(z,R_{m,g}).
\]

Moreover, \(D(\lambda,\lambda')\leq1\) for any two probability measures, because every \(\varphi_j\) takes values in \([0,1]\). Applying these observations to the preceding decomposition yields
\[
\begin{aligned}
	D\bigl(\mathcal E(z,K_mg),\mu\bigr)
	&\leq
	\sum_{\ell=1}^{N}\frac{|T_\ell|}{|K_m|}
	D\bigl(\mathcal E(z,T_\ell),\mu\bigr)
	+\frac{|R_{m,g}|}{|K_m|}\\
	&\leq
	(1-\frac{|R_{m,g}|}{|K_m|})\bigl(e_0+(5L+4)\kappa\bigr)
	+\frac{|R_{m,g}|}{|K_m|}\\
	&< e_0+(5L+4)\kappa+\theta
	<\frac{\eta_0}{2}.
\end{aligned}
\]
Here the last inequality follows from the choice of parameters. In particular, the estimate holds for all sufficiently large \(m\), with the threshold independent of \(g\).

By Lemma~\ref{lem:complete-local}, applied to $Z^\#$, this uniform
estimate implies
\begin{equation}\label{eq:appendix-measure-localization}
D(\nu,\mu)\leq\eta_0/2<\eta_0
\quad\text{for every }\nu\in\M(\Lambda,G).
\end{equation}

If $h_0<0$, any ergodic invariant measure on the nonempty compact system
$\Lambda$ already has entropy greater than $h_0$, so localization completes
the proof in this case. Suppose now that $h_0\geq0$. Then $A>0$ and
$L_+\ne\varnothing$. Vary the labels independently on good subtiles in
complete tiles of $K_m$. Distinct choices differ on a color in $L_+$,
because all other label sets are singletons. The same separation argument
as in Step 4 of Proposition~\ref{mainprop1} gives a common separation
scale, for instance $\frac12\min_{i\in L_+}\varepsilon_i^*$.
The logarithm of the number of labels on a single complete tile is at least
\[
\begin{split}
\sum_i a_i w_i(S)|S|
&\geq\left(\frac1L-5\kappa\right)\sum_i a_i|S|\\
&=(1-5L\kappa)A|S|.
\end{split}
\]
Consequently,
\[
h(\Lambda,G)\geq(1-\theta)(1-5L\kappa)A>h_0.
\]
The variational principle and ergodic decomposition yield
$\nu\in\M_e(\Lambda,G)$ with $h_\nu(X,G)>h_0$.
Together with \eqref{eq:appendix-measure-localization}, this proves
entropy-denseness.
\end{proof}

\noindent{\bfseries Acknowledgement:} We would like to thank Professor Hanfeng Li for his constructive suggestions. Ren is supported by the Fundamental Research Funds for the Central Universities Project No.2025CDJZKPT-09 and No.2026CDJZKPT-09. Zhang is supported by the Natural Science Foundation of Chongqing, China, with grant No.\ CSTB2025NSCQ-GPX0875.
\bibliographystyle{amsplain-initials}
  \bibliography{reference_AEE_revised}

\end{document}